\newcommand{\rA}{\mathrm{ A}}
\newcommand{\rH}{\mathrm{ H}}
\newcommand{\rK}{\mathrm{K}}
\newcommand{\rV}{\mathrm{ V}}
\newcommand{\embed}{\hookrightarrow }
\newcommand{\lb}{\langle}
\newcommand{\rb}{\rangle}
\newcommand\cyr{%
\renewcommand\rmdefault{wncyr}%
\renewcommand\sfdefault{wncyss}%
\renewcommand\encodingdefault{OT2}%
\normalfont \selectfont} \DeclareTextFontCommand{\textcyr}{\cyr}
\numberwithin{equation}{section}
\newtheorem{theorem}{Theorem}[section]
\newtheorem{assumption}[theorem]{Assumption}
 \newtheorem{remark}[theorem]{Remark}
\newtheorem{lemma}[theorem]{Lemma}
\newtheorem{cor}[theorem]{Corollary}
\newtheorem{prop}[theorem]{Proposition}
\newtheorem{definition}[theorem]{Definition}
\newtheorem{example}[theorem]{Example}
\newtheoremstyle{AppALem}{1}{1}
  {\itshape}{0pt}{\bfseries}{.}{ }
   {\thmname{Lemma }\thmnumber{A.{#2}}{\thmnote{}}}
   \theoremstyle{AppALem}\newtheorem{lemmaA}{Lemma}
\newtheoremstyle{AppBLem}{1}{1}
  {\itshape}{0pt}{\bfseries}{.}{ }
   {\thmname{Lemma }\thmnumber{B.{#2}}{\thmnote{}}}
   \theoremstyle{AppBLem}\newtheorem{lemmaB}{Lemma}
\newtheoremstyle{AppBCor}{1}{1}
  {\itshape}{0pt}{\bfseries}{.}{ }
   {\thmname{Corollary }\thmnumber{B.{#2}}{\thmnote{}}}
   \theoremstyle{AppBCor}\newtheorem{corB}[lemmaB]{Lemma}
\newtheoremstyle{AppCTh}{1}{1}
  {\itshape}{0pt}{\bfseries}{.}{ }
   {\thmname{Theorem }\thmnumber{C.{#2}}{\thmnote{}}}
   \theoremstyle{AppCTh}\newtheorem{theoremC}{Theorem}
\newtheoremstyle{AppCProp}{1}{1}
  {\itshape}{0pt}{\bfseries}{.}{ }
   {\thmname{Proposition }\thmnumber{C.{#2}}{\thmnote{}}}
   \theoremstyle{AppCProp}\newtheorem{propC}[theoremC]{Proposition}
\newtheoremstyle{AppCCounterexample}{1}{1}
  {\itshape}{0pt}{\bfseries}{.}{ }
   {\thmname{Counterexample }\thmnumber{C.{#2}}{\thmnote{}}}
   \theoremstyle{AppCCounterexample}\newtheorem{counterexampleC}[theoremC]{Counterexample}
\newcommand\rv{ \mathrm{v}}
\newcommand\rw{ \mathrm{w}}
\newcommand{\nat}{\mathbb{N}}
\newcommand{\rzecz}{\mathbb{R}}
\newcommand{\eps}{\varepsilon }
\newcommand{\vp}{\varphi }
\newcommand{\rd}{{\rzecz }^{d}}
\newcommand{\tr}{\mbox{\rm Tr}}
\newcommand{\diver}{\mbox{\rm div}\,}
\newcommand{\acal}{\mathcal{A}}
\newcommand{\ccal}{\mathcal{C}}
\newcommand{\fcal}{\mathcal{F}}
\newcommand{\lcal}{\mathcal{L}}
\newcommand{\ocal}{\mathcal{O}}
\newcommand{\vcal}{\mathcal{V}}
\newcommand{\xcal}{\mathcal{X}}
\newcommand{\zcal}{\mathcal{Z}}
\newcommand{\fmath}{\mathbb{F}}
\newcommand{\ball}{\mathbb{B}}
\newcommand{\norm}[3]{\vert  #1 {\vert }_{#2}^{#3}}
\newcommand{\Norm}[3]{\Vert  #1 {\Vert }_{#2}^{#3}}
\newcommand{\ilsk}[3]{{\bigl( #1 , #2 \bigr)}_{#3}}
\newcommand{\dual}[3]{{\lb #1 , #2 \rb}_{#3}}
\newcommand{\Dual}[3]{{\Bigl< #1  , #2 \Bigr>}_{#3}}
\newcommand{\dirilsk}[3]{{\bigl( \! \bigl( #1 , #2 \bigr) \! \bigr)}_{#3}}
\newcommand{\ddual}[4]{{}_{#1}\lb #2 ,#3 {\rb }_{#4}}
\newcommand{\p}{\mathbb{P}}
\newcommand{\ft }{{\fcal }_{t}}
\newcommand{\tOmega}{\tilde{\Omega}}
\newcommand{\tfcal}{\tilde{\fcal}}
\newcommand{\tp}{\tilde{\mathbb{P}}}
\newcommand{\te}{\tilde{\mathbb{E}}}
\newcommand{\Xn}{{X}_{n}}
\newcommand{\Pn}{{P}_{n}}
\newcommand{\un}{{u}_{n}}
\newcommand{\Jn}[1]{{J}^{n}_{#1}}
\newcommand{\unk}{{u}_{{n}_{k}}}
\newcommand{\taun}{{\tau}_{n}}
\newcommand{\lhs}{{\mathcal{T}_2}}
\newcommand{\Bn}{{B}_{n}}
\newcommand{\tW}{\tilde{W}}
\newcommand{\bi}{{b}_{i}}
\newcommand{\bij}{{b}_{i}^{j}}
\newcommand{\bik}{{b}_{i}^{k}}
\newcommand{\ci}{{c}_{i}}
\newcommand{\betai}{{\beta}_{i}}
\newcommand{\tun}{{\tilde{u}}_{n}}
\newcommand{\tunk}{{\tilde{u}}_{{n}_{k}}}
\newcommand{\tu}{\tilde{u}}
\newcommand{\e}{\mathbb{E}}
\begin{document}

\begin{frontmatter}
\title{Invariant measure for  the stochastic Navier-Stokes equations
in  unbounded 2D domains}

\author[ZB]{Zdzis\l aw Brze\'{z}niak}
\ead{zdzislaw.brzezniak@york.ac.uk}
\author[EM]{El\.zbieta Motyl}
\ead{emotyl@math.uni.lodz.pl}
\author[MO]{ and Martin Ondrejat}
\ead{ondrejat@utia.cas.cz}
\address[ZB]{Department of Mathematics, University of York, Heslington, York,
YO105DD, United Kingdom}
\address[EM]{Faculty of Mathematics and Computer Science, University of \L\'{o}d\'{z}, ul. Banacha 22,
91-238 \L \'{o}d\'{z}, Poland}
\address[MO]{The Institute of Information Theory and Automation of the Czech Academy of Sciences, Pod Vod\'arenskou v\v e\v z\'{\i} 4, CZ-182 08, Praha 8, Czech Republic}
\tnotetext[t1]{The research of the first and second  named authours was supported partially supported by the Leverhulme  grant RPG-2012-514.
The research of the third named authour was supported by the GA\v CR grant 15-08819S.}

\begin{abstract}
Building upon a recent work by two of the authours and J. Seidler on bw-Feller property  for stochastic nonlinear beam and wave equations,
 we prove the existence of an invariant measure to  stochastic 2-D Navier-Stokes (with multiplicative noise) equations in unbounded domains.  This
 answers an open question left after the first authour and Y. Li proved a corresponding result in the case of an additive noise.
\end{abstract}

\begin{keyword}
invariant measure \sep bw-Feller semigroup \sep stochastic Navier-Stokes equations\\
AMS 2000 Mathematics Subject Classification:  60H15 \sep 35Q30 \sep 37L40 (primary), and 76M35 \sep 60J25 (secondary)
\end{keyword}

\end{frontmatter}

\section{Introduction} \label{sec-Introduction}

A classical method of proving the existence of an invariant measure for a Markov proceess is  the celebrated Krylov-Bogoliubov method. Originally it was used for Markov processes with values in locally compact  state spaces, e. g.  finite dimensional Euclidean spaces, see e.g. \cite{Krylov+Bogoliubov_1993} and \cite{Oxtoby+Ulam_1939}.  In the recent years  it has been successfully generalised to  Markov processes with non-locally compact state spaces, e.g.  infinite  dimensional Hilbert and Banach spaces, see for instance the books by Da Prato and Zabczyk \cite{DaPrato+Zabczyk_1992,DaPrato+Zabczyk_1996} and a fundamental paper by Flandoli \cite{Flandoli_1994}  for the case of $2$ dimensional Navier-Stokes equations with additive noise. One should also mention here a somehow reverse problem, found for instance in the stochastic quantisation approach of Parisi and Wu \cite{Paris+Wu_1981}, of constructing a Markov process with certain properties given an 'a priori invariant measure'. In the context of Stochastic Partial Differential Equations, this approach has been successfully implemented by Da Prato and Debussche for $2$ dimensional Navier-Stokes equations with periodic boundary conditions driven by space time white noise  in \cite{DaPrato+Debussche_2002} and for the 2-D stochastic quantization equation in \cite{DaPrato+Debussche_2003}.

 The latter method is related to the approach by Dirichlet forms as for instance in \cite{Albeverio+Rockner_1988}.
In the field of deterministic dynamical systems the so called Avez method, see \cite{Avez_1968}  and \cite{Lasota+Pian_1977}, is also popular.
It seems that the first of these methods when used in order to prove the existence of an invariant measure for Markov processes generated by SPDEs one requires the existence of an auxiliary  set which is compactly embedded into the state space and in which the Markov process eventually lives. Thus, it has so far been restricted to SPDEs of parabolic type (giving necessary conditions with smoothing effect) and in bounded domains (providing the needed compactness via the Rellich Theorem).

On the other hand, as a byproduct of results obtained by  Yuhong Li and the 1st named authour in \cite{Brzezniak+Li_2006},  about the existence of a compact absorbing set for   stochastic  $2$ dimensional Navier-Stokes equations with additive noise in a certain class of unbounded domains, there exists an invariant measure for the Markov process generated by such equations.  This, to the best of the authours knowledge, provides the first example of a nontrivial SPDEs  without the previously required compactness assumption possessing an invariant measure. A posteriori, one can see that behind the proof is the continuity of the corresponding solution flow with respect to the \textbf{weak topologies}, see Example \ref{example-Brzezniak+Li_2006}.

It is has been discovered in \cite[Proposition 3.1]{Maslowski+Seidler_1999} that  a
$bw$-Feller semigroup has an invariant probability measure provided  the set
 \begin{equation}\label{tag01}
\left\{\frac1{T_n}\int^{T_n}_0 P^{\ast }_{s}\nu\,ds;\; n\ge 1\right\}
\end{equation}
 is tight on $(\rH,bw)$.  However, it is far from  straightforward to identify stochastic PDEs for which the associated transition semigroups are $bw$-Feller. This has been recently done for SPDEs of hyperbolic type (i.e. second order in time)  such as beam and nonlinear wave equations in \cite{Brz+Ondr+Seidler_2015}. The aim of this work is to show that the general approach proposed in that paper is also applicable to stochastic Navier-Stokes equations in unbounded domains. In the case of bounded domains, the first such a result has been obtained by Flandoli in the celebrated paper \cite{Flandoli_1994}. A similarity between the equations studied in \cite{Brz+Ondr+Seidler_2015} and the current paper is that the linear generator has no compact resolvent. However, in the current situation, the generator is sectorial contrary to the former case. However, the smoothing of the semigroup is rather used to counterweight the non-smoothness of the nonlinearity.

 On the other hand, in \cite{Maslowski+Seidler_1999} Maslowski and Seidler  proposed to use the of weak topologies to the proof of the existence of invariant measures but the applications of the proposed theory had limited scope.

 These two papers, i.e. \cite{Maslowski+Seidler_1999} and  \cite{Brzezniak+Li_2006} have inspired us to investigate this matter further.

Moreover,   while working on the existence of solutions to geometric wave equations it has become apparent to us that the methods of using very fine techniques  in order to overcome the difficulty arising from having only weak a'priori estimates should also allow one to prove the sequentially weak Feller property required by the Maslowski and Seidler approach.  This made it possible to prove the existence of invariant measure for SPDES of hyperbolic type as for instance wave and beam, see the recent paper \cite{Brz+Ondr+Seidler_2015} by the Seidler and the 1st and 3rd authours.

 The aim of the current work is to show that the approach worked out in \cite{Brz+Ondr+Seidler_2015} combined with the method of proving the existence of Stochastic Navier-Stokes Equations in general domains  developed recently by 1st and 2nd authours, see for instance \cite{Brz+Motyl_2013}, indeed can lead to a proof of  the existence of an invariant measure for stochastic $2$ dimensional  Navier-Stokes equations with multiplicative noise (and additive as well) in unbounded domains and thus generalizing the previously mentioned  result \cite{Brzezniak+Li_2006}.

Let us stress that the general result proved in Sections 5-10 of \cite{Brz+Ondr+Seidler_2015} does no apply directly
to Stochastic NSEs.
Instead we propose  a scheme which is general enough that it should be applicable to other equations. Let us describe it in more detail.
In a domain $\mathcal{O} \subset \mathbb{R}^2$ satisfying the Poincar\'e inequality we consider the following stochastic Navier-Stokes equations in the functional form
\begin{equation} \label{eqnintro-E:NS}
\begin{cases}
& du(t)+ A u(t) \, dt+B\bigl( u(t),u(t) \bigr)\, dt = f \, dt+G\bigl( u(t)\bigr) \, dW(t),
 \qquad t \in [0,T] , \\
& u(0) = {u}_{0} ,
\end{cases}
\end{equation}
where $A$ is the Stokes operator, $u_0\in \rH$, $f\in \rV^\prime$ and we use the standard notation, see the parts of the paper around equation \eqref{E:NS}. In particular, $W=\big(W(t)\big)_{t\geq 0}$ is a cylindrical Wiener process on a separable Hilbert space $\rK$ defined on a ceratin probability space and the nonlinear diffusion coefficient $G$ satisfy some natural assumptions. It is known (but we provide an independent proof of this fact) that the above problem has a unique global solution $u(t;u_0)$, $t\geq 0$. The corresponding semigroup $(P_t)_{t\geq 0}$ is Markov, see  Proposition \ref{prop-Markov}. This semigroup is defined by the formula, see \eqref{eqn:semigroup},
\begin{equation}
\label{eqnintro-semigroup}
 (P_t \varphi)(u_0) = \mathbb{E}[\varphi (u(t;u_0))], \quad t\geq 0,\;\;\; u_0 \in \rH ,
\end{equation}
for any  bounded  Borel  function $\varphi \in  {\mathcal{B}}_{b}(\rH) $. Then, see Proposition \ref{prop-Feller_bw}, we prove that this semigroup
is $bw$-Feller, i.e. for every  $t>0$ and every    bounded sequentially weakly continuous  function  $\phi:\rH\to\mathbb{R}$,    the function
$P_t\phi:\rH\to\mathbb{R}$ is also  bounded sequentially weakly continuous.

The idea of the proof of the last result can be traced to recent papers by  all three of us in which we proved the existence of weak martingale solutions
 to the stochastic   geometric wave and Navier-Stokes and equations  developed respectively in
 \cite{Brz+Ondrejat_2011a,Brz+Ondrejat_2011} and \cite{Brz+Motyl_2013}.

 Finally, our main result, i.e. Theorem \ref{thm-main}  about the existence of an invariant measure for the semigroup $(P_t)_{t \geq 0}$,  follows provided some natural assumptions, as inequality \eqref{E:G} holds with ${\lambda }_{0}=0$, i.e. for some\footnote{Throughout  the whole paper we use the symbol $\lhs$  to denote the space of Hilbert-Schmidt operators
between corresponding Hilbert spaces.} $\rho \geq 0$,
\begin{equation} \label{eqninto-G}
\norm{G(u )}{\lhs (\rK,\rH)}{2}
  \le (2-\eta ) \Norm{u}{}{2}+\rho , \qquad u \in \rV ,
\end{equation}
guaranteeing the uniform boundedness in probability,  are satisfied, see Corollary \ref{cor-Krylov_Bogoliubov_cond}.

In proving  Proposition \ref{prop-Feller_bw}  the  continuity/stability result contained in Theorem \ref{thm-weak continuous dependence-existence_2D} plays an essential r\^ole.

We will present now the earlier promised example based on the paper \cite{Brzezniak+Li_2006}.

\begin{example}\label{example-Brzezniak+Li_2006}
If $\vp=(\vp_t)_{t\geq 0}$ is a deterministic dynamical system on a Hilbert space $\rH$, then one can define the corresponding Markov semigroup by
\begin{equation}\label{eqn-semigroup-deterministic}
[P_t(f)](x):= f(\vp_t(x)),\;\; t\geq 0, \;\; x\in \rH.
\end{equation}
Suppose that the semiflow  is sequentially weakly continuous in the following sense.
\begin{equation}\label{eqn-weak cont-deterministic}
 \mbox{If }t_n \to t \in \mathbb{R}_+, \;\; x_n \to x \mbox{ weakly in } \rH \mbox{ then } \vp_{t_n}(x_n) \to \vp_t (x) \mbox{ weakly in } \rH.
\end{equation}
Note that the above condition is satisfied for the deterministic 2-d Navier-Stokes equations, see \cite{Rosa_1998} and also
\cite[Lemma 7.2]{Brzezniak+Li_2006}.\\
Then, the assertion of Theorem 9.4 in \cite{Brz+Ondr+Seidler_2015} holds.
 Indeed, let us choose and fix   a bounded sequentially weakly continuous function $f:\rH\to\mathbb{R}$,
a sequence $(t_n)\to t$ and  a sequence $(x_n)$  such that   $x_n\to x$ weakly in $\rH$. Then by assumption \eqref{eqn-weak cont-deterministic}
$\vp_{t_n}(x_n) \to \vp_t (x)$ weakly in  $\rH$ and since $f$ is sequentially weakly continuous we infer that
\begin{equation*}
 [P_{t_n}(f)](x_n)= f (\vp_{t_n}(x_n))\to f(\vp_t (x))= P_t f(x).
 \end{equation*}
The condition guaranteeing the existence of an invariant measure, see \cite[Theorem 10.1]{Brz+Ondr+Seidler_2015},
 now reads as follows. There exists $x\in \rH$ such     for every $\varepsilon>0$, there exists $R>0$ such that
\begin{equation}\label{eqnintro-uniboup}
\limsup_{t\to\infty}\frac 1t\int_0^t 1_{\vert \vp_s(x)\vert_\rH\ge R}\,ds\le\varepsilon
\end{equation}
which is obviously satisfied provided the dynamical system $\vp=(\vp_t)_{t\geq 0}$ is bounded at infinity, i.e.
there exists $x\in \rH$ and  $R>0$ such that $\vert \vp_s(x)\vert_\rH\leq R $ for all $s\geq 0$. It is well known that this condition holds for
 the deterministic 2-d Navier-Stokes equations in a Poincar\'e domain (as well as for the damped Navier-Stokes Equations in the whole space $\mathbb{R}^2$. Thus we conclude, that in those cases, there exists an invariant measure. Of course, these are known results, the purpose of this Example is only to elucidate our paper by showing that it is also applicable to these cases.

Let us point out that \cite[Lemma 7.2]{Brzezniak+Li_2006} played an important r\^ole in that paper.\\
 We  believe that the result described in this Example holds also for the Random dynamical system from \cite{Brzezniak+Li_2006}. In this way, we will get an alternative proof of the result existence of an invariant measure proved in  that paper.

The weak continuity  property \eqref{eqn-weak cont-deterministic}  has also been investigated \cite{Ball_1997,Rosa_1998,Brzezniak+Li_2006,Daners_2005}. In the first three of these references the weak to weak continuity is an important tool in proving the existence of an attractor for deterministic 2D  Navier-Stokes Equations  in unbounded domains, where, as we pointed out earlier,  the compactness of the embedding from the Sobolev space $H^1$ to $L^2$ does not hold. A similar type of  continuity (weak to strong), is encountered in the proof of the large deviation principle for SPDES, see for instance \cite[Lemma 6.3]{Brz+Goldys+Jegaraj_2016} for  the case of Stochastic Landau-Lifshitz Equations. It might be interesting to understand in  the relationship between these two types of continuity.

\end{example}

\bigskip
Let us finish the Introduction with a brief description of   the content of the paper. Section \ref{sec-preliminaries NSES}
is devoted to recalling some basic notation and information.
In section \ref{sec-SNSEs} we recall the fundamental facts about Navier-Stokes Equations.  This section is based on a similar presentation in \cite{Brz+Motyl_2013}, however, in the present paper, we make some modifications.
In section \ref{sec-Cont-dependence} we formulate and prove the convergence result
for a sequence of martingale solutions of the Stochastic NSEs, see for instance Theorems \ref{thm-continuous dependence-tightness} and \ref{thm-continuous dependence-existence}.
The results of section \ref{sec-Cont-dependence} hold both in 2 and 3-dimensional possibly unbounded domains. Let us stress this again, these two results are for sequence of martingale solutions of the Stochastic NSEs. In the case when these are replaced by strong solutions of the corresponding Galerkin approximations, the corresponding results have been proved in \cite{Brz+Motyl_2013}, see also Theorem
\ref{T:existence_NS} in the present paper.
In section \ref{sec-2D-SNSEs} we recall the main results from \cite{Brz+Motyl_2013} in the special case of $2$-dimensional domains. Besides,  we prove Theorem \ref{thm-weak continuous dependence-existence_2D}, needed in the main section, and being the counterpart of Theorem \ref{thm-continuous dependence-existence} for the 2-dimensional case. Theorems \ref{thm-continuous dependence-tightness}, \ref{thm-continuous dependence-existence} and \ref{thm-weak continuous dependence-existence_2D} generalise
\cite[Lemmata 7.1 and 7.2]{Brzezniak+Li_2006}.
In section \ref{sec-invariant} we state and proof the main result of this paper, i.e. the existence of invariant measures for Stochastic Navier-Stokes equations in  2-dimensional Poincar\'{e}, possibly unbounded, domains with multiplicative noise.

\subsection*{Acknowledgements}
The authours would like to thank  an
anonymous referee for careful reading of the manuscript and useful remarks.

\section{Preliminaries}  \label{sec-preliminaries NSES}

The following introductory section is for the reader convenience and hence relies heavily on \cite{Brz+Motyl_2013}, arXiv:1208.3386.

\bigskip \noindent
Let $\ocal \subset {\rzecz }^{d} $, where $d=2,3$, be an open connected subset with smooth boundary $\partial \ocal $.
For $p\in [1,\infty )$ by  ${L}^{p}(\ocal , \rd )$ we denote the Banach space of (equivalence classes) of Lebesgue measurable $\rd $-valued $p$-th power integrable functions on the set $\ocal $. The norm in ${L}^{p}(\ocal , \rd )$ is given by
\[
      \norm{u}{{L}^{p}}{} := \biggl( \int_{\ocal } |u(x){|}^{p} \, dx {\biggr) }^{\frac{1}{p}} , \qquad u \in {L}^{p}(\ocal , \rd ).
\]
By ${L}^{\infty } (\ocal , \rd )$ we denote the Banach space of Lebesgue measurable essentially bounded $\rd $-valued functions defined on $\ocal $ with  the norm defined by
\[
     \norm{u}{{L}^{\infty }}{}:= \mbox{\rm esssup} \,\{  |u(x)| , \, \, x \in \ocal  \}  , \qquad u \in {L}^{\infty } (\ocal , \rd ).
\]
If $p=2$, then ${L}^{2}(\ocal , \rd )$ is a Hilbert space with the inner product given by
\[
  \ilsk{u}{\rv}{{L}^{2}} := \int_{\ocal } u(x) \cdot \rv (x) \, dx , \qquad u,\rv \in {L}^{2}(\ocal ,\rd ).
\]
By  ${H}^{1}(\ocal ,\rd )={H}^{1,2}(\ocal ,\rd )$ we will denote  the Sobolev space consisting of all $u \in {L}^{2}(\ocal ,\rd )$ for which
there exist weak derivatives $D_iu\in {L}^{2}(\ocal ,\rd )$, $i=1,\cdots,d$.
It is a Hilbert space with the inner product given by
\[
   \ilsk{u}{\rv}{{H}^{1}} := \ilsk{u}{\rv}{{L}^{2}} + \ilsk{\nabla u}{\nabla \rv}{{L}^{2}} , \qquad u,\rv \in
   {H}^{1}(\ocal ,\rd ),
\]
where $\ilsk{\nabla u}{\nabla \rv}{{L}^{2}}:= \sum_{i=1}^{d}\int_{\ocal }D_iu(x) \cdot  D_i\rv(x) \, dx $.
Let ${\ccal }^{\infty }_{c} (\ocal , \rd )$ denote the space of all $\rd $-valued functions of class ${\ccal }^{\infty }$ with compact supports contained in $\ocal $. We will use the following classical spaces
\begin{eqnarray*}
 &  &\vcal := \{ u \in {\ccal }^{\infty }_{c} (\ocal , \rd ) : \, \, \diver u= 0 \}  ,\\
 & & \rH := \mbox{the closure of $\vcal $ in ${L}^{2}(\ocal , \rd )$} , \\
 & & \rV := \mbox{the closure of $\vcal $ in ${H}^{1}(\ocal , \rd )$} .
\end{eqnarray*}
In the space $\rH$ we consider the inner product and the norm inherited from ${L}^{2}(\ocal , \rd )$ and
denote them by $\ilsk{\cdot }{\cdot }{\rH}$ and $|\cdot {|}_{\rH}$, respectively, i.e.
\[
\ilsk{u}{\rv}{\rH}:= \ilsk{u}{\rv}{{L}^{2}} , \qquad
 | u {|}_{\rH} := \norm{u}{{L}^{2}(\ocal)}{} , \qquad u, \rv \in \rH.
\]
In the space $\rV $ we consider the inner product inherited from ${H}^{1}(\ocal , \rd )$, i.e.
\begin{equation}  \label{E:V_il_sk}
 \ilsk{u}{\rv}{\rV } := \ilsk{u}{\rv}{{L}^{2}} + \dirilsk{u}{\rv}{},
\end{equation}
where
\begin{equation}  \label{E:il_sk_Dir}
 \dirilsk{u}{\rv}{}:=\ilsk{\nabla u}{\nabla \rv}{{L}^{2}} , \qquad u,\rv \in  \rV .
\end{equation}
Note that  the norm in $\rV $ satisfies
\begin{equation} \label{E:norm_V}
  \norm{u}{\rV}{2} :=  \norm{u}{}{2}+\norm{\nabla u}{{L}^{2}}{2},  \qquad \rv \in  \rV .
\end{equation}
We  will often use the notation $\Norm{\cdot}{}{}$ for the seminorm
\[
\Norm{u}{}{2}:=\dirilsk{u}{u}{}=\ilsk{\nabla u}{\nabla u}{{L}^{2}} , \qquad u \in  \rV .
\]

\bigskip  \noindent
A domain $\ocal $
satisfying the Poincar\'{e} inequality, i.e.   there exists a constant $C>0$
 such that
\begin{equation}\label{cond-Poincare}
C\int_{\mathcal{O}}\varphi^{2}\,d\xi\leq\int_{\mathcal{O}}|\nabla
\varphi|^{2}\,d\xi\quad\hbox{\rm for all $\varphi\in H^1_0({\mathcal O})$}
\end{equation}
will be called a Poincar\'{e} domain.
It is well known that, in the case when $\mathcal{O}$ is a  Poincar\'e domain,  the inner product in the space $\rV $
inherited from ${H}^{1}(\ocal , \rd )$, i.e. $ \ilsk{u}{\rv}{\rV } := \ilsk{u}{\rv}{{L}^{2}} + \dirilsk{u}{\rv}{}$ is equivalent to the following one:
\begin{equation}  \label{E:V_il_sk_Poincare}
  \ilsk{u}{\rv}{\mathcal{P}} :=  \dirilsk{u}{\rv}{} , \qquad u,\rv \in  \rV .
\end{equation}
\it In the sequel, if $\ocal $ is a  Poincar\'e domain, then in the space $\rV $ we consider the inner product
$\dirilsk{\cdot }{\cdot }{}$ given by \eqref{E:il_sk_Dir} and the corresponding norm $\Norm{\cdot }{}{}$. \rm

\bigskip  \noindent
Denoting by $\dual{\cdot }{\cdot }{}$ the dual pairing between $\rV $ and $\rV ^{\prime }$, i.e.
$\dual{\cdot }{\cdot }{}:=\ddual{{\rV }^\prime }{\cdot }{\cdot }{}{}_{\rV }$, by the Lax-Milgram Theorem, there exists a unique bounded linear operator $\acal: \rV  \to {\rV }^\prime $ such that
 we have the following equality
\begin{equation}  \label{E:Acal_ilsk_Dir}
   \dual{\acal u}{\rv }{} = \dirilsk{u}{\rv }{} , \qquad u,\rv \in \rV .
\end{equation}
The operator $\acal$ is closely related to the Stokes operator $\rA$ defined by
\begin{equation}
\label{eqn-Sokes}
\begin{array}{rcl}
D(\rA)&=&\{ u\in \rV: \acal u\in \rH\},\\
\rA u&=&\acal u, \mbox{ if } u\in D(\rA).
\end{array}
 \end{equation}
The Stokes operator $\rA$ is a non-negative self-adjoint operator in $\rH$. Moreover, if  $\ocal $ is a 2D or 3D Poincar\'{e} domain, see \eqref{E:Poincare-NS-ineq} below, then $\rA$ is strictly positive. We will not use the Stokes operator as in this paper we will be concerned only with the weak solutions to the stochastic Navier-Stokes equations, which in particular do not take values in the domain $D(\rA)$ of $\rA$.

\bigskip  \noindent
Let us consider the following tri-linear form
\begin{equation}  \label{E:form_b}
     b(u,w,\rv ) = \int_{\ocal  }\bigl( u \cdot \nabla w \bigr) \rv \, dx .
\end{equation}
We will recall fundamental properties of the form $b$.
By the Sobolev embedding Theorem (or Gagliardo-Nirenberg Inequality) we have, see for instance \cite[Lemmata III.3.3 and III.3.5]{Temam_2001},
\begin{eqnarray}
\label{ineq-GNI}
\norm{u}{{L}^{4}{(\ocal)}}{} &\leq &2^{1/4} \norm{u}{{L}^{2}{(\ocal)}}{1-\frac{d}4} \norm{\nabla u}{{L}^{2}{(\ocal)}}{\frac{d}4}, \;\; u\in H_0^{1,2}(\ocal), \quad \mbox{ for } d=2,3.
\end{eqnarray}
by applying  the H\"{o}lder inequality, we obtain the following estimates
\begin{eqnarray}
 |b(u,w,\rv )|= |b(u,\rv,w )| &\leq&  \norm{u}{{L}^{4}}{} \vert w\vert_{L^4} \vert \nabla \rv \vert_{L^2}
\label{E:b_estimate_L^4}
 \\
& \le & c \norm{u }{\rV }{} \Norm{w }{\rV }{} \Norm{\rv }{\rV}{} , \qquad u,w,\rv \in \rV    \label{E:b_estimate_V}
\end{eqnarray}
for some positive constant $c$. Thus the form $b$ is continuous on $\rV $, see also \cite{Temam_2001}.
Moreover, if we define a bilinear map $B$ by $B(u,w):=b(u,w, \cdot )$, then by inequality \eqref{E:b_estimate_V} we infer that $B(u,w) \in {\rV }_{}^{\prime }$ for all $u,w\in \rV $ and, by the Gagliardo-Nirenberg Inequality \eqref{ineq-GNI}) that the following inequality holds, for $d=2,3$,
\begin{eqnarray}  \nonumber
 |B(u,w) {|}_{{\rV }^{\prime }} \leq {c}_{1} \norm{u}{{L}^{4}}{} \norm{w}{{L}^{4}}{}
&\leq  &c_{2}
 \norm{u}{{L}^{2}}{1-\frac{d}4}\, \norm{ \nabla u}{{L}^{2}}{\frac{d}4}\, \norm{w}{{L}^{2}}{1-\frac{d}4} \, \norm{ \nabla w}{{L}^{2}}{\frac{d}4},
\label{E:estimate_B}
\\
   &\leq& c_3  \Norm{u }{\rV }{}\Norm{w }{\rV }{},\qquad\qquad\qquad\qquad u,w \in \rV  .
\nonumber
\end{eqnarray}
In particular, the mapping $B: \rV  \times \rV  \to {\rV }^{\prime } $ is bilinear and continuous.

\bigskip  \noindent
Let us also recall the following properties of the form $b$, see Temam \cite{Temam_2001}, Lemma II.1.3,
\begin{equation}  \label{E:antisymmetry_b}
b(u,w, \rv ) =  - b(u,\rv ,w), \qquad u,\rw,\rv \in \rV .
\end{equation}
In particular,
\begin{equation}  \label{E:wirowosc_b}
\dual{ B(u,\rv) }{\rv) }{} =b(u,\rv,\rv) =0   \qquad u,\rv \in \rV.
\end{equation}
We will need the following Fr\'{e}chet topologies.

\begin{definition}\label{def-Frechet topology}
\rm
By ${L}^{2}_{\mathrm{loc}}(\mathcal{O} , {\mathbb{R}}^{d})=\mathbb{L}^{2}_{\mathrm{loc}}$ we denote the space of all Lebesgue measurable $\rd $-valued functions $\rv$ such that $\int_{K}|\rv(x){|}^{2} \, dx < \infty $ for every compact subset $K \subset \ocal $. In this space we consider the Fr\'{e}chet topology generated by the family of seminorms
\[
   {p}_{R}:=  \Bigl( \int_{{\ocal }_{R}} |\rv(x){|}^{2} \, dx {\Bigr) }^{\frac{1}{2}} , \qquad R \in \nat ,
\]
where $({\ocal }_{R}{)}_{R \in \nat }$ is
an increasing sequence
of open bounded subsets of $\,\ocal $
with smooth boundaries and such that $\bigcup_{R\in \nat } {\ocal }_{R} =\ocal $.
\footnote{Such sequence  $({\ocal }_{R}{)}_{R \in \nat }$ always exist since it is sufficient to consider as $\ocal_R$ a smoothed out version of the set $\ocal \cap B(0,R)$, see for instance \cite{Schmidt_2014_approx} and references therein.}

By ${\rH }_{\mathrm{loc}}$  we denote    the space $\rH$  endowed with  the Fr\'{e}chet topology  inherited from the space
${L}^{2}_{\mathrm{loc}}(\mathcal{O} , {\mathbb{R}}^{d})$.
\end{definition}

Let us, for any $s>0$ define the following standard scale of
Hilbert spaces
\[
  {\rV}_{s} := \mbox{the closure of $\vcal $ in ${H}^{s}(\ocal , \rd )$} .
\]
If $s > \frac{d}{2} +1$ then by the Sobolev embedding Theorem,
\[
    {H}^{s-1}(\ocal  , \rd ) \embed  {\ccal }_{b}(\ocal , \rd )
   \embed {L}^{\infty } (\ocal , \rd ).
\]
Here ${\ccal }_{b}(\ocal , \rd )$ denotes the space of continuous and bounded $\rd $-valued functions defined on $\ocal $.
If $u,w \in \rV$ and $\rv \in {\rV}_{s}$ with $s > \frac{d}{2} +1$,  then for some constant ${c}_{} >0 $,
\begin{eqnarray*}
 |b(u,w,\rv)|  &= & |b(u,\rv,w)|
  \le   \norm{u}{{L}^{2}}{} \norm{w}{{L}^{2}}{} \norm{\nabla \rv}{{L}^{\infty }}{}
 \le  {c}_{} \norm{u}{{L}^{2}}{} \norm{w}{{L}^{2}}{} \norm{\rv}{{\rV}_{s}}{}.
\end{eqnarray*}
We have the following well know result  used in the proof of \cite[Lemma 5.4]{Brz+Motyl_2013}.
\begin{lemma}\label{lem-B-H} Assume that  $s > \frac{d}{2} +1$. Then
there exists a constant $C>0$ such that
\begin{equation}
\label{eqn-B-H}
\vert B(u,\rv)\vert_{\rV_s^\prime} \leq C \norm{u}{\rH}{}\norm{\rv}{\rH}{}, \;\; u,\rv\in \rV.
\end{equation}
Hence, in particular, there exists a unique bilinear and bounded map $\tilde{B}:\rH \times \rH\to {\rV }_{s}^{\prime }$ such that $B(u,\rv)=\tilde{B}(u,\rv)$ for all $u,\rv\in \rV$. \\
In what follows, the map $\tilde B$ will be denoted by $B$ as well.
\end{lemma}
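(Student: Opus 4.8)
The plan is to read the required bound off the estimate displayed immediately before the statement, and then to obtain the extension $\tilde B$ by a routine density argument.

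First I would fix $u,\rv\in\rV$ and a test function $\varphi\in\rV_s$. Since $s>\tfrac{d}{2}+1\ge 1$ we have $\rV_s\subset\rV$ continuously, so $B(u,\rv)\in{\rV}^{\prime}$ (already established) may be paired with $\varphi$, and the antisymmetry \eqref{E:antisymmetry_b} of the form $b$ gives
\[
\dual{B(u,\rv)}{\varphi}{}=b(u,\rv,\varphi)=-b(u,\varphi,\rv)=-\int_{\ocal}\bigl(u\cdot\nabla\varphi\bigr)\cdot\rv\,dx .
\]
Applying H\"older's inequality with exponents $(2,\infty,2)$ then yields
\[
\bigl|\dual{B(u,\rv)}{\varphi}{}\bigr|\le\norm{u}{{L}^{2}}{}\,\norm{\nabla\varphi}{{L}^{\infty}}{}\,\norm{\rv}{{L}^{2}}{}=\norm{u}{\rH}{}\,\norm{\rv}{\rH}{}\,\norm{\nabla\varphi}{{L}^{\infty}}{}.
\]

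Second I would bound $\norm{\nabla\varphi}{{L}^{\infty}}{}$ by $\norm{\varphi}{{\rV}_{s}}{}$. Since $s-1>\tfrac{d}{2}$, the Sobolev embedding $ {H}^{s-1}(\ocal,\rd)\embed{\ccal}_{b}(\ocal,\rd)\embed{L}^{\infty}(\ocal,\rd)$ recalled above, applied to each partial derivative $D_i\varphi\in{H}^{s-1}(\ocal,\rd)$, gives $\norm{\nabla\varphi}{{L}^{\infty}}{}\le C\,\norm{\varphi}{{H}^{s}}{}=C\,\norm{\varphi}{{\rV}_{s}}{}$ for a constant $C=C(s,d)$; alternatively, writing $\varphi$ as an $H^s$-limit of functions in $\vcal\subset{\ccal}_{c}^{\infty}(\ocal,\rd)$, extending those by zero to ${\rzecz}^{d}$ (which preserves the $H^s$-norm for compactly supported functions) and invoking ${H}^{s-1}({\rzecz}^{d})\embed{\ccal}_{b}({\rzecz}^{d})$ reduces everything to the classical embedding on the whole space and sidesteps any regularity assumption on $\ocal$. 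Combining this with the previous display and taking the supremum over $\varphi\in\rV_s$ with $\norm{\varphi}{{\rV}_{s}}{}\le 1$ proves \eqref{eqn-B-H}.

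Finally, for the extension: $\rV$ is dense in $\rH$, both being closures of $\vcal$, and by \eqref{eqn-B-H} the bilinear map $B:\rV\times\rV\to{\rV}_{s}^{\prime}$ is continuous for the $\rH\times\rH$ norm topology; hence it extends uniquely to a continuous bilinear map $\tilde B:\rH\times\rH\to{\rV}_{s}^{\prime}$ obeying the same bound, obtained by approximating $u,\rv\in\rH$ by sequences in $\rV$ and passing to the limit, the limit being independent of the chosen approximating sequences by bilinearity together with the estimate. I do not anticipate any serious obstacle here; the only point requiring mild care is the Sobolev embedding on a possibly unbounded, merely smooth domain $\ocal$, which is precisely why I would favour the zero-extension reduction to ${\rzecz}^{d}$.
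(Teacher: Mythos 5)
Your argument is correct and is essentially the paper's: the paper derives the same bound $|b(u,w,\rv)|=|b(u,\rv,w)|\le \norm{u}{{L}^{2}}{}\norm{w}{{L}^{2}}{}\norm{\nabla \rv}{{L}^{\infty }}{}\le c\norm{u}{{L}^{2}}{}\norm{w}{{L}^{2}}{}\norm{\rv}{{\rV}_{s}}{}$ in the display immediately preceding the lemma, using the antisymmetry \eqref{E:antisymmetry_b} and the Sobolev embedding ${H}^{s-1}\embed {L}^{\infty}$, and then treats the density extension as routine. Your extra care with the embedding on an unbounded domain (zero extension from $\vcal$ to ${\rzecz}^{d}$) is a sensible refinement of the same route, not a different one.
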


\section{Stochastic Navier-Stokes equations}   \label{sec-SNSEs}

We begin this section with listing all the main assumptions.

\begin{assumption}\label{assumption-main}
We assume that the following objects are given.
\begin{description}
\item[\textbf{(H.1)}]   A separable Hilbert space $\rK $;
\item[\textbf{(H.2)}] a measurable map $G:\rH\to\lhs (\rK ,{\rV^{\prime }})$
 that
\begin{trivlist}
\item[(i)] is of linear growth, i.e. for some $C>0$
\begin{equation} \label{E:G*}
   \Norm{G(u)}{\lhs (\rK , {\rV }^{\prime })}{2} \le C (1 + {|u|}_{\rH}^{2}) , \qquad u \in \rH .
\tag{G1}
\end{equation}

\item[(ii)] $G(\rv) \in  \lhs (\rK ,\rH) $ for $\rv\in \rV $, and the restriction map $G: \rV  \to \lhs (\rK ,\rH) $ is Lipschitz continuous, i.e. there exists a constant $L>0$ such that
\begin{equation} \label{E:G_Lipsch}
 \norm{ G({u}_{1})-G({u}_{2}) }{\lhs (\rK,\rH)}{}
 \leq {L}^{} \Norm{{u}_{1}-{u}_{2}}{\rV }{}, \;\;\; u_1,u_2\in \rV .
 \tag{G2}
\end{equation}
\item[(iii)] for some constants ${\lambda }_{0}$, $\rho $ and $\eta \in (0,2]$,
\begin{equation} \label{E:G}
\norm{G(u )}{\lhs (\rK,\rH)}{2}
  \le (2-\eta ) \Norm{u}{}{2}+{\lambda }_{0}{|u|}_{\rH}^{2}+\rho , \qquad u \in \rV ,
\tag{G3}
\end{equation}

\item[(iv)] and,  for every $\psi \in \vcal $ the function
\begin{equation} \label{E:G**}
   {\psi}^{\ast \ast } G
  :   {\rH }_{\mathrm{loc}} \ni u \mapsto \Big\{\rK \ni y\mapsto
  \ddual{\rV^\prime }{G(u)y }{\psi }{}{}_{\rV}
  \in \mathbb{R}\Big\} \in \rK^\prime \mbox{ is continuous}.
   \tag{G4}
\end{equation}
\end{trivlist}
\item[(H.3)] A real number $p$ such that
\begin{equation} \label{eqn-p_cond}
p\in  \bigl[ 2, 2+ \frac{\eta }{2-\eta } \bigr) ,
\end{equation}
where we put $\frac{\eta }{2-\eta }=\infty$ when $\eta=2$.;
\item[(H.4)]
a Borel probability measure $\mu_0$ on $H$ such that $\int_H \vert x \vert^p \mu_0(dx)< \infty$ is given.
\item[(H.5)] an linear operator
$\acal: \rV  \to {\rV }^\prime $ satisfying   equality \eqref{E:Acal_ilsk_Dir}.
\end{description}
\end{assumption}

\bigskip

Now we state definition of a martingale solution of  equation \eqref{E:NS}.
We really need to consider the infinite time interval, i.e. $[0,\infty )$, however, we need also to state some of the results on the interval $[0,T]$, where $T>0$ is fixed.
Thus, in the following definition we distinguish between the two cases of solution on a finite interval $[0,T]$ and  on $[0,\infty )$.

\begin{definition}  \rm  \label{def-sol-martingale}
Let us assume Assumption \ref{assumption-main}.
Let $T>0$ be fixed. We say that there exists a \textbf{martingale solution}  of  the following stochastic Navier-Stokes Equations (in an abstract form)
 on the interval $[0,T]$
\begin{equation} \label{E:NS}
\begin{cases}
& du(t)+\acal u(t) \, dt+B\bigl( u(t),u(t) \bigr)\, dt = f(t) \, dt+G\bigl( u(t)\bigr) \, dW(t),
 \qquad t \geq 0 , \\
& \mathcal{L}( u(0)) = {\mu }_{0},
\end{cases}
\end{equation}
iff there exist
\begin{itemize}
\item a stochastic basis $\bigl( \hat{\Omega }, \hat{\fcal }, \hat{\fmath } ,
\hat{\p }  \bigr) $ with a complete filtration $\hat{\fmath }={\{ {\hat{\fcal }}_{t} \} }_{t \in [0,T]}$,
\item a $\rK$-cylindrical Wiener process $\hat{W}=\big(\hat{W})_{t \in [0,T]}$
\item and an $\hat{\fmath }$-progressively measurable process
$u: [0,T] \times \hat{\Omega }\to \rH$ with $\hat{\p } $-a.e. paths satisfying
\begin{equation}  \label{eqn-regularity of u}
  u(\cdot , \omega ) \in \ccal \bigl( [0,T], {\rH}_{w} \bigr)
   \cap {L}^{2}(0,T;\rV )
\end{equation}
such that
\[\mbox{the law on $\rH$ of $u(0)$ is equal to  ${\mu }_{0}$} \] and,  for all $ t \in [0,T] $ and all $v \in \vcal $,
\begin{eqnarray}
 \ilsk{u(t)}{v}{\rH} +  \int_{0}^{t} \dual{\acal u(s)}{v}{} \, ds
+ \int_{0}^{t} \dual{B(u(s))}{v}{} \, ds & & \nonumber \\
  = \ilsk{u(0)}{v}{\rH} +  \int_{0}^{t} \dual{f(s)}{v}{} \, ds
 + \Dual{\int_{0}^{t} G(u(s))\, d\hat{W}(s)}{v}{},  & &\quad \mbox{ $\hat{\p }$-a.s.}  \label{E:mart_sol_int_identity}
\end{eqnarray}
 and
\begin{equation}  \label{E:mart_sol_energy-ineq}
\hat{\mathbb{E}} \Bigl[ \sup_{t \in [0,T]} \norm{u(t)}{\rH}{2} + \int_{0}^{T} \norm{\nabla u(t)}{}{2} \, dt  \Bigr] < \infty .
\end{equation}
\end{itemize}
If all the above conditions are satisfied, then the  system
\[\bigl( \hat{\Omega }, \hat{\fcal }, \hat{\fmath } ,
\hat{\p }, \hat{W},u  \bigr)\]
will be called  a  martingale  solution     to  problem  \eqref{E:NS}  on the interval $[0,T]$ with the initial distribution ${\mu }_{0}$. \rm

A system $\bigl( \hat{\Omega }, \hat{\fcal }, \hat{\fmath } ,
\hat{\p }, \hat{W},u  \bigr) $ will be called a \textbf{martingale  solution} to  problem  \eqref{E:NS}  with the initial distribution ${\mu }_{0}$  iff all the above conditions are defined with the interval $[0,T]$ being replaced by $[0,\infty)$ and the condition \eqref{eqn-regularity of u} is replaced by

\begin{equation}  \label{eqn-regularity of u_infinity}
  u(\cdot , \omega ) \in \ccal \bigl( [0,\infty ), {\rH}_{w} \bigr)
   \cap {L}_{\mathrm{loc}}^{2}([0,\infty );\rV) ,
\end{equation}
and inequality \eqref{E:mart_sol_energy-ineq} holds for every $T>0$.

Here, ${\rH}_{w}$ denotes the Hilbert space $\rH$ endowed with the weak topology and
$\mathcal{C} ([0,T],{\rH}_{w})$ and $\mathcal{C} ([0,\infty ),{\rH}_{w})$  denote the spaces of $\rH$ valued weakly continuous functions defined on $[0,T]$ and $[0,\infty )$, respectively.

\end{definition}

In the case when ${\mu }_{0}$ is equal to the law on $\rH$ of a given random variable ${u}_{0}:\Omega \to \rH$ then,
somehow incorrectly, a martingale  solution to  problem  \eqref{E:NS}  will also be called a martingale  solution    to  problem  \eqref{E:NS}
 with the initial data $u_0$. Fully correctly, it should be called a martingale  solution    to  problem  \eqref{E:NS}
  with the initial data having the same law as $u_0$.
In particular, in this case we require that the laws on $\rH$ of $u_0$ and $u(0)$ are equal.

\it If no confusion seems likely, a system $\bigl( \hat{\Omega }, \hat{\fcal }, \hat{\fmath } ,
\hat{\p }, \hat{W},u  \bigr)$ from Definition \ref{def-sol-martingale}   will be called a \bf martingale solutions. \rm

\begin{remark}\label{rem-condition G}
Let us recall the following observation from \cite{Brz+Motyl_2013}.
 Since $\Norm{u}{}{}:= \norm{\nabla u}{{L}^{2}}{}$ and  $\dual{\acal u}{u}{} = \dirilsk{u}{u}{}:= \ilsk{\nabla u}{\nabla u}{{L}^{2}}$, we have
 $$ (2-\eta ) \Norm{u}{}{2}  =  2\dual{\acal u}{u}{}-\eta \Norm{u}{}{2} , \;\; u\in \rV.$$
Hence  inequality \eqref{E:G} can be written equivalently in the following form
\begin{equation} \label{A:G_2'}
 2 \dual{\acal u }{u }{} -  \Norm{G(u )}{\lhs (\rK ,\rH)}{2}
     \ge  \eta \Norm{u}{}{2} -{\lambda }_{0} {|u|}_{\rH}^{2} - \rho , \qquad u \in \rV ,
 \tag{G3'}
\end{equation}
\end{remark}

Inequality \eqref{A:G_2'}  is the same as considered by Flandoli and G\c{a}tarek in \cite{Flandoli+G_1995} for Stochastic NSEs in bounded domains.
The assumption $\eta = 2$ corresponds to the case when the noise term does not depend on $\nabla u$.
We will prove that the set of measures induced on appropriate space by the solutions of the Galerkin equations  is tight provided that the map $G$ from
 part  \textbf{(H.2)} of Assumption \ref{assumption-main} satisfies inequalities  \eqref{E:G*}  and \eqref{E:G}.
   Inequality \eqref{E:G*} and   condition \eqref{E:G**}  from  part  \textbf{(H.2)} of Assumption \ref{assumption-main} will be important in passing to the limit as $n\to \infty $ in the Galerkin approximation.  Condition  \eqref{E:G**}   is essential in the case  of unbounded domain $\mathcal{O}$.
It is wort mentioning that the following example of the noise term, analyzed in details in \cite[Section 6]{Brz+Motyl_2013}, is covered by part \textbf{(H.2)} of Assumption  \ref{assumption-main}.

\begin{example} \rm
Let us consider the noise term written classically as
\begin{equation}
 \bigl[ G(u)\bigr](t,x) \,d W(t):= \sum_{i=1}^{\infty } \bigl[
 \bigl( \bi (x) \cdot \nabla  \bigr) u (t,x) + \ci (x) u(t,x) \bigr]
d \betai(t) ,
\end{equation}
where
\begin{eqnarray*}
& & \betai , \quad i\in \nat, \mbox{ are  i.i.d. standard $\mathbb{R}$-valued Brownian Motions,} \\
& & \bi  : \overline{\ocal} \to \rd, \quad i \in \nat,  \mbox{ are functions  of class } {\ccal }^{\infty } \mbox{class},  \\
& & \ci  : \overline{\ocal} \to \rzecz, \quad i \in \nat,  \mbox{ are functions  of }   {\ccal }^{\infty } \mbox{ - of class } ,
\end{eqnarray*}
are given. Assume that
\begin{equation}  \label{E:C_1}
 {C}_{1}:=\sum_{i=1}^{\infty }\bigl( \| \bi {\| }_{{L}^{\infty }}^{2}+\| \diver \bi {\| }_{{L}^{\infty }}^{2} + \| \ci {\| }_{{L}^{\infty }}^{2}\bigr) < \infty
\end{equation}
and there exists $a \in (0,2]$ such that for all $\zeta = ({\zeta }_{1},...,{\zeta }_{d}) \in \rd  $ and all $x\in \mathcal{O}$,
\begin{equation}  \label{E:est_bi_bj_bk}
 \sum_{i=1}^{\infty } \sum_{j,k=1}^{d}
\bij(x)\bik(x) \bigr) {\zeta }_{j} {\zeta }_{k}
\le 2  \sum_{j,k=1}^{d} {\delta }_{jk} {\zeta }_{j} {\zeta }_{k} - a {|\zeta |}^{2}
 = (2-a ){|\zeta |}^{2} .
\end{equation}
This noise term can be reformulated in the following manner.
Let $\rK := {l}^{2}(\nat )$, where ${l}^{2}(\nat )$ denotes the space of all sequences $({h}_{i}{)}_{i \in \nat } \subset \rzecz $ such that $\sum_{i=1}^{\infty }{h}_{i}^{2} < \infty $. It is a Hilbert space with the scalar product given by
$
     \ilsk{h}{k}{{l}^{2}} := \sum_{i=1}^{\infty } {h}_{i} {k}_{i},
$
where $h=({h}_{i})$ and $k=({k}_{i})$ belong to ${l}^{2}(\nat )$.
Putting
\begin{equation} \label{E:G_def}
 G(u) h = \sum_{i=1}^{\infty } \;\bigl[
 \bigl( \bi  \cdot \nabla  \bigr) u  + \ci u \bigr]\; {h}_{i} ,
\qquad u \in \rV, \quad h=({h}_{i}) \in {l}^{2}(\nat ),
\end{equation}
we infer that
the mapping $G$ fulfils all conditions stated in assumption \textbf{(H.2)}\rm , see \cite[Section 6]{Brz+Motyl_2013} for details.
\end{example}

\begin{remark} \label{R:def-mart-sol-test}    \rm
Note that by Definition \ref{def-sol-martingale}  every solution to  problem  \eqref{E:NS} satisfies equality \eqref{E:mart_sol_int_identity} for all $\rv\in \vcal $.
However, equality \eqref{E:mart_sol_int_identity} holds not only for $\rv\in \vcal $ but also for all $\rv \in \rV $.
Indeed, this follows from the density of $\vcal $ in the space $\rV$ and the fact that each term in \eqref{E:mart_sol_int_identity} is well defined and continuous with respect to $\rv\in \rV $.
This remark is important while using the It\^{o} formula in the proof of Lemma \ref{lem-apriori estimates}.
\end{remark}

\begin{remark}  \label{R:Sol_cont_V'}
Let assumptions \textbf{(H.1)}-\textbf{(H.5)} be satisfied.
If the  system $ ( \hat{\Omega }, \hat{\fcal }, \hat{\fmath } , \hat{\p }, \hat{W},u ) $
 is a martingale solution of problem \eqref{E:NS} on the interval $[0,\infty )$,
then $\hat{\p }$-a.e. paths of  the process $u(t)$, $t\in [0,\infty )$, are ${\rV }^{\prime }$-valued continuous functions, i.e.
for $\hat{\p } $-a.e. $\omega \in \hat{\Omega }$
\begin{equation}  \label{E:mart_sol_cont_V'}
  u(\cdot , \omega ) \in \ccal \bigl( [0,\infty ), {\rV }^{\prime } \bigr) ,
\end{equation}
and equality \eqref{E:mart_sol_int_identity} can be rewritten as the following one, understood  in the space ${\rV }^{\prime }$,
\begin{equation}
 u(t) +  \int_{0}^{t} \acal u(s) \, ds
+ \int_{0}^{t} B(u(s)) \, ds
  = u(0) +  \int_{0}^{t} f(s)\, ds
 + \int_{0}^{t} G(u(s))\, d\hat{W}(s) , \ \ t \in [0,\infty ) .   \label{E:mart_sol_int_identity_V'}
\end{equation}
\end{remark}

\begin{proof}
Let us fix any $T>0$.
Let us notice that since the map $G$ satisfies inequality  \eqref{E:G*} in Assumption \ref{assumption-main},  by  inequality  \eqref{E:mart_sol_energy-ineq} we infer that
\[
   \hat{\e } \Bigl[ \int_{0}^{T} \norm{G(u(s))}{\lhs (\rK ,{\rV }^{\prime })}{2} \, ds  \Bigr]
   \le   C\,  \hat{\e } \Bigl[ \int_{0}^{T} (1+\norm{u(s)}{\rH}{2}) \, ds  \Bigr] < \infty .
\]
Thus the process $\mu$ defined by
\[
  \mu (t):= \int_{0}^{t} G(u(s)) \, d \hat{W}(s), \quad  t \in [0,T],
\]
is a ${\rV }^{\prime }$-valued square integrable  continuous martingale.

\bigskip  \noindent
\bf Remark. \it
The process $\mu$ is an $\rH$-valued square integrable continuous martingale, as well. \rm

\begin{proof}
Since the map $G$ satisfies inequality  \eqref{E:G} in Assumption \ref{assumption-main},  using   inequality \eqref{E:mart_sol_energy-ineq} we deduce that
\[
   \hat{\e } \Bigl[ \int_{0}^{T} \norm{G(u(s))}{\lhs (\rK ,\rH)}{2} \, ds  \Bigr]
   \le    \hat{\e } \Bigl[ \int_{0}^{T} [(2-\eta ) \Norm{u(s)}{}{2} +{\lambda }_{0}\norm{u(s)}{\rH}{2} + \rho ] \, ds  \Bigr] < \infty .
\]
Thus $\mu (t)$, $t\in [0,T]$, is an $\rH$-valued square integrable continuous martingale.
\end{proof}

In the framework of Remark \ref{R:Sol_cont_V'}, by the regularity assumption \eqref{eqn-regularity of u}, we infer that for $\hat{\p }$-a.e. $\omega \in \hat{\Omega }$
\[
\acal u(\cdot ,\omega ) \in {L}^{2}(0,T;{\rV }^{\prime }),  \quad
B(u(\cdot ,\omega ),u(\cdot ,\omega )) \in {L}^{4/3}(0,T;{\rV }^{\prime }).
\]
By assumption (H.3), in particular, $f\in {L}^{p}(0,T;{\rV }^{\prime })$. Hence for $\hat{\p }$-a.e. $\omega \in \hat{\Omega }$   the functions
\begin{eqnarray*}
  && [0,T] \ni t \mapsto \int_{0}^{t}  \acal u(s,\omega )\, ds  \in {\rV }^{\prime } , \\
  && [0,T] \ni t \mapsto \int_{0}^{t}  B( u(s,\omega ),(u(s,\omega ))\, ds  \in {\rV }^{\prime }, \\
  && [0,T] \ni t \mapsto \int_{0}^{t}   f(s)\, ds  \in {\rV }^{\prime }
\end{eqnarray*}
are well defined and continuous. Using \eqref{E:mart_sol_int_identity} we infer that for $\hat{\p }$-a.e. $\omega \in \hat{\Omega }$
\[
u(\cdot ,\omega ) \in \mathcal{C} ([0,T],{\rV }^{\prime })
\]
and for every $t\in [0,T]$ equality \eqref{E:mart_sol_int_identity_V'} holds.
Since $T>0$ has been chosen in an arbitrary way, regularity condition \eqref{E:mart_sol_cont_V'} and equality \eqref{E:mart_sol_int_identity_V'} hold.
The proof of the claim is thus complete.
\end{proof}

\section{The continuous dependence of the solutions on the initial state and the external forces in 2D and 3D domains}
 \label{sec-Cont-dependence}

In this section we will concentrate on martingale solutions to  problem  \eqref{E:NS}  on a fixed interval $[0,T]$. The main result is Theorem \ref{thm-continuous dependence-existence}.
We will also need some modification of Theorem 5.1 in  \cite{Brz+Motyl_2013}, contained in Theorem \ref{T:existence_NS}.

\bigskip
As in \cite{Brz+Motyl_2013}
in the proofs we will use the following structure. Let us fix $s >  \frac{d}{2}+1$
and notice that the space ${\rV }_{s}$
is dense in $\rV$ and the natural embedding $ {\rV}_{s} \embed \rV$ is continuous. By  \cite[Lemma  2.5]{Holly+Wiciak_1995}, see also \cite[Lemma  C.1]{Brz+Motyl_2013}, there exists  a separable Hilbert space $U$
such that  $U$ is a dense subset of  $ {\rV}_{s}$ and
\begin{equation} \label{eqn-U}
 \mbox{\it the natural embedding ${\iota }_{s}: U \hookrightarrow  {\rV}_{s}$ is compact \rm }.
\end{equation}
Then we also have
\begin{equation}\label{eqn-U'}
U \embed \rV_s \embed \rH \cong {\rH }^{\prime } \embed \rV_s^\prime\embed U^{\prime } ,
\end{equation}
where ${\rH }^{\prime }$ and $U^{\prime }$ are the dual spaces of $\rH$ and $U$, respectively, ${\rH}^{\prime }$ being identified with $\rH$
and
the dual embedding  ${\rH }^{\prime } \embed U^{\prime }$ is compact as well.

\bigskip
In the next definition we will recall definition of a topological space ${\mathcal{Z}}_{T}$  that plays an important r\^ole in our approach, see  page 1629 and Section 3 in \cite{Brz+Motyl_2013}.

\bigskip  \noindent
To define the space ${\mathcal{Z}}_{T}$ we will need the following four spaces.
\begin{eqnarray*}
\ccal ([0,T],U^{\prime })&:=& \mbox{ the space of  continuous functions } u:[0,T] \to U^{\prime }
                          \mbox{ with the topology }  \nonumber \\
                &         & \mbox{ induced by the norm }
                          \norm{u}{\ccal ([0,T],U^{\prime })}{}:=\sup_{t\in[0,T]} |u(t){|}_{U^{\prime }}\\
 {L}^{2}_{w}(0,T;\rV )  &:=& \mbox{the  space ${L}^{2}(0,T;\rV )$ with the weak topology}, \\
 {L}^{2}(0,T;{\rH }_{\mathrm{loc}}) & :=& \mbox{the space of all measurable functions $u:[0,T]\to \rH$ such that for all $R\in \nat $}  \\
 && \qquad  {p}_{T,R}(u) := {\biggl( \int_{0}^{T}\int_{{\mathcal{O}}_{R}} {|u(t,x)|}^{2} \, dx dt  \biggr) }^{\frac{1}{2}} < \infty  \\
 && \mbox{with the topology generated by the seminorms ${({p}_{T,R})}_{R\in \nat }$.}
\end{eqnarray*}
Let ${\rH}_{w}$ denote the Hilbert space $\rH$ endowed with the weak topology and
let us put
\begin{eqnarray*}
& \ccal ([0,T];{\rH}_{w}) : = & \mbox{the space of weakly continuous functions }
                          u : [0,T] \to \rH \mbox{ endowed with }\nonumber \\
  &                     & \mbox{the weakest topology  such that for all
                          $h \in \rH $   the  mappings } \nonumber \\
  &                     &  \ccal ([0,T];{\rH}_{w}) \ni u \mapsto \ilsk{u(\cdot )}{h}{\rH}
                           \in \ccal  ([0,T];\rzecz )
                        \mbox{ are continuous. }
\end{eqnarray*}

\begin{definition}\label{def-spaces-Z}
\rm For $T>0$ let us put
\begin{equation}
\label{eqn-Z_T}
  \mathcal{Z}_T : = \ccal ([0,T]; U^{\prime }) \cap {L}_{w}^{2}(0,T;\rV)  \cap {L}^{2}(0,T;{\rH}_{\mathrm{loc}}) \cap \ccal ([0,T];{\rH}_{w})
\end{equation}
and let  $\mathcal{T}_T $ be   the supremum of the corresponding four topologies, i.e. the smallest topology on $\mathcal{Z}_T$ such that the four natural embeddings from $\mathcal{Z}_T$ are continuous. \\
The space  ${\mathcal{Z}}_T$  will  also considered with the Borel $\sigma $-algebra, i.e. the smallest $\sigma $-algebra containing the family $\mathcal{T}_T $.
\end{definition}

The following auxiliary result which is needed in the proof of Theorem
 \ref{thm-continuous dependence-existence}, cannot be deduced directly from the Kuratowski Theorem, see Counterexample C.\ref{counterexample-Kuratowski} in the  \ref{sec:Kuratowski}.

\begin{lemma}\label{lem-Kuratowski} Assume that $T>0$. Then the following fours sets
$\ccal ([0,T]; \rH)\cap \mathcal{Z}_T$, $\ccal ([0,T]; \rV)\cap \mathcal{Z}_T$, $L^2(0,T;V)\cap \mathcal{Z}_T $ and $\ccal ([0,T]; \rV^\prime)\cap \mathcal{Z}_T$ are  Borel subsets of $\mathcal{Z}_T$ and the corresponding embedding   tranforms Borel sets into Borel subsets of $\mathcal{Z}_T$. Moreover, the following $\mathbb{R}_+\cup\{+\infty\}$-valued functions
\begin{eqnarray*}
&&\mathcal{Z}_T \ni u \mapsto \begin{cases}\sup_{s \in [0,T]} \norm{u (s)}{\rH}{2}, & \mbox{ if } u\in \ccal ([0,T]; \rH)\cap \mathcal{Z}_T \\
  \infty, &\mbox{ otherwise}, \end{cases}
  \\
  &&\mathcal{Z}_T \ni u \mapsto \begin{cases} \int_{0}^T \Norm{u (s)}{}{2}\, ds, & \mbox{ if } u\in L^2(0,T;\rV)\cap \mathcal{Z}_T, \\ \infty &\mbox{ otherwise}, \end{cases}
\end{eqnarray*}
are Borel.
\end{lemma}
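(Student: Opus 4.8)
The plan is to treat the four "set-membership" claims and the two "functional-measurability" claims by the same mechanism: exhibit each set, respectively each sublevel set of each function, as a countable union/intersection of sets that are manifestly closed (or open, or Borel) in the topology $\mathcal{T}_T$. Recall that $\mathcal{Z}_T$ is a countably-generated topological space (each of the four component topologies is metrizable on bounded sets, and the relevant embeddings are continuous), so it suffices to write everything in terms of the continuous coordinate functionals $u\mapsto \ilsk{u(t)}{h}{\rH}$ for $t\in[0,T]\cap\mathbb{Q}$ and $h$ ranging over a countable dense subset of $\rH$, together with the weak-$L^2(0,T;\rV)$ functionals $u\mapsto \int_0^T \dirilsk{u(s)}{\psi(s)}{}\,ds$ for $\psi$ in a countable dense set, and the seminorms $p_{T,R}$.

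First I would handle $L^2(0,T;\rV)\cap\mathcal{Z}_T$ together with the function $u\mapsto\int_0^T\Norm{u(s)}{}{2}\,ds$. On $\mathcal{Z}_T$ the process $u$ already lies in $L^2_w(0,T;\rV)$, so the point is only to control the $\rV$-norm. Fix a countable set $\{\psi_k\}\subset L^2(0,T;\rV)$ dense in the unit ball of $L^2(0,T;\rV)$. Then for $u\in\mathcal{Z}_T$ one has
\[
\int_0^T\Norm{u(s)}{}{2}\,ds=\sup_{k}\Bigl(2\int_0^T\dirilsk{u(s)}{\psi_k(s)}{}\,ds-\int_0^T\Norm{\psi_k(s)}{}{2}\,ds\Bigr),
\]
with the convention that the supremum is $+\infty$ when $u\notin L^2(0,T;\rV)$; since each summand is $\mathcal{T}_T$-continuous (it is one of the defining functionals of $L^2_w(0,T;\rV)$), the supremum is lower semicontinuous, hence Borel, and its finiteness locus is exactly $L^2(0,T;\rV)\cap\mathcal{Z}_T$, which is therefore Borel. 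For $\ccal([0,T];\rV^\prime)\cap\mathcal{Z}_T$ I would argue that an element of $\mathcal{Z}_T$ is automatically in $\ccal([0,T];U^\prime)$, and the subset where the path is in fact $\rV^\prime$-valued and continuous in $\rV^\prime$ is the set where $\sup_{|t-s|\le 1/m,\,t,s\in\mathbb{Q}}\norm{u(t)-u(s)}{\rV^\prime}{}\to 0$; writing the $\rV^\prime$-norm as a countable sup of $\rV$-duality pairings turns this into a countable Borel condition. The sets $\ccal([0,T];\rH)\cap\mathcal{Z}_T$ and $\ccal([0,T];\rV)\cap\mathcal{Z}_T$ are handled analogously: membership is "$\sup_t\norm{u(t)}{\rH}{2}<\infty$ and the $\rH$- (resp.\ $\rV$-) valued path is weakly-to-strongly, resp.\ norm, equicontinuous", each expressed via countably many coordinate functionals; in particular $u\mapsto\sup_{s\in[0,T]}\norm{u(s)}{\rH}{2}=\sup_{s\in[0,T]\cap\mathbb{Q}}\sup_h \ilsk{u(s)}{h}{\rH}^2$ (sup over a countable dense set of unit-ball $h$) is lower semicontinuous on $\ccal([0,T];\rH_w)$, hence Borel, with finiteness locus the desired set.

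Finally, that "the corresponding embedding transforms Borel sets into Borel subsets of $\mathcal{Z}_T$" follows once we know the embeddings are injective and Borel-measurable with Borel range: an injective Borel map between Polish (or Lusin/Suslin) spaces carries Borel sets to Borel sets by the Lusin–Souslin theorem. Here $\ccal([0,T];\rH)$, $\ccal([0,T];\rV)$, $L^2(0,T;\rV)$ and $\ccal([0,T];\rV^\prime)$ are all separable metric (indeed Polish), $\mathcal{Z}_T$ is Suslin (it is a countable intersection of Suslin spaces), each embedding into $\mathcal{Z}_T$ is continuous (the $\mathcal{T}_T$ topology was defined precisely so that the four natural maps into $\mathcal{Z}_T$ are continuous, and on each of these four source spaces the composite into each factor is clearly continuous) hence Borel, and injectivity is obvious. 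The main obstacle is the bookkeeping in the second paragraph — verifying that the pointwise-in-$t$ and weak-$L^2$ conditions can genuinely be rephrased using only countably many of the continuous functionals generating $\mathcal{T}_T$, so that "closed/Borel" is legitimate rather than merely "sequentially closed"; this is where the Counterexample C.\ref{counterexample-Kuratowski} shows that one cannot simply invoke Kuratowski's theorem and must instead build the countable description by hand, exploiting the explicit product structure of $\mathcal{Z}_T$ and the metrizability of each factor on bounded sets.
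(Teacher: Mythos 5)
Your route is genuinely different from the paper's. The paper does not build a countable Borel description by hand: it applies the classical Kuratowski Theorem C.\ref{thm:kuratowski} inside the Polish space $\ccal([0,T];U')\cap L^2(0,T;\rH_{\mathrm{loc}})$, into which $\ccal([0,T];\rH)$, $\ccal([0,T];\rV)$, $\ccal([0,T];\rV')$ and $L^2(0,T;\rV)$ embed continuously and injectively, concludes that these are Borel there, and then takes traces on $\mathcal{Z}_T$ (legitimate because $\mathcal{T}_T$ refines the topology induced from that Polish space); the two functionals are then handled by Proposition C.\ref{prop:Kuratowski}. Counterexample C.\ref{counterexample-Kuratowski} only forbids a Kuratowski-type theorem with target $\mathcal{Z}_T$ itself, not this detour. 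That said, your lower-semicontinuity argument for $u\mapsto\int_0^T\Norm{u(s)}{}{2}\,ds$ is correct and arguably cleaner than the paper's (note only that the $\psi_k$ must be dense in all of $L^2(0,T;\rV)$, not merely in its unit ball, for $\|x\|^2=\sup_k(2\langle x,\psi_k\rangle-\|\psi_k\|^2)$; and since $\mathcal{Z}_T\subset L^2(0,T;\rV)$ as a set, the ``otherwise $\infty$'' branch is vacuous there).

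There are, however, two concrete problems. First, the claim that $u\mapsto\sup_{s}\norm{u(s)}{\rH}{2}$ has ``finiteness locus the desired set'' is false: every $u\in\mathcal{Z}_T$ is weakly continuous on the compact interval $[0,T]$, hence norm-bounded, so that supremum is finite on all of $\mathcal{Z}_T$ and your lsc supremum does not agree with the first displayed function off $\ccal([0,T];\rH)\cap\mathcal{Z}_T$. You therefore still owe a proof that $\ccal([0,T];\rH)\cap\mathcal{Z}_T$ is Borel; the uniform-continuity-on-rational-times description you gesture at can be made to work (each condition $\norm{u(t)-u(s)}{\rH}{}\le\eps$ is a sublevel set of an lsc function of $u$, and a $U'$-continuous, $\rH$-bounded path uniformly $\rH$-continuous on $\mathbb{Q}\cap[0,T]$ is norm continuous), but it is not spelled out, and the $\ccal([0,T];\rV)$ case additionally requires expressing $\dirilsk{u(s)}{\psi}{}$ through the available functionals $\ilsk{u(s)}{h}{\rH}$ (e.g.\ via $\psi\in D(\rA)$). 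Second, and more seriously, for the assertion that the embeddings carry Borel sets to Borel sets you invoke Lusin--Souslin with target $(\mathcal{Z}_T,\mathcal{T}_T)$, justified by ``$\mathcal{Z}_T$ is a countable intersection of Suslin spaces''. The factor $\ccal([0,T];\rH_w)$ is not obviously Suslin, and $\mathcal{T}_T$ is a non-metrizable topology strictly finer than the Polish one, so Suslin-ness of $(\mathcal{Z}_T,\mathcal{T}_T)$ does not follow from the set $\mathcal{Z}_T$ being Borel in $\ccal([0,T];U')\cap L^2(0,T;\rH_{\mathrm{loc}})$; this is precisely the kind of transfer that Counterexample C.\ref{counterexample-Kuratowski} warns against. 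The paper's step --- push the Borel set forward into the ambient Polish space by Theorem C.\ref{thm:kuratowski} and only then intersect with $\mathcal{Z}_T$ --- is what you would need to substitute here.
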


\begin{proof}  Because $\ccal ([0,T]; U^{\prime }) \cap {L}^{2}(0,T;{\rH}_{\mathrm{loc}})$ is a Polish space, by the Kuratowski Theorem $\ccal ([0,T]; \rH)$   is   Borel subset of $\ccal ([0,T]; U^{\prime }) \cap {L}^{2}(0,T;{\rH}_{loc})$.  Hence the intersection
$
\ccal ([0,T]; \rH) \cap \mathcal{Z}_T $ is a Borel subset of the intersection  $\ccal ([0,T]; U^{\prime }) \cap {L}^{2}(0,T;{\rH}_{loc}) \cap \mathcal{Z}_T$ which happens to be equal to  $\mathcal{Z}_T$.\\
 We can argue in the same way in the case of the spaces  $\ccal ([0,T]; \rV)\cap \mathcal{Z}_T$ and $\ccal ([0,T]; \rV^\prime)\cap \mathcal{Z}_T$.\\
The proof in case the space $L^2(0,T;\rV ) $ is analogous, one needs to begin with an observation that by the Kuratowski Theorem the set $L^2(0,T;\rV ) $ is Borel subset of $ {L}^{2}(0,T;{\rH }_{loc})$. We have used a fact that a product of Borel set in $\ccal ([0,T]; U^{\prime }) \cap {L}^{2}(0,T;{\rH}_{loc})$ and the set $\mathcal{Z}_T$ is a Borel subset of the latter. \\
The same argument applies to the proof that $i_T$ and $j_T$ map Borel subsets of their corresponding domains to Borel sets in $\mathcal{Z}_T$.
The  last  part of Lemma is a consequence  Proposition C.\ref{prop:Kuratowski}.
\end{proof}

\subsection{Tightness criterion and  Jakubowski's version of the Skorokhod theorem}

One of the main tools in this section is the tightness criterion in the space ${\mathcal{Z}}_{T}$ defined in identity  \eqref{eqn-Z_T}.
We will use a slight generalization of of the criterion stated in  Corollary 3.9 from \cite{Brz+Motyl_2013},  compare with the proof of Lemma 5.4 therein.
Namely, we will consider the sequence of stochastic processes defined on their own probability spaces. Let $({\Omega }_{n},{\mathcal{F}}_{n}, {\mathbb{F}}_{n},{\mathbb{P}}_{n})$, $n \in \nat $, be a sequence of probability spaces with the filtration ${\mathbb{F}}_{n}={({\mathcal{F}}_{n,t})}_{t \ge 0 }$.

\begin{cor} \bf (tightness criterion) \it \label{cor-tigthness_criterion}
Assume that  $(\Xn {)}_{n \in \nat }$ is  a sequence of continuous ${\mathbb{F}}_{n}$-adapted $U^{\prime }$-valued processes defined on ${\Omega }_{n}$ and
 such that
 \begin{eqnarray}
 \label{cond-a}
 &&        \sup_{n\in \nat} \mathbb{E}_n \, \bigl[ \sup_{s \in [0,T]} | \Xn (s) {|}_{\rH}^{2}  \bigr]< \infty,\\
 \label{cond-b}
 &&         \sup_{n\in \nat} \mathbb{E}_{n} \,\biggl[ \int_{0}^{T} \Norm{\Xn (s)}{}{2} \, ds    \biggr] <\infty,
 \end{eqnarray}
\begin{description}
\item[(a)]
 and for every $\, \eps >0$ and for every  $\eta >0$  there exists $ \delta >0 $ such that for every
sequence $({{\tau}_{n} } {)}_{n \in \nat }$ of  $[0,T]$-valued  $\mathbb{F}_n$-stopping times
 one has
\begin{eqnarray}
 \label{cond-c}
    \sup_{n \in \nat} \, \sup_{0 \le \theta \le \delta }  {\p }_{n}\bigl\{\,
    \vert {X}_{n} ({\tau }_{n} +\theta )- {X}_{n} ( {\tau }_{n}  )\vert_{U^\prime}  \geq \eta \,\bigr\}  \le \eps .
\end{eqnarray}
\end{description}
Let ${\tilde{\p }}_{n}$ be the law of $\Xn $ on the Borel $\sigma$-field $\mathcal{B}(\zcal_T)$. Then for every $\eps >0 $ there exists a compact subset ${K}_{\eps }$ of $\zcal_T $ such that
\[
  \sup_{n\in \nat} {\tilde{\p }}_{n}({K}_{\eps })\ge 1-\eps .
\]
\end{cor}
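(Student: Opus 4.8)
The plan is to verify the hypotheses of the tightness criterion from \cite[Corollary 3.9]{Brz+Motyl_2013} (or rather its slight generalization mentioned in the text), which characterizes relative compactness in $\zcal_T$ via four separate conditions corresponding to the four component spaces $\ccal([0,T];U^\prime)$, $L^2_w(0,T;\rV)$, $L^2(0,T;\rH_{\mathrm{loc}})$ and $\ccal([0,T];\rH_w)$. The key point is that the original corollary is stated for a single probability space, whereas here we have a sequence $(\Omega_n,\fcal_n,\fmath_n,\p_n)$; but since the conclusion only concerns the laws $\tilde\p_n$ on the Polish-ish space $\zcal_T$ and the Prokhorov-type compactness argument, this generalization is purely formal and the proof goes through verbatim once the four conditions are checked uniformly in $n$.

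First I would handle the $L^2_w(0,T;\rV)$ and $\ccal([0,T];\rH_w)$ components: by \eqref{cond-a} and \eqref{cond-b}, the sequence $(X_n)$ is bounded in $L^2(\Omega_n; L^2(0,T;\rV))$ and in $L^2(\Omega_n; L^\infty(0,T;\rH))$. A Chebyshev argument gives, for each $\eps>0$, a ball $B_R\subset L^2(0,T;\rV)$ and a ball $\tilde B_R$ in $L^\infty(0,T;\rH)$ (hence a bounded, and therefore relatively weakly compact, set) with $\p_n$-probability at least $1-\eps/4$ uniformly in $n$; bounded sets in $L^2(0,T;\rV)$ are relatively compact in $L^2_w(0,T;\rV)$, and bounded subsets of $L^\infty(0,T;\rH)$ that are also uniformly $U^\prime$-equicontinuous are relatively compact in $\ccal([0,T];\rH_w)$. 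The equicontinuity in $U^\prime$ needed for the latter, as well as relative compactness in $\ccal([0,T];U^\prime)$, is exactly what the Aldous-type condition \eqref{cond-c} delivers: the standard passage from the stopping-time estimate \eqref{cond-c} to tightness in $\ccal([0,T];U^\prime)$ (via the Aldous criterion / the argument in \cite{Brz+Motyl_2013}) produces, for each $\eps>0$, a set of paths that is uniformly bounded and uniformly equicontinuous in $U^\prime$, hence relatively compact there by Arzelà--Ascoli, with uniform probability $\ge 1-\eps/4$.

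Next I would treat the $L^2(0,T;\rH_{\mathrm{loc}})$ component. Here I would invoke the deterministic compactness embedding underlying Corollary 3.9 of \cite{Brz+Motyl_2013}: a set of functions bounded in $L^2(0,T;\rV)$ and in $\ccal([0,T];U^\prime)$ and equicontinuous in $U^\prime$ is relatively compact in $L^2(0,T;\rH_{\mathrm{loc}})$ — this is a Dubinskii/Aubin--Lions--type lemma exploiting the compact embedding $U\embed \rV_s$ from \eqref{eqn-U} together with $\rV\embed\rH_{\mathrm{loc}}$ compactly on each bounded $\ocal_R$ (Rellich on the bounded piece $\ocal_R$). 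Combining the four pieces, take $K_\eps$ to be the closure in $\zcal_T$ of the intersection of the four relatively compact sets obtained with parameter $\eps/4$; then $K_\eps$ is compact in $(\zcal_T,\tcal_T)$ and $\tilde\p_n(K_\eps)\ge 1-\eps$ for all $n$.

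The main obstacle, and the only genuinely non-formal step, is the derivation of $U^\prime$-equicontinuity of (a large-probability set of) the paths from the stopping-time condition \eqref{cond-c}: this is the Aldous continuity criterion, and one must be careful that it is applied uniformly over the sequence of distinct probability spaces $(\Omega_n,\p_n)$. Everything else — the Chebyshev estimates from \eqref{cond-a}--\eqref{cond-b}, the Arzelà--Ascoli and Aubin--Lions--Dubinskii compactness facts, and the assembly of $K_\eps$ — is routine and already packaged in \cite[Corollary 3.9]{Brz+Motyl_2013}, so the proof reduces to checking that the passage to a sequence of probability spaces changes nothing, and citing that corollary.
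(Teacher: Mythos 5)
Your proposal is correct and takes essentially the same route as the paper: the paper's entire proof consists of the remark that the argument of \cite[Corollary 3.9]{Brz+Motyl_2013} carries over verbatim to a sequence of processes on distinct probability spaces, which is exactly the reduction you make. Your additional unpacking of that corollary's internals (Chebyshev from \eqref{cond-a}--\eqref{cond-b}, the Aldous condition yielding $U^{\prime}$-equicontinuity, and the deterministic compactness lemma assembling the four components of $\mathcal{Z}_T$) is consistent with the cited source, with the only minor imprecision being that relative compactness in $\ccal([0,T];U^{\prime})$ uses the compactness of $\rH\embed U^{\prime}$ for the pointwise compactness in the Arzel\`a--Ascoli step, not boundedness in $U^{\prime}$ alone.
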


The proof of Corollary \ref{cor-tigthness_criterion} is essentially the same as the proof of
\cite[Corollary 3.9]{Brz+Motyl_2013}.

\bigskip
If the sequence $(\Xn {)}_{n \in \nat }$ satisfies condition \textbf{(a)} then we say that it satisfies the Aldous condition \textbf{[A]}  in $U^{\prime }$ on [0,T]. If it satisfies condition \textbf{(a)} for each $T>0$,  we say that it satisfies the Aldous condition \textbf{[A]} in $U^{\prime }$.

Obviously, the class of $U^\prime$-valued processes satisfying the Aldous condition is a real vector space.  Below we will formulate a sufficient condition for the Aldous condition. This idea has been used in the proof of Lemma 5.4 in \cite{Brz+Motyl_2013} but it has not been formulated in such a way.

\begin{lemma}\label{lem-Aldous} Assume that $Y$ is a separable Banach space, $\sigma\in (0,1]$ and that  $(\un {)}_{n \in \nat }$ is  a sequence of continuous $\mathbb{F}_{n}$-adapted $Y$-valued processes indexed by $[0,T]$ for some $T>0$,   such that
\begin{description}
\item[(a')]  there exists $C>0$ such that   for every $\theta>0$ and for every
sequence $({{\tau}_{n} } {)}_{n \in \nat }$ of $[0,T]$-valued $\mathbb{F}_{n}$-stopping times with
 one has
\begin{eqnarray}
& &\mathbb{E}_{n}\,\bigl[ {| \un (\taun + \theta ) - \un(\taun) | }_{Y}  \bigr]
 \leq C\theta^\sigma.   \label{eqn-Aldous-sufficient}
\end{eqnarray}
\end{description}
Then the sequence   $(\un {)}_{n \in \nat }$ satisfies the Aldous condition \textbf{[A]} on $[0,T]$.
\end{lemma}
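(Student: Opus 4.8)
The plan is to derive the Aldous condition \textbf{[A]} from hypothesis \textbf{(a')} by an elementary application of the Chebyshev--Markov inequality. Fix $\varepsilon>0$ and $\eta>0$. Given an arbitrary sequence $(\tau_n)_{n\in\nat}$ of $[0,T]$-valued $\mathbb{F}_n$-stopping times and any $\theta>0$, I would estimate, for each $n$,
\[
\p_n\bigl\{\,\vert \un(\tau_n+\theta)-\un(\tau_n)\vert_Y\geq\eta\,\bigr\}
\;\leq\;\frac{1}{\eta}\,\e_n\bigl[\vert \un(\tau_n+\theta)-\un(\tau_n)\vert_Y\bigr]
\;\leq\;\frac{C\theta^\sigma}{\eta},
\]
where the last inequality is exactly \eqref{eqn-Aldous-sufficient}. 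Since $\sigma\in(0,1]$, the quantity $C\theta^\sigma/\eta\to 0$ as $\theta\to 0^+$, so we may choose $\delta>0$ so small that $C\delta^\sigma/\eta\leq\varepsilon$, e.g.\ $\delta:=\bigl(\varepsilon\eta/C\bigr)^{1/\sigma}$. Then for all $n\in\nat$ and all $\theta\in[0,\delta]$ we have $C\theta^\sigma/\eta\leq C\delta^\sigma/\eta\leq\varepsilon$, which yields
\[
\sup_{n\in\nat}\ \sup_{0\leq\theta\leq\delta}\ \p_n\bigl\{\,\vert \un(\tau_n+\theta)-\un(\tau_n)\vert_Y\geq\eta\,\bigr\}\;\leq\;\varepsilon.
\]
Since the sequence $(\tau_n)$ was arbitrary, this is precisely condition \textbf{(a)} (with $Y$ in place of $U'$), i.e.\ the sequence $(\un)_{n\in\nat}$ satisfies the Aldous condition \textbf{[A]} on $[0,T]$.

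A minor point to attend to is the measurability/well-posedness of the expression $\e_n[\vert \un(\tau_n+\theta)-\un(\tau_n)\vert_Y]$: since each $\un$ is a continuous $Y$-valued process and $\tau_n$, $\tau_n+\theta$ (interpreted, as is standard in this context, with $\tau_n+\theta$ truncated at $T$, or alternatively the processes extended constantly beyond $T$) are stopping times, the evaluations $\un(\tau_n)$ and $\un(\tau_n+\theta)$ are $\fcal_n$-measurable $Y$-valued random variables, and $Y$-separability guarantees the norm is a genuine random variable. One should note that \textbf{(a')} implicitly requires these expectations to be finite for the bound $C\theta^\sigma$ to be meaningful, so no extra integrability hypothesis is needed. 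I do not anticipate any real obstacle here: the lemma is purely a packaging statement that isolates a convenient sufficient condition, and the whole argument is a one-line Markov inequality followed by the choice of $\delta$. The only thing worth being careful about is to phrase the conclusion uniformly in $n$ and in the stopping-time sequence, matching verbatim the formulation of condition \textbf{(a)} in Corollary \ref{cor-tigthness_criterion}.
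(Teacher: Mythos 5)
Your proof is correct and coincides with the paper's own argument: a Markov--Chebyshev estimate giving the bound $C\theta^\sigma/\eta$, followed by the choice $\delta=(\varepsilon\eta/C)^{1/\sigma}$. The remarks on measurability and on truncating $\tau_n+\theta$ at $T$ are sensible housekeeping but add nothing essential beyond what the paper does.
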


\begin{proof}
Let us fix $\eta > 0 $ and $\eps >0$. By the Chebyshev inequality and the estimate \eqref{eqn-Aldous-sufficient}  we obtain
\[
{\p }_{n}\bigl( \bigl\{
{| \un (\taun + \theta ) - \un(\taun) | }_{Y} \ge \eta \bigr\} \bigr)
 \le \frac{1}{\eta } \mathbb{E}_{n} \, \bigl[ {| \un (\taun + \theta ) - \un(\taun) |}_{Y}  \bigr]
 \le \frac{C \cdot \theta^\sigma}{\eta }  ,  \qquad n \in \nat.
\]
  Let us ${\delta }:= \bigl[\frac{\eta \cdot \eps }{C}\bigr]^{\frac1{\sigma}}  $. Then we have
\[
 \sup_{n\in \nat }\sup_{1 \le \theta \le {\delta }}  {\p }_{n}\bigl\{
  {| \un (\taun + \theta ) - \un(\taun) | }_{Y} \ge \eta \bigr\} \le \eps  ,
\]
This completes the proof of Lemma \ref{lem-Aldous}.
\end{proof}

\begin{remark}\label{rem-additional}

 \rm As can be seen in \eqref{eqn-Z_T}, the space ${\mathcal{Z}}_{T}$ is defined as an intersection of four spaces, one of them being  the space $\mathcal{C}([0,T];{U}^{\prime })$. The latter space plays, in fact, only an auxiliary r\^{o}le. Let us recall that the space $U$, see \eqref{eqn-U} and \cite[Section 2.3]{Brz+Motyl_2013}, is important in the construction of the solutions to stochastic  Navier-Stokes equations via the Galerkin method in  the case of an unbounded domain, i.e.  when the embedding $\rV \subset \rH$ is not compact. (In the case of a bounded domain we can take, e.g. $U:={\rV }_{s}$ for sufficiently large $s$.) In particular, the orthonormal basis of the space $\rH$, which we use in the Galerkin method is contained in $U$, so the Galerkin solutions ''live in'' the space $U$.

With the space $U$ in hand,   in \cite{Brz+Motyl_2013} we prove an appropriate compactness and tightness criteria in the space ${\mathcal{Z}}_{T}$, see \cite[Lemma 3.3 and Corollary 3.9]{Brz+Motyl_2013}. Let us emphasize that in order to prove the relative compactness of an appropriate set in the Fr\'{e}chet space ${L}^{2}(0,T;{\rH}_{loc})$ first we need to prove a certain  generalization of the classical Dubinsky Theorem, see \cite[Lemma 3.1]{Brz+Motyl_2013}, where the space $\mathcal{C}([0,T];{U}^{\prime })$ is used. This result  is related to the Aldous condition in the space ${U}^{\prime }$ in the tightness criterion, \eqref{cond-c} in Corollary \ref{cor-tigthness_criterion} and \cite[Corollary 3.9(c)]{Brz+Motyl_2013}.

We will use Corollary \ref{cor-tigthness_criterion}  to prove Theorems \ref{thm-continuous dependence-tightness} and \ref{thm-continuous dependence-existence}, below.
Even though, the presence  of the space $\mathcal{C}([0,T];{U}^{\prime })$ in the definition of the space ${\mathcal{Z}}_{T}$ is natural in the context of the Galerkin approximation solutions, it's presence in the context of Theorems \ref{thm-continuous dependence-tightness} and \ref{thm-continuous dependence-existence} where  we consider sequences of the solutions of the Navier-Stokes equations seems to be unnecessary.
However, again because of the lack of the compactness of the embedding $\rV \subset \rH $ to prove tightness in Theorem \ref{thm-continuous dependence-tightness} we still use Corollary \ref{cor-tigthness_criterion} in its original form.
\end{remark}

In the proofs of the theorems on the existence of a martingale solution and  on the continuous dependence of the data we use a version of the Skorokhod theorem for nonmetric spaces.
For convenience of the reader let us recall  the following Jakubowski's \cite{Jakubowski_1998} version of the Skorokhod Theorem,
see also Brze\'{z}niak and Ondrej\'{a}t  \cite{Brz+Ondrejat_2011}.

\begin{theorem} \label{T:2_Jakubowski} \rm (Theorem 2 in \cite{Jakubowski_1998}). \it
Let $(\mathcal{X} , \tau )$ be a topological space such that there exists a sequence $({f}_{m}) $ of continuous functions ${f}_{m}:\mathcal{X}  \to \rzecz $ that separates points of $\mathcal{X} $.
Let $({X}_{n})$ be a sequence of  $\mathcal{X} $-valued Borel random variables. Suppose that for every $\eps >0$
there exists a compact subset ${K}_{\eps} \subset \xcal $ such that
\[
  \sup_{n \in \nat } \p (\{ {X}_{n} \in {K}_{\eps } \} ) > 1-\eps .
\]
Then there exists a subsequence $({X}_{{n}_{k}}{)}_{k\in \nat }$, a sequence $({Y}_{k}{)}_{k\in \nat }$ of $\mathcal{X} $-valued Borel random variables and an $\mathcal{X} $-valued Borel random variable $Y$ defined on some probability space  $(\Omega , \fcal ,\p )$ such that
\[
    \lcal ({X}_{{n}_{k}}) = \lcal ({Y}_{k}), \qquad k=1,2,...
\]
and for all $ \omega \in \Omega $:
\[
  {Y}_{k}(\omega ) \stackrel{\tau }{\longrightarrow } Y(\omega )  \quad \mbox{ as } k \to \infty .
\]
\end{theorem}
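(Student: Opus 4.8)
\noindent
The plan is to recall how this statement (Theorem~2 of \cite{Jakubowski_1998}) reduces to the classical Skorokhod representation theorem on a Polish space. First I would set $F:=(f_m)_{m\in\nat}:\mathcal{X}\to\rzecz^{\nat}$, where $\rzecz^{\nat}$ carries the product topology and hence is Polish. The map $F$ is continuous because every $f_m$ is, and it is \emph{injective} because the family $(f_m)$ separates the points of $\mathcal{X}$. Using the uniform tightness hypothesis I would choose an increasing sequence of compact sets $K_j\subset\mathcal{X}$, $j\in\nat$, with $\p(X_n\in K_j)>1-1/j$ for all $n$ and all $j$ (obtained by taking finite unions of the compact sets provided by the hypothesis with $\eps=1/j$). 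Each image $F(K_j)$ is compact, hence closed and Borel, in $\rzecz^{\nat}$; since $F$ is injective one has $K_j=F^{-1}(F(K_j))$, so each $K_j$ is Borel in $\mathcal{X}$, and the restriction $F|_{K_j}:K_j\to F(K_j)$, being a continuous bijection from a compact space onto a Hausdorff space, is a homeomorphism with continuous inverse. Write $\mathcal{X}_0:=\bigcup_{j}K_j$; it is a Borel subset of $\mathcal{X}$ with $\lcal(X_n)(\mathcal{X}_0)=1$ for every $n$.

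Next I would transport the laws: set $\nu_n:=F_{\ast}\lcal(X_n)$ on $\rzecz^{\nat}$. Then $\nu_n$ is concentrated on $\bigcup_j F(K_j)$ and $\nu_n(F(K_j)^{c})<1/j$, so the family $(\nu_n)$ is uniformly tight on the Polish space $\rzecz^{\nat}$; by Prokhorov's theorem it is relatively weakly compact, and I extract a subsequence with $\nu_{n_k}\Rightarrow\nu$. By the Portmanteau theorem applied to the closed sets $F(K_j)$ one gets $\nu(F(K_j))\ge 1-1/j$, so $\nu$ too is concentrated on $\bigcup_j F(K_j)$. Now I apply the classical Skorokhod representation theorem in $\rzecz^{\nat}$: on a suitable probability space $(\Omega,\fcal,\p)$ there are Borel random variables $\xi_k$ and $\xi$ with $\lcal(\xi_k)=\nu_{n_k}$, $\lcal(\xi)=\nu$ and $\xi_k\to\xi$ $\p$-a.s. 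Finally I define $Y_k$ and $Y$ by applying the Borel inverse of $F$ on $\bigcup_j F(K_j)$ to $\xi_k$ and $\xi$ respectively, and a fixed point of $\mathcal{X}$ on the complementary null sets; since $F$ is injective and all the measures are carried by $\bigcup_j F(K_j)$, this yields $\lcal(Y_k)=\lcal(X_{n_k})$, and after discarding one more null set one may arrange that the remaining conclusions hold for every $\omega$.

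The hard part will be the very last assertion, namely that $Y_k(\omega)\to Y(\omega)$ in the topology $\tau$, and not merely coordinatewise. From $\xi_k(\omega)\to\xi(\omega)$ in $\rzecz^{\nat}$ one reads off immediately only that $f_m(Y_k(\omega))\to f_m(Y(\omega))$ for each $m$, i.e. convergence in the weak topology generated by $(f_m)$; and since $F$ is a homeomorphism only on each individual compact $K_j$ and not on all of $\mathcal{X}_0$, this does not by itself give $\tau$-convergence. The point is to ensure that, for $\p$-almost every $\omega$, the whole sequence $(Y_k(\omega))_k$ together with $Y(\omega)$ eventually lies in one common compact $K_{j(\omega)}$; once that is known, continuity of $\bigl(F|_{K_{j(\omega)}}\bigr)^{-1}$ closes the argument. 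Securing this is not automatic from the black-box version of the classical theorem, because the Borel--Cantelli-type bounds one would want are not available uniformly in $k$; instead one has to run the coupling so that it respects the exhaustion $(K_j)$ --- in effect reproving the classical Skorokhod theorem along the compact sets --- and this is the technical core of Jakubowski's argument, for which I would follow \cite{Jakubowski_1998}.
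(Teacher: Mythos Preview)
The paper does not prove this theorem; it is simply recalled, with attribution, as Theorem~2 of \cite{Jakubowski_1998}. The only comment the paper adds is the remark immediately following the statement, that the sequence $(f_m)$ induces a weaker topology which coincides with $\tau$ on $\sigma$-compact subsets---precisely the observation you isolate as the crux of the argument.

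Your sketch is correct and is essentially Jakubowski's original proof: embed $\mathcal{X}$ injectively and continuously into the Polish space $\rzecz^{\nat}$ via $F=(f_m)$, transport the tight family of laws, apply Prokhorov and the classical Skorokhod theorem there, and pull back using that $F$ restricted to each compact $K_j$ is a homeomorphism onto its image. You have also correctly identified the only genuine difficulty, namely upgrading coordinatewise convergence $\xi_k(\omega)\to\xi(\omega)$ to $\tau$-convergence $Y_k(\omega)\to Y(\omega)$, and the mechanism that resolves it: arranging that for almost every $\omega$ the entire tail of $(Y_k(\omega))$ together with $Y(\omega)$ lies in a single $K_{j(\omega)}$, so that the inverse homeomorphism applies. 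Your candid acknowledgement that this last step requires building the Skorokhod coupling compatibly with the exhaustion $(K_j)$, rather than invoking the classical theorem as a black box, is accurate; this is exactly where Jakubowski's construction does real work, and deferring to \cite{Jakubowski_1998} for it is appropriate.
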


Note that the sequence $({f}_{m})$ defines another, weaker topology on $\mathcal{X}$. However, this topology restricted to $\sigma $-compact subsets of $\mathcal{X}$ is equivalent to the original topology $\tau $.
Let us emphasize that thanks to the assumption on the tightness of the set of laws $\{ \mathcal{L} ({X}_{n}) , n \in \nat \}$ on the space $\mathcal{X} $ the maps $Y$ and ${Y}_{k}$, $k \in \nat $, in Theorem \ref{T:2_Jakubowski} are measurable with respect to the Borel $\sigma $-field in the space $\mathcal{X} $.

The following result has been proved in the proof of \cite[Corollary 3.12]{Brz+Motyl_2013} for the spaces $\mathcal{Z}_T$.
\begin{lemma}\label{lem-Z_T} The topological space ${\mathcal{Z}}_{T}$  satisfies the assumptions of Theorem \ref{T:2_Jakubowski}.
\end{lemma}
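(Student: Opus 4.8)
The plan is to verify the two hypotheses of Jakubowski's theorem (Theorem \ref{T:2_Jakubowski}) for the space $\mathcal{Z}_T$ with its topology $\mathcal{T}_T$: namely, that $\mathcal{Z}_T$ carries a countable family of continuous real-valued functions separating its points. Since $\mathcal{Z}_T$ is the intersection of the four spaces $\ccal([0,T];U^\prime)$, $L^2_w(0,T;\rV)$, $L^2(0,T;\rH_{\mathrm{loc}})$ and $\ccal([0,T];\rH_w)$, endowed with the supremum of the four subspace topologies, it suffices to exhibit such a separating sequence on each of the four factors and then take the (countable) union of the pullbacks under the continuous inclusions $\mathcal{Z}_T \embed (\text{factor})$; a sequence of functions that separates points on one factor will, in particular, separate those pairs of points of $\mathcal{Z}_T$ that already differ as elements of that factor, and since a pair of distinct elements of $\mathcal{Z}_T$ must differ in at least one factor, the union separates all of $\mathcal{Z}_T$.

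First I would treat the two "honest" metric (indeed Polish, or at least separable metrizable) factors $\ccal([0,T];U^\prime)$ and $L^2(0,T;\rH_{\mathrm{loc}})$: any separable metric space admits a countable family of continuous functions separating points (e.g.\ the distances to a countable dense set), so these contribute no difficulty. For $L^2_w(0,T;\rV)$, the weak topology on a separable Hilbert space (here $L^2(0,T;\rV)$, which is separable) is separated by the countable family $u \mapsto (u,e_k)_{L^2(0,T;\rV)}$, $k\in\nat$, where $(e_k)$ is any dense sequence in $L^2(0,T;\rV)$ — these are weakly continuous and separate points. For $\ccal([0,T];\rH_w)$, recall its topology is generated by the maps $u \mapsto (u(\cdot),h)_\rH \in \ccal([0,T];\rzecz)$ for $h\in\rH$; composing with a countable separating family on $\ccal([0,T];\rzecz)$ (a separable Banach space) and letting $h$ range over a countable dense subset of $\rH$ yields a countable family $u \mapsto \ell_j\big((u(\cdot),h_i)_\rH\big)$ that is continuous on $\ccal([0,T];\rH_w)$ and separates its points, since two weakly continuous $\rH$-valued paths that agree against every $h_i$ at every time agree identically.

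Assembling: the union over the four factors of these countable separating families, pulled back to $\mathcal{Z}_T$ via the continuous inclusions, is a countable family of $\mathcal{T}_T$-continuous functions; given $u\neq v$ in $\mathcal{Z}_T$ they must differ in the underlying set of at least one factor, and the corresponding subfamily then separates them. This is exactly the hypothesis of Theorem \ref{T:2_Jakubowski}, so the lemma follows. The only point requiring a little care — and the main (mild) obstacle — is the bookkeeping that the topology $\mathcal{T}_T$ on $\mathcal{Z}_T$ is genuinely the initial topology of the four inclusions, so that each pulled-back function is continuous, and that $\mathcal{Z}_T$ is, as a set, the set-theoretic intersection, so that distinct points are already distinct in some factor; both are immediate from Definition \ref{def-spaces-Z}. (In fact, since each of the four factor topologies is itself separated by a countable family, one may even note $\mathcal{T}_T$ is separated by a countable family directly, which is all that is needed.)
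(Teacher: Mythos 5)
Your argument is correct and is essentially the one the paper relies on: the paper itself gives no proof of Lemma \ref{lem-Z_T} but defers to the proof of Corollary 3.12 in \cite{Brz+Motyl_2013}, where exactly this kind of countable separating family, built factor by factor and pulled back along the continuous inclusions, is exhibited. One small simplification worth noting: since every element of $\mathcal{Z}_T$ is in particular a continuous $U^{\prime}$-valued function, two distinct elements of $\mathcal{Z}_T$ are already distinct in the Polish factor $\ccal([0,T];U^{\prime})$, so the countable separating family on that single factor (e.g.\ distances to a countable dense set, or the maps $u\mapsto \langle u(q),\psi_i\rangle$ for rational $q$ and $\psi_i$ in a countable dense subset of $U$) suffices, and the analysis of the other three factors is not needed.
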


\subsection{The existence and properties of martingale solutions on $[0,T]$}  \label{S:Existence}

We will concentrate on martingale solutions to  problem  \eqref{E:NS}  on a fixed interval $[0,T]$.
The following result is a slight generalisation of Theorem 5.1 in  \cite{Brz+Motyl_2013}.
In comparison to \cite{Brz+Motyl_2013} the deterministic initial state has been replaced by the random one satisfying assumption (H.3). However, our attention will be focused on the estimates satisfied by the solutions of the Navier-Stokes equations.
We claim that there exists a solution $u$ satisfying estimate
$\hat{\e} \bigl[ \sup_{t \in [0,T]} \norm{u(t)}{\rH}{q} \bigr] \le {C}_{1}(p,q) $ for every $q \in [2,p]$,
(and not only for $q=2$ as stated in inequality (5.1) in \cite{Brz+Motyl_2013}). Moreover, we analyse what is the relation between the constant ${C}_{1}(p,q)$ and the initial state ${u}_{0}$ and the external forces $f$. The same concerns the estimate on $\hat{\mathbb{E}} [\int_{0}^{T} \Norm{u(t)}{}{2} \, dt ]$.
These results generalise \cite[Theorem 5.1]{Brz+Motyl_2013}.
In the second part of Theorem \ref{T:existence_NS} we will prove another estimate on $u$  in the case when $\ocal $ is a 2D or 3D Poincar\'{e} domain, see \eqref{E:Poincare-NS-ineq} below. This estimate will be of crucial importance in the proof of existence of an invariant measure in 2D case. The proof of Theorem \ref{T:existence_NS} is based on the Galerkin method. The analysis of the Galerkin equations is postponed to \ref{App:Galerkin-est}.
Recall also that in assumption (H.3) we have put $\frac{\eta }{2-\eta }=\infty$ when $\eta=2$.

\begin{theorem} \label{T:existence_NS}
Let assumptions \textbf{(H.1)}-\textbf{(H.5)} be satisfied.
In particular, we assume that  $p$ satisfies \eqref{eqn-p_cond}, i.e.
\begin{equation*}
p\in  \bigl[ 2, 2+ \frac{\eta }{2-\eta } \bigr) ,
\end{equation*}
where  $\eta \in (0,2]$ is given in assumption \textbf{(H.2)}.
\begin{itemize}
\item[(1) ] For every  $T>0 $ and ${R}_{1}, {R}_{2}>0$
 if  ${\mu }_{0}$ is a Borel probability measure on $\rH$,
${f} \in  {L}^{p}([0,\infty ); \rV^\prime )$ satisfy $\int_{\rH} \vert x\vert^p {\mu }_{0}(dx) \leq R_1$
 and $ \vert f{\vert}_{{L}^{p}(0,T;\rV^\prime)}\leq R_2$, then there exists a \textbf{martingale  solution} $\bigl( \hat{\Omega }, \hat{\fcal }, \hat{\fmath } ,
\hat{\p }, \hat{W},u  \bigr) $   to  problem  \eqref{E:NS}  with the initial law ${\mu }_{0}$   which satisfy the following  estimates: for every $q\in [1,p]$ there exist constants $C_{1}(p,q)$ and $C_{2}(p)$,
depending also on $T$, ${R}_{1}$ and ${R}_{2}$, such that
\begin{equation} \label{E:H_estimate_NS}
\hat{\mathbb{E}} \bigl( \sup_{s\in [0,T] } {|u(s)|}_{\rH}^{q} \bigr) \le {C}_{1}(p,q) ,
\end{equation}
putting ${C}_{1}(p):={C}_{1}(p,p)$, in particular,
\begin{equation} \label{E:H_estimate_NS_p}
\hat{\mathbb{E}} \bigl( \sup_{s\in [0,T] } {|u(s)|}_{\rH}^{p} \bigr) \le {C}_{1}(p) ,
\end{equation}
and
\begin{equation} \label{E:V_estimate_NS}
 \hat{\mathbb{E}} \bigl[ \int_{0}^{T} \norm{ \nabla u (s)}{{L}^{2}}{2} \, ds \bigr] \le {C}_{2}(p).
\end{equation}
\item[(2) ]
Moreover, if $\ocal $ is a Poincar\'{e} domain and  the map $G$ satisfies inequality  \eqref{E:G} in Assumption \ref{assumption-main} with ${\lambda }_{0}=0$, then there exists  a martingale solution
$\bigl( \hat{\Omega }, \hat{\fcal }, \hat{\fmath },\hat{\p } ,u \bigr) $ of problem \eqref{E:NS}   satisfying additionally the following inequality for every $T>0$
\begin{equation}  \label{E:Poincare-NS-ineq}
    \frac{\eta }{2} \hat{\e } \biggl[ \int_{0}^{T} \norm{\nabla u (s)}{{L}^{2}}{2} \, ds \biggr]  \biggr)
   \le \hat{\mathbb{E}} [\, \norm{u(0)}{\rH}{2} \, ]
 + \frac{2}{\eta } \int_{0}^{T} \norm{f(s)}{{\rv }^{\prime }}{2} \, ds + \rho T.
\end{equation}
\end{itemize}
\end{theorem}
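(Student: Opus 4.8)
The plan is to prove Theorem~\ref{T:existence_NS} via the Galerkin method, exactly as in \cite[Theorem 5.1]{Brz+Motyl_2013}, but paying close attention to two refinements: obtaining the $L^q$-bound \eqref{E:H_estimate_NS} for all $q\in[1,p]$ (not merely $q=2$), together with explicit dependence of the constants on $R_1,R_2$, and, in part (2), establishing the sharp energy estimate \eqref{E:Poincare-NS-ineq} when $\lambda_0=0$ and $\ocal$ is a Poincar\'e domain. First I would set up the Galerkin approximations $u_n$ in the finite-dimensional spaces spanned by the first $n$ eigenfunctions (or an $\rH$-orthonormal basis contained in $U$, cf. Remark~\ref{rem-additional}), which solve finite-dimensional SDEs with globally defined solutions thanks to the a priori bounds; the detailed analysis of these is deferred to the appendix \ref{App:Galerkin-est} as the theorem statement indicates.

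The core estimates come from applying the It\^o formula to $|u_n(t)|_\rH^2$. Using the cancellation $\dual{B(u_n,u_n)}{u_n}{}=0$ from \eqref{E:wirowosc_b}, one gets
\[
d|u_n(t)|_\rH^2 + 2\dual{\acal u_n}{u_n}{}\,dt
= \bigl(2\dual{f}{u_n}{} + \Norm{G(u_n)}{\lhs(\rK,\rH)}{2}\bigr)dt
+ 2\ilsk{u_n}{G(u_n)\,dW}{\rH}.
\]
Invoking assumption \eqref{A:G_2'} (the rewritten form of \eqref{E:G}) absorbs $\Norm{G(u_n)}{\lhs(\rK,\rH)}{2}$ against $2\dual{\acal u_n}{u_n}{}=2\Norm{u_n}{}{2}$, leaving $\eta\Norm{u_n}{}{2}$ on the good side plus the terms $\lambda_0|u_n|_\rH^2+\rho$; the forcing term $2\dual{f}{u_n}{}$ is controlled by Young's inequality as $\tfrac{2}{\eta}\norm{f}{\rV'}{2}+\tfrac{\eta}{2}\Norm{u_n}{}{2}$, so that half the dissipation survives. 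Taking expectations (the stochastic integral being a martingale after a localisation argument) yields the $L^2(0,T;\rV)$ bound \eqref{E:V_estimate_NS} and, via Gr\"onwall with the $\lambda_0$ term, the $q=2$ version of \eqref{E:H_estimate_NS}. For general $q\in[2,p]$ one applies It\^o to $|u_n(t)|_\rH^q$, uses the Burkholder--Davis--Gundy inequality on the martingale part together with \eqref{E:G} to estimate the quadratic variation by $\int_0^T|u_n|_\rH^{q}\bigl(\Norm{u_n}{}{2}+|u_n|_\rH^2+1\bigr)\,ds$, and closes the estimate by splitting with Young's inequality so that the $\Norm{u_n}{}{2}$ factor is absorbed into the deterministic dissipation integral already bounded. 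The constraint $p<2+\tfrac{\eta}{2-\eta}$ is exactly what makes this absorption work; the constants then depend on $T,R_1,R_2$ through $\int_\rH|x|^p\mu_0(dx)\le R_1$ and $|f|_{L^p(0,T;\rV')}\le R_2$. The case $q\in[1,2)$ follows from $q=2$ by Jensen/H\"older.

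Having these uniform bounds, I would then show tightness of the laws $\mathcal{L}(u_n)$ on $\mathcal{Z}_T$ using Corollary~\ref{cor-tigthness_criterion}: conditions \eqref{cond-a} and \eqref{cond-b} are the estimates just obtained, and the Aldous condition \eqref{cond-c} in $U'$ is verified by Lemma~\ref{lem-Aldous}, estimating increments of each term in the equation ($\acal u_n$, $B(u_n,u_n)$, $f$, the stochastic integral) in $U'$ or $\rV_s'$ using \eqref{eqn-B-H} and the linear growth \eqref{E:G*}; the H\"older exponent $\sigma$ comes out as $1/2$ for the stochastic part and $1$ for the drift. By Jakubowski's theorem (Theorem~\ref{T:2_Jakubowski}, applicable by Lemma~\ref{lem-Z_T}) I pass to a new probability space with a.s.\ convergent copies $\tilde u_n\to\tilde u$ in $\mathcal{Z}_T$; then one identifies the limit as a martingale solution. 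The convergence of the nonlinear term $B$ is handled through the $L^2(0,T;\rH_{\mathrm{loc}})$ component of $\mathcal{Z}_T$ (this is where condition \eqref{E:G**} enters, to pass to the limit in the noise on an unbounded domain), and the energy inequality \eqref{E:mart_sol_energy-ineq} follows by lower semicontinuity (Lemma~\ref{lem-Kuratowski}) from the uniform bounds, which also transfer to $\tilde u$ and give \eqref{E:H_estimate_NS}--\eqref{E:V_estimate_NS}.

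For part (2), when $\lambda_0=0$ and $\ocal$ is Poincar\'e, the above It\^o computation for $|u_n(t)|_\rH^2$ gives, after absorbing $\Norm{G(u_n)}{\lhs}{2}$ via \eqref{A:G_2'} with $\lambda_0=0$ and the forcing via Young,
\[
\hat{\e}|u_n(t)|_\rH^2 + \tfrac{\eta}{2}\,\hat{\e}\!\int_0^t\!\Norm{u_n(s)}{}{2}\,ds
\le \hat{\e}|u_n(0)|_\rH^2 + \tfrac{2}{\eta}\!\int_0^t\!\norm{f(s)}{\rV'}{2}\,ds + \rho t,
\]
where the key point is that there is no $\lambda_0$-term to feed into Gr\"onwall, so the $|u_n(t)|_\rH^2$ term stays on the left and can simply be dropped. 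Passing to the weak limit along the chosen subsequence (using weak lower semicontinuity of $v\mapsto\hat{\e}\int_0^T\Norm{v}{}{2}\,ds$ on $\mathcal{Z}_T$, again Lemma~\ref{lem-Kuratowski}) gives \eqref{E:Poincare-NS-ineq}. The main obstacle throughout is the passage to the limit in the nonlinear and noise terms on the \emph{unbounded} domain --- without compactness of $\rV\embed\rH$ one must work with the local space $L^2(0,T;\rH_{\mathrm{loc}})$ and condition \eqref{E:G**}, and be careful that a single martingale solution simultaneously satisfies all the claimed estimates (this is why one extracts one subsequence and checks all bounds survive the limit together).
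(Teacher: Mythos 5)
Your proposal is correct and follows essentially the same route as the paper: Galerkin approximation, It\^o's formula for $|u_n|_\rH^p$ with the absorption of the It\^o correction into the dissipation (which is exactly where $p<2+\tfrac{\eta}{2-\eta}$ enters), Burkholder--Davis--Gundy plus the weighted bound on $\int|u_n|^{p-2}\Norm{u_n}{}{2}$ for the supremum estimate, tightness on $\mathcal{Z}_T$ via the Aldous condition, Jakubowski's theorem, and identification of the limit using $L^2(0,T;\rH_{\mathrm{loc}})$ and condition \eqref{E:G**}; part (2) is likewise handled identically. The only cosmetic difference is that the paper applies It\^o once to $|x|_\rH^p$ and obtains the $q<p$ cases by H\"older, rather than running the It\^o argument separately for each $q$.
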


Proof of Theorem \ref{T:existence_NS} is postponed to  \ref{sec:Proof of Theorem_existence_NS}.

\subsection{The continuous dependence}

We prove the following results related to the continuous dependence on the deterministic initial condition and deterministic external forces.
Roughly speaking, we will show that if $({u}_{0,n}) \subset \rH$ and $({f}_{n}) \subset {L}^{p}(0,T;{\rV }^{\prime })$ are sequences of initial conditions and external forces   approaching  ${u}_{0}\in \rH$ and $f\in {L}^{p}(0,T,{\rV }^{\prime })$, respectively, then a sequence $(\un )$  of martingale solutions of the Navier-Stokes equations with the data $({u}_{0,n},{f}_{n})$, satisfying inequalities \eqref{E:H_estimate_NS}-\eqref{E:V_estimate_NS},
contains a subsequence  of solutions, on a changed probability basis,  convergent to a martingale solution  with the initial condition ${u}_{0}$ and the external force $f$. Note that existence of such solutions ${u}_{n}$, $n \in \nat $, is guaranteed by Theorem \ref{T:existence_NS}. This result holds both in 2D and 3D possibly unbounded domains with smooth boundaries.
Moreover, in the case of $2D$ domains, because of the existence and uniqueness of the strong solutions, stronger result holds.
Namely, the solutions ${u}_{n}$, $n \in \nat $, satisfy inequalities \eqref{E:H_estimate_NS}-\eqref{E:V_estimate_NS} and not only a subsequence but the whole sequence of solutions $({u}_{n})$ is convergent to the solution of the Navier-Stokes equation with the data ${u}_{0}$ and $f$.
Their proofs are de facto, modifications of the proofs of corresponding parts of Theorem 5.1 from \cite{Brz+Motyl_2013}, where Galerkin approximations are substituted by solutions $\un $, $n \in \nat $. However, the last part of the proof is different. Namely, contrary to the case of the Galerkin aproximations, the martingale ${\tilde{M}}_{n}$ defined by (5.16) in \cite{Brz+Motyl_2013} is, in general, not square integrable. It would be square integrable, for example, if inequality  \eqref{E:H_estimate_NS} held with some $q>4$.
This holds in the case, when the noise term does not depend on $\nabla u$ or if we impose such restriction on $\eta $ that $\frac{\eta }{2-\eta } >4$. However,  to cover the general case, this part of the proof is different.

In what follows we do not assume that $\mathcal{O}$ is a Poincar\'{e} domain.

\begin{theorem} \label{thm-continuous dependence-tightness}
Let assumptions \textbf{(H.1)}-\textbf{(H.3)}  and \textbf{(H.5)} be satisfied and let $T>0$.
Assume that  $\, \, \big({u}_{0,n}\big)_{n=1}^\infty$ is a bounded  $\rH$-valued  sequence and
$\, {({f}_{n})}_{n=1}^{\infty } \, $ is a bounded  ${L}^{p}(0,T;{\rV }^{\prime })$-valued sequence.
Let ${R}_{1}>0$ and ${R}_{2}>0$ be such that
$\sup_{n \in \nat } \norm{{u}_{0,n}}{\rH}{} \le {R}_{1}$ and $\sup_{n \in \nat } \Norm{{f}_{n}}{{L}^{p}(0,T;{\rV }^{\prime })}{} \le {R}_{2}$.
Let
 \[
  \bigl( \hat{\Omega }_n, \hat{\fcal}_n, \hat{\fmath }_n,\hat{\p }_n ,{\hat{W}}_{n}, u_n \bigr)
\]
be a   martingale solution
of problem \eqref{E:NS} with the initial data ${u}_{0,n}$ and the external force $f_n$
and satisfying inequalities \eqref{E:H_estimate_NS}-\eqref{E:V_estimate_NS}.
Then,  the set of
Borel measures $\bigl\{ \lcal ({u_n}{} ) , n \in \nat  \bigr\} $ is tight on the
space $(\mathcal{Z}_{T} , \mathcal{T}_{T} )$.
\end{theorem}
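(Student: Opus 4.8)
The plan is to verify the hypotheses of the tightness criterion in Corollary~\ref{cor-tigthness_criterion} for the sequence of processes $(\un)_{n\in\nat}$, where each $\un$ is now regarded as a process defined on its own stochastic basis $(\hat{\Omega}_n,\hat{\fcal}_n,\hat{\fmath}_n,\hat{\p}_n)$. Conditions \eqref{cond-a} and \eqref{cond-b} are immediate: they are precisely the uniform bounds \eqref{E:H_estimate_NS} (with $q=2$) and \eqref{E:V_estimate_NS}, and the constants $C_1(p,2)$ and $C_2(p)$ there depend on the data only through $T$, $R_1$ and $R_2$, which are uniform in $n$ by hypothesis. So the whole content of the proof is the Aldous condition \textbf{(a)}, i.e. the estimate \eqref{cond-c} on increments of $\un$ over small time intervals, measured in the $U^\prime$-norm.

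To establish the Aldous condition I would use the sufficient criterion of Lemma~\ref{lem-Aldous} with $Y=U^\prime$ (or, more conveniently, with $Y=\rV_s^\prime$, using the continuous embedding $\rV_s^\prime\embed U^\prime$ from \eqref{eqn-U'}). Writing the equation \eqref{E:mart_sol_int_identity_V'} in integral form, for any stopping time $\taun$ and any $\theta>0$ one has
\[
\un(\taun+\theta)-\un(\taun)=-\int_{\taun}^{\taun+\theta}\acal\un(s)\,ds-\int_{\taun}^{\taun+\theta}B(\un(s))\,ds+\int_{\taun}^{\taun+\theta}f_n(s)\,ds+\int_{\taun}^{\taun+\theta}G(\un(s))\,d\hat{W}_n(s),
\]
and I would estimate each of the four terms in $\rV_s^\prime$ (or $\rV^\prime$) separately. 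For the Stokes term, $\vert\acal\un(s)\vert_{\rV^\prime}\le\Norm{\un(s)}{}{}$, so by Cauchy--Schwarz its contribution is bounded by $\theta^{1/2}(\int_0^T\Norm{\un(s)}{}{2}\,ds)^{1/2}$, whose expectation is $\le C\theta^{1/2}$ by \eqref{cond-b}. For the nonlinear term I would use Lemma~\ref{lem-B-H}, $\vert B(\un(s))\vert_{\rV_s^\prime}\le C\norm{\un(s)}{\rH}{2}$, so its contribution is $\le C\theta\sup_{s\in[0,T]}\norm{\un(s)}{\rH}{2}$, with expectation $\le C\theta$ by \eqref{cond-a}. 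The forcing term contributes $\le\int_{\taun}^{\taun+\theta}\vert f_n(s)\vert_{\rV^\prime}\,ds\le\theta^{1-1/p}\Norm{f_n}{{L}^p(0,T;\rV^\prime)}{}\le R_2\theta^{1-1/p}$, which is deterministic. For the stochastic integral, by the It\^o isometry (in $\rV^\prime$, using the Hilbert--Schmidt bound \eqref{E:G*}) and Jensen's inequality,
\[
\hat{\e}_n\Bigl[\Bigl\vert\int_{\taun}^{\taun+\theta}G(\un(s))\,d\hat{W}_n(s)\Bigr\vert_{\rV^\prime}\Bigr]\le\Bigl(\hat{\e}_n\int_{\taun}^{\taun+\theta}\Norm{G(\un(s))}{\lhs(\rK,\rV^\prime)}{2}\,ds\Bigr)^{1/2}\le\bigl(C\,\hat{\e}_n\int_{\taun}^{\taun+\theta}(1+\norm{\un(s)}{\rH}{2})\,ds\bigr)^{1/2}\le C\theta^{1/2},
\]
again using \eqref{cond-a}. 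Collecting the four bounds, the worst power of $\theta$ is $\theta^{\sigma}$ with $\sigma=\min\{1/2,1-1/p\}\in(0,1]$ (in fact $\sigma=1/2$ since $p\ge2$), which is exactly the form required by Lemma~\ref{lem-Aldous}. Hence $(\un)$ satisfies the Aldous condition \textbf{[A]} in $U^\prime$ on $[0,T]$, and Corollary~\ref{cor-tigthness_criterion} then yields that $\{\lcal(\un):n\in\nat\}$ is tight on $(\mathcal{Z}_T,\mathcal{T}_T)$.

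I expect the main technical nuisance, rather than a deep obstacle, to be bookkeeping the norms: one must make sure all four increment estimates land in a single Hilbert space continuously embedded in $U^\prime$ — here $\rV^\prime$ works for the Stokes, forcing and stochastic terms while the bilinear term naturally lives only in $\rV_s^\prime$, so one uses $\rV_s^\prime$ (with $\rV^\prime\embed\rV_s^\prime\embed U^\prime$) as the common space $Y$ in Lemma~\ref{lem-Aldous}. A secondary point worth a sentence is that the stopping-time argument requires the increments to be controlled uniformly over \emph{all} $[0,T]$-valued $\mathbb{F}_n$-stopping times $\taun$; this is automatic in the above estimates because every bound is in terms of $\sup_{[0,T]}$ or $\int_0^T$ quantities that dominate the corresponding quantity on the random subinterval $[\taun,\taun+\theta]$, and the strong Markov/optional-sampling properties of the $\rV^\prime$-valued martingale $s\mapsto\int_0^sG(\un)\,d\hat{W}_n$ give the It\^o-isometry bound on $[\taun,\taun+\theta]$. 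No integrability beyond the $L^2$-bounds \eqref{cond-a}--\eqref{cond-b} is needed, which is the point of using the first-moment criterion of Lemma~\ref{lem-Aldous} rather than a second-moment one.
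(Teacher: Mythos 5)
Your proposal is correct and follows essentially the same route as the paper: verify \eqref{cond-a}--\eqref{cond-b} from the uniform estimates \eqref{E:H_estimate_NS} (with $q=2$) and \eqref{E:V_estimate_NS}, then check the Aldous condition via Lemma \ref{lem-Aldous} by splitting the $\rV^\prime$-valued integral equation into the Stokes, bilinear, forcing and stochastic terms, estimating them respectively in $\rV^\prime$ or $\rV_s^\prime$ with powers $\theta^{1/2}$, $\theta$, $\theta^{(p-1)/p}$ and $\theta^{1/2}$, and embedding everything into $U^\prime$. The only cosmetic difference is that the paper treats each term in its natural space and uses that the class of processes satisfying the Aldous condition is a vector space, rather than forcing all four bounds into the single space $\rV_s^\prime$ as you suggest; both are valid.
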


\begin{proof}
Let us fix $T>0$ and $p$ satisfying condition \eqref{eqn-p_cond}.
Let   $\, \, \big({u}_{0,n}\big)_{n=1}$ and
$\, \, \big({f}_n\big)_{n=1}$
be  bounded  $\rH $-valued, resp. ${L}^{p}(0,T;{\rV }^\prime )$-valued, sequences.
Let
 \[\bigl( \hat{\Omega }_n, \hat{\fcal}_n, \hat{\fmath }_n,\hat{\p }_n ,{\hat{W}}_{n},u_n \bigr) \]
 be a   corresponding martingale solution  of problem \eqref{E:NS} with the initial data ${u}_{0}^{n}$ and the external force $f_n$, and satisfying inequalities \eqref{E:H_estimate_NS}-\eqref{E:V_estimate_NS}.
Such a solution exists by  Theorem \ref{T:existence_NS}.

To show that the set of measures $\bigl\{ \lcal ({u_n}) , n \in \nat  \bigr\} $
are tight on the space $(\mathcal{Z}_T , \mathcal{T}_T )$, where $\mathcal{Z}_T$ is defined in \eqref{eqn-Z_T}, we argue as in the proof of Lemma 5.4 in \cite{Brz+Motyl_2013} and  apply Corollary \ref{cor-tigthness_criterion}.
We first  observe  that due to estimates  \eqref{E:H_estimate_NS} (with $q=2$) and \eqref{E:V_estimate_NS}, conditions \eqref{cond-a} and \eqref{cond-b}  of Corollary \ref{cor-tigthness_criterion} are satisfied. Thus, it is sufficient to  prove condition (\textbf{a}), i.e. that the sequence $(\un {)}_{n \in \nat }$ satisfies the Aldous condition \textbf{[A]}. By Lemma \ref{lem-Aldous} it is sufficient to proof the condition (\textbf{a'}).

We have now to choose our steps very carefully as we no longer treat strong solutions to an SDE in a finite dimensional Hilbert space but instead a weak solution to  an SPDE in an infinite dimensional Hilbert space.

Let ${(\taun )}_{n \in \nat} $ be a sequence of stopping times taking values in $[0,T]$.
Since each process satisfies equation \eqref{E:mart_sol_int_identity}, by Remark \ref{R:Sol_cont_V'}   we have
\begin{eqnarray*}
& & \un (t) \,
 =    {u}_{0,n}  - \int_{0}^{t}  \acal  \un (s) \, ds
  - \int_{0}^{t} B \bigl( \un (s) \bigr) \, ds
  + \int_{0}^{t}  {f}_{n}(s) \, ds
  + \int_{0}^{t}  G(\un (s)) \, dW(s)  \nonumber \\
& & =:    \Jn{1} + \Jn{2}(t) + \Jn{3}(t) + \Jn{4}(t) + \Jn{5}(t), \qquad t \in [0,T],
\end{eqnarray*}
where the above equality is understood in the space ${\rV }^{\prime }$.
Let us choose and $\theta  >0 $.  It is sufficient to show that each sequence $\Jn{i}$ of processes, $i=1,\cdots,5$ satisfies the sufficient condition (\textbf{a'}) from Lemma \ref{lem-Aldous}.

Obviously the term $\Jn{1}$ which is  constant in time,  satisfies whatever we want. We will only deal with the other terms.
 In fact, we will check that the terms $\Jn{2}, \Jn{4}, \Jn{5}$ satisfy condition (\textbf{a'})
 from Lemma \ref{lem-Aldous} in the space $Y={\rV }^{\prime }$ and the term $\Jn{3}$ satifies this condition in  $Y={\rV}_{s}^{\prime }$ with $s> \frac{d}{2}+1$. Since the embeddings ${\rV}_{s}^{\prime } \subset {U}^{\prime }$ and ${\rV }^{\prime } \subset {U}^{\prime } $ are continuous, we infer that
(\textbf{a'})
from Lemma \ref{lem-Aldous} holds in the space $Y={U}^{\prime }$, as well.

\bigskip \noindent
\textbf{ Ad } ${\Jn{2}}$. \rm  Since the linear operator $\acal :\rV  \to {\rV }^{\prime }$ is bounded, by the H\"older inequality and \eqref{E:V_estimate_NS}, we have
\begin{eqnarray}
\mathbb{E}_{n}\,\bigl[ {| \Jn{2} (\taun + \theta ) - \Jn{2}(\taun ) | }_{{\rV }^{\prime }}  \bigr]
   &\leq&   \mathbb{E}_{n}\,\Bigl[  \int_{\taun }^{\taun + \theta }
{\bigl|  \acal  \un (s) \bigr| }_{{\rV}^{\prime }}  \, ds \biggr]
 \nonumber \\
&
 \le&  {\theta }^{\frac{1}{2}} \Bigl( \mathbb{E}_{n}\,\Bigl[  \int_{0 }^{T }
 \Norm{  \un (s) }{}{2}  \, ds \Bigr] {\Bigr) }^{\frac{1}{2}}
\le   {C}_{2}(p) \cdot {\theta }^{\frac{1}{2}}.
\label{eqn-Jn2}
\end{eqnarray}

\bigskip \noindent
\textbf{Ad } ${\Jn{3}}$. \rm Let $s > \frac{d}{2} +1 $
Similarly, since $B: \rH \times \rH \to {\rV}_{s }^{\prime }$ is bilinear and continuous (and hence bounded so that   the norm $\| B \| $ of $B: \rH \times \rH \to {\rV}_{s }^{\prime }$ is finite),  then by \eqref{E:H_estimate_NS} we have the following estimates
\begin{eqnarray}
& &\mathbb{E}_{n}\,\bigl[ {| \Jn{3} (\taun + \theta ) - \Jn{3}(\taun) | }_{{\rV}_{s}^{\prime }}  \bigr]
 = \mathbb{E}_{n}\,\Bigl[  \bigl| \int_{\taun }^{\taun + \theta }
  B \bigl(\un (r) \bigr) \, dr {\bigr| }_{{\rV}_{s}^{\prime }} \Bigr]
 \le c\mathbb{E}_{n}\,\Bigl[  \int_{\taun }^{\taun + \theta }
 {| B\bigl( \un (r)  \bigr) | }_{{\rV}_{s }^{\prime }} \, dr \Bigr] \nonumber \\
& & \le c \| B \| \, \mathbb{E}_{n}\,\biggl[  \int_{\taun }^{\taun + \theta }
    {| \un (r)|}_{\rH}^{2}   \, dr \biggr]
 \le c\| B \|  \cdot  \mathbb{E}_{n}\,\bigl[ \sup_{r \in [0,T]} {|\un (r)|}_{\rH}^{2}\bigr] \cdot \theta
 \le c\| B \| \,  {C}_{1}(p,2)    \cdot\theta.   \label{E:Jn3}
\end{eqnarray}
\textbf{Remark.} The above argument works as well for $d=3$. However for $d=2$ we have the following different proof which exploits  inequality \eqref{E:estimate_B} (which is valid only the the two dimensional case).
\begin{eqnarray}
& &\mathbb{E}_{n}\,\bigl[ {| \Jn{3} (\taun + \theta ) - \Jn{3}(\taun) |}_{{\rV }^{\prime }}  \bigr]
 \le \mathbb{E}_{n}\,\Bigl[  \int_{\taun }^{\taun + \theta }
{ \bigl|  B\bigl( \un (r)  \bigr)  \bigr| }_{{\rV }^{\prime }} \, dr \Bigr]
\leq c_2 \mathbb{E}_{n}\, \int_{\taun }^{\taun + \theta }
{   \norm{ u_n(r)}{{L}^{2}}{} \,\norm{ \nabla u_n(r)}{{L}^{2}}{} } \, dr
\nonumber \\
& & \leq c_2 \biggl[\mathbb{E}_{n} \,   \sup_{r \in [\taun,\taun + \theta]}  \norm{u_n(r)}{H}{2}  \biggr]^{\frac12}
\biggl[ \mathbb{E}_{n} \,\int_{\taun }^{\taun + \theta }
       \norm{ \nabla u_n(r)}{{L}^{2}}{2} \, dr \biggr]^{\frac12}  {\theta }^{\frac{1}{2}}
     \nonumber  \\
       & & \leq c_2  \biggl[\mathbb{E}_{n} \,   \sup_{r \in [0,T]}  \norm{u_n(r)}{\rH }{2}  \biggr]^{\frac12}
\biggl[ \mathbb{E}_{n} \,\int_{0}^{T}
       \norm{ \nabla u_n(r)}{{L}^{2}}{2} \, dr \biggr]^{\frac12} {\theta }^{\frac{1}{2}}
    \nonumber   \\
&& \le c_2   [{C}_{1}(p,2)]^{\frac{1}{2}} [{C}_{2}(p)]^{\frac{1}{2}}\theta^{\frac12} .
\label{eqn-Jn3}
\end{eqnarray}

\bigskip \noindent
\textbf{Ad } ${\Jn{4}}$. \rm Since the sequence $({f}_{n})$ is weakly convergent in ${L}^{p}(0,T;{\rV }^{\prime })$, it is, in particular, bounded in ${L}^{p}(0,T;{\rV }^{\prime })$. Using the H\"{o}lder inequality,  we have
\begin{eqnarray}
& &\mathbb{E}_{n} \,\bigl[ {| \Jn{4} (\taun + \theta ) - \Jn{4}(\taun) | }_{{\rV }^{\prime }}  \bigr]
 = \mathbb{E}_{n} \,\Bigl[ \bigl| \int_{\taun }^{\taun + \theta }  {f}_{n} (s) \, ds {\bigr| }_{{\rV }^{\prime }} \Bigr]
  \nonumber \\
& & \le    {\theta }^{\frac{p-1}{p}}   \Bigl( \mathbb{E}_{n} \,\Bigl[  \int_{0 }^{T }
{|{f}_{n}(s)|}_{{\rV }^{\prime }}^{p}  \, ds \Bigr] {\Bigr) }^{\frac{1}{p}}
=   {\theta }^{\frac{p-1}{p}}  \norm{{f}_{n}}{{L}^{p}(0,T;{\rV }^{\prime })}{} = {c}_{4} \cdot {\theta }^{\frac{p-1}{p}}, \label{E:Jn4}
\end{eqnarray}
where ${c}_{4}:= \sup_{n \in \nat } \norm{{f}_{n}}{{L}^{p}(0,T;{\rV }^{\prime })}{}$.

\bigskip \noindent
\textbf{Ad } ${\Jn{5}}$. \rm By assumption
\eqref{E:G*} and inequality \eqref{E:H_estimate_NS}, we obtain the following inequalities
\begin{eqnarray}\label{eqn-Jn5}
\mathbb{E}_{n} \,\bigl[ {| \Jn{5} (\taun + \theta ) - \Jn{5}(\taun ) | }_{{\rV }^{\prime }}^{}  \bigr] &\leq &
\Bigl\{ \mathbb{E}_{n} \,\bigl[ {| \Jn{5} (\taun + \theta ) - \Jn{5}(\taun ) | }_{{\rV }^{\prime }}^{2}  \bigr] \Bigr\}^{\frac12}
 \nonumber \\
&=& \Bigl[ \mathbb{E}_{n} \,\int_{\taun }^{\taun + \theta }
 \Norm{ G( \un (s) )}{\lhs (Y ,{\rV }^{\prime })}{2} \, ds  \Bigr]^{\frac12}
    \nonumber \\
& \le &   \Bigl[  C\cdot \mathbb{E}_{n} \,\int_{\taun }^{\taun + \theta }
  ( 1 + | \un (s) {|}_{\rH}^{2} )\, ds  \Bigr]^{\frac12}  \nonumber \\
& \le& \Bigl[ C    \bigl( 1 +
  \Bigl[ \mathbb{E}_{n} \,\bigl[ \sup_{s \in [0,T]} {| \un (s) | }_{\rH}^{2}\bigr] \bigr) \theta \Bigr]^{\frac12} \nonumber \\
&\le & \Bigl[ C (1+ {C}_{1}(2) ) \theta\Bigr]^{\frac12}   =: {c}_{5} \cdot \theta^{\frac{1}{2}} .
\end{eqnarray}
Thus the proof of Theorem \ref{thm-continuous dependence-tightness} is complete.
\end{proof}

\begin{remark}\label{rem-thm-continuous dependence-tightness}
It is easy to be convinced that $u_n$ take values in ${\mathcal{Z}}_{T}$ but it's not so obvious to see that in fact $u_n$ are Borel measurable  functions.
This is so because our construction of the martingale solution is based on Jakubowski's version of the Skorokhod Theorem, see Theorem \ref{T:2_Jakubowski} for details.
\end{remark}

The main result about the continuous dependence of the solutions of the Navier-Stokes equations on the initial state and deterministic external forces, which covers both  cases  of 2D and   3D domains,   is expressed in the following theorem \ref{thm-continuous dependence-existence}. Stronger version for 2D domains will be formulated in the next section, see Theorem \ref{thm-weak continuous dependence-existence_2D}.

\begin{theorem} \label{thm-continuous dependence-existence}
Let conditions \textbf{(H.1)}-\textbf{(H.3)} and \textbf{(H.5)} of Assumption \ref{assumption-main}  be satisfied and  let $T>0$.
Assume that  $\, \, \big({u}_{0,n}\big)_{n=1}^\infty$ is an $\rH$-valued  sequence that is convergent  weakly in $\rH$  to  $\, \, {u}_{0} \in \rH $ and
$\, {({f}_{n})}_{n=1}^{\infty } \, $ is an ${L}^{p}(0,T;{\rV }^{\prime })$-valued sequence that is weakly convergent in ${L}^{p}(0,T;{\rV }^{\prime })$ to $f \in {L}^{p}(0,T;{\rV }^{\prime })$.
Let ${R}_{1}>0$ and ${R}_{2}>0$ be such that
$\sup_{n \in \nat } \norm{{u}_{0,n}}{\rH}{} \le {R}_{1}$ and $\sup_{n \in \nat } \Norm{{f}_{n}}{{L}^{p}(0,T;{\rV }^{\prime })}{} \le {R}_{2}$.
Let
\[
\bigl( \hat{\Omega }_n, \hat{\fcal}_n, \hat{\fmath }_n,\hat{\p }_n  {\hat{W}}_{n}, u_n \bigr)
\]
be a martingale solution
of problem \eqref{E:NS} with the initial data ${u}_{0}^{n}$ and the external force $f_n$  and satisfying inequalities \eqref{E:H_estimate_NS}-\eqref{E:V_estimate_NS}.

Then there exist
\begin{itemize}
\item a subsequence $({n}_{k}{)}_{k}$,
\item a stochastic basis
$\bigl( \tOmega , \tfcal ,  {\tilde{\mathbb{F}}} , \tp  \bigr) $, where $\tilde{\mathbb{F}}={\{ \tfcal {}_{t} \} }_{t \ge 0}$,
\item a cylindrical Wiener process $\tW=\tW (t)$, $t\in [0,\infty)$ defined on this basis,
\item and  progressively measurable processes $\tu$,  $\big(\tunk\big)_{k \ge 1} $ (defined on this basis) with  laws supported in $ \mathcal{Z}_{T} $ such that
\begin{equation}   \label{E:Skorokhod_appl}
  \tunk \mbox{ \it has the same law as } \unk \mbox{ \it on } \mathcal{Z}_{T}
    \mbox{ \it and } \tunk \to \tu \mbox{ \it in } \mathcal{Z}_{T},
    \quad  \tp \mbox{ - \it a.s.}
\end{equation}
\end{itemize}
for every $q\in [1,p]$
\begin{equation} \label{E:tu_H_estimate}
\tilde{\mathbb{E}} \,\bigl[ \,\sup_{ s\in [0,T]  } {|\tu (s)| }_{\rH}^{q}\,\bigr] < \infty ,
\end{equation}
and  the  system
\[\bigl( \tOmega , \tfcal ,  {\tilde{\mathbb{F}}}, \tp , \tW,\tu  \bigr)\]
is   a solution    to  problem  \eqref{E:NS}. \\
In particular,
for all $t\in [0,T]$ and all $\rv \in \rV $
\begin{eqnarray*}
& &\ilsk{\tu (t)}{\rv}{\rH} \,
 - \ilsk{\tu (0)}{\rv}{\rH}  +  \int_{0}^{t} \dual{ \acal \tu (s) }{\rv}{} \, ds
  +   \int_{0}^{t} \dual{ B \bigl( \tu (s)\bigr)}{\rv}{} \, ds   \\
& &\qquad \qquad  =  \int_{0}^{t} \dual{ f(s)}{\rv}{} \, ds
+ \Dual{ \int_{0}^{t} G \bigl( \tu (s) \bigr) \, d \tW (s)}{\rv}{}
\end{eqnarray*}
and
\begin{equation} \label{E:tu_V_estimate}
   \tilde{\mathbb{E}} \,\Bigl[ \,\int_{0}^{T} \Norm{\tu (s)}{}{2}\, ds \,\Bigr] < \infty .
\end{equation}
\end{theorem}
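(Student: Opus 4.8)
The plan is to run the compactness-and-Skorokhod scheme of \cite{Brz+Motyl_2013}, but with the Galerkin approximations replaced by the given martingale solutions $u_n$; two points need genuinely new care, namely that in the unbounded domain $\ocal$ the convergence of the noise coefficients holds only in the weak, local sense \eqref{E:G**}, and that under \eqref{eqn-p_cond} with $\eta$ small the martingale obtained by rearranging \eqref{E:mart_sol_int_identity} need not be square integrable over the sample space. First I would note that the weak convergence of $(u_{0,n})$ in $\rH$ and of $(f_n)$ in $L^p(0,T;\rV')$ makes both sequences bounded, so Theorem~\ref{thm-continuous dependence-tightness} gives tightness of $\{\lcal(u_n):n\in\nat\}$ on $(\mathcal{Z}_T,\mathcal{T}_T)$; moreover, fixing a separable Hilbert space $\rK_1\supset\rK$ with Hilbert--Schmidt embedding, the $\hat W_n$ are $\ccal([0,T];\rK_1)$-valued with one common law, hence tight, so the joint laws of $(u_n,\hat W_n)$ are tight on $\mathcal{Z}_T\times\ccal([0,T];\rK_1)$ --- a space satisfying the hypotheses of Theorem~\ref{T:2_Jakubowski} by Lemma~\ref{lem-Z_T} and separability of $\ccal([0,T];\rK_1)$, Borel measurability of $u_n$ as a $\mathcal{Z}_T$-valued variable being Remark~\ref{rem-thm-continuous dependence-tightness}. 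Jakubowski's theorem then yields a subsequence $(n_k)$, a probability space $(\tOmega,\tfcal,\tp)$, and Borel variables $(\tunk,\tW_{n_k})$, $k\ge1$, and $(\tu,\tW)$ with $\lcal(\tunk,\tW_{n_k})=\lcal(\unk,\hat W_{n_k})$ and $(\tunk,\tW_{n_k})\to(\tu,\tW)$ $\tp$-a.s.\ in $\mathcal{Z}_T\times\ccal([0,T];\rK_1)$; I would take $\tfcal_t$ to be the usual augmentation of $\sigma(\tu(s),\tW(s):s\le t)$ and check, by a routine martingale computation using that each $\tW_{n_k}$ is a $\rK$-cylindrical Wiener process, that $\tW$ is an $\tfcal$-cylindrical Wiener process and that $\tunk,\tu$ are progressively measurable. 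Finally, since by Lemma~\ref{lem-Kuratowski} the maps $u\mapsto\sup_{[0,T]}\norm{u(s)}{\rH}{q}$ and $u\mapsto\int_0^T\Norm{u(s)}{}{2}\,ds$ are Borel on $\mathcal{Z}_T$, equality of laws transfers \eqref{E:H_estimate_NS}--\eqref{E:V_estimate_NS} to $\tunk$, and as these maps are lower semicontinuous along, respectively, the $\ccal([0,T];\rH_w)$- and the weak-$L^2(0,T;\rV)$-convergences built into $\mathcal{Z}_T$, Fatou's lemma gives \eqref{E:tu_H_estimate} and \eqref{E:tu_V_estimate}.

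Next I would verify that each $(\tOmega,\tfcal,\tilde{\mathbb{F}},\tp,\tW_{n_k},\tunk)$ is itself a martingale solution with data $(u_{0,n_k},f_{n_k})$: identity \eqref{E:mart_sol_int_identity} for $v\in\vcal$, together with the energy bound, is the vanishing of a measurable functional of $(u,W,u_0,f)$ which is zero in law for $(\unk,\hat W_{n_k})$, hence also for $(\tunk,\tW_{n_k})$. Then I would pass to the limit in this identity with $v\in\vcal$ and $t\in[0,T]$ fixed. Convergence in $\ccal([0,T];\rH_w)$ gives $\ilsk{\tunk(t)}{v}{\rH}\to\ilsk{\tu(t)}{v}{\rH}$ and, at $t=0$, $\tu(0)=\lim_k u_{0,n_k}=u_0$ $\tp$-a.s. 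Since $v\in\vcal$, the map $u\mapsto\int_0^t\dual{\acal u(s)}{v}{}\,ds$ is a bounded linear functional on $L^2(0,T;\rV)$, so weak convergence $\tunk\to\tu$ there handles the $\acal$-term, while weak convergence of $f_{n_k}$ handles the force term. For the nonlinearity, $\dual{B(\tunk(s))}{v}{}=-\int_\ocal(\tunk(s)\cdot\nabla v)\cdot\tunk(s)\,dx$ depends only on $\tunk(s)$ restricted to the compact set $\supp v$, and $\nabla v\in L^\infty$, so the pathwise convergence $\tunk\to\tu$ in $L^2(0,T;\rH_{\mathrm{loc}})$ (a convergent sequence being bounded) yields $\int_0^t\dual{B(\tunk(s))}{v}{}\,ds\to\int_0^t\dual{B(\tu(s))}{v}{}\,ds$ $\tp$-a.s.; the Gagliardo--Nirenberg bound \eqref{ineq-GNI} with the transferred moment bounds is available if one prefers instead a uniform-integrability / $L^1(\tOmega)$ argument.

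The hard part is the stochastic integral: I must show $\Dual{\int_0^t G(\tunk(s))\,d\tW_{n_k}(s)}{v}{}\to\Dual{\int_0^t G(\tu(s))\,d\tW(s)}{v}{}$ in probability for $v\in\vcal$, and here two difficulties interact. First, in the unbounded domain the only continuity of $G$ at one's disposal is \eqref{E:G**}: for fixed $v\in\vcal$, $u\mapsto (G(u))^\ast v\in\rK'$ is continuous on $\rH_{\mathrm{loc}}$, so $\tunk\to\tu$ in $L^2(0,T;\rH_{\mathrm{loc}})$ forces convergence of the integrands only in this tested sense; combined with the uniform integrability from \eqref{E:G*} and \eqref{E:tu_H_estimate} (with $q=2$), this is nevertheless exactly what lets the quadratic- and cross-variation functionals $t\mapsto\int_0^t\norm{(G(\tunk(s)))^\ast v}{\rK}{2}\,ds$ and $t\mapsto\int_0^t(G(\tunk(s)))^\ast v\,ds$ (paired against a basis of $\rK$) pass to the limit. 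Second --- and this is why the argument of \cite{Brz+Motyl_2013} cannot be copied --- since $|B(u)|_{\rV'}\le c\,\norm{u}{\rH}{}\Norm{u}{}{}$ carries a factor $\sup_{[0,T]}\norm{\tunk}{\rH}{}$, the martingale $\tilde M_{n_k}(t):=\tunk(t)-\tunk(0)+\int_0^t\acal\tunk(s)\,ds+\int_0^t B(\tunk(s))\,ds-\int_0^t f_{n_k}(s)\,ds$ is in general only $L^{p/2}(\tOmega)$-, not $L^2(\tOmega)$-, integrable, so the usual identification of $\tilde M_{n_k}$ with $\int_0^\cdot G(\tunk)\,d\tW_{n_k}$ must be localised: with $\sigma^N_k:=\inf\{t:\norm{\tunk(t)}{\rH}{2}+\int_0^t\Norm{\tunk(s)}{}{2}\,ds>N\}\wedge T$ and its analogue $\sigma^N$ for $\tu$, on $[0,\sigma^N_k]$ everything is square integrable uniformly in $k$; I would pass to the limit in the martingale and (co)variation identities characterising the stochastic integral on $[0,\sigma^N]$ and then let $N\to\infty$, using that $\tp(\sigma^N_k<T)$ and $\tp(\sigma^N<T)$ tend to $0$ uniformly in $k$ by the moment bounds.

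Putting these together, $\tu$ satisfies \eqref{E:mart_sol_int_identity} with $\hat W$ replaced by $\tW$ for all $v\in\vcal$, hence for all $v\in\rV$ by Remark~\ref{R:def-mart-sol-test}; $\tp$-a.s.\ $\tu\in\ccal([0,T];\rH_w)\cap L^2(0,T;\rV)$, i.e.\ \eqref{eqn-regularity of u}; $\tu(0)=u_0$; and \eqref{E:mart_sol_energy-ineq} is \eqref{E:tu_H_estimate} with $q=2$ together with \eqref{E:tu_V_estimate}. Hence $(\tOmega,\tfcal,\tilde{\mathbb{F}},\tp,\tW,\tu)$ is a martingale solution of \eqref{E:NS} with initial datum $u_0$, and \eqref{E:Skorokhod_appl}, \eqref{E:tu_H_estimate}, \eqref{E:tu_V_estimate} together with the displayed integral identity all follow. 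I expect Step~3 to be the real obstacle --- reconciling the localisation forced by the absent fourth moment with the merely weak and local convergence of $G$ imposed by the unboundedness of $\ocal$ --- everything else being, up to bookkeeping, the classical compactness argument; the strengthening to convergence of the whole sequence in the 2D case is a separate matter, handled via pathwise uniqueness and a Gy\"ongy--Krylov argument in Theorem~\ref{thm-weak continuous dependence-existence_2D}.
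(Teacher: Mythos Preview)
Your overall architecture is correct and coincides with the paper's: tightness via Theorem~\ref{thm-continuous dependence-tightness}, Jakubowski--Skorokhod to produce $(\tunk,\tW_{n_k})\to(\tu,\tW)$ on a common space, transfer of the uniform bounds by equality of laws (Lemma~\ref{lem-Kuratowski}), and term-by-term passage to the limit in the tested equation for $\varphi\in\vcal$. Your treatment of the $\acal$-, $B$-, and $f$-terms is essentially the paper's Lemma~\ref{L:convergence_existence}(a)--(e), and your Fatou/lower-semicontinuity derivation of \eqref{E:tu_H_estimate}--\eqref{E:tu_V_estimate} is a legitimate alternative to the paper's weak/weak-star compactness extraction.

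The one genuine divergence is the stochastic integral. You correctly locate the obstruction --- the rearranged martingale $\tilde M_{n_k}$ need not lie in $L^2(\tOmega)$ when $p$ is close to $2$, so the martingale-problem identification used for the Galerkin case in \cite{Brz+Motyl_2013} fails --- but your cure by stopping times $\sigma^N_k$ is a detour, and you do not explain how to exchange the $k$-dependent $\sigma^N_k$ for the limit $\sigma^N$ when passing $k\to\infty$. The paper's resolution is simpler and avoids the martingale-problem route entirely: since Skorokhod already delivers genuine Wiener processes $\tW_{n_k}$, one may work with the stochastic integrals themselves. For fixed $\varphi\in\vcal$, assumption \eqref{E:G**} and the a.s.\ convergence $\tunk\to\tu$ in $L^2(0,T;\rH_{\mathrm{loc}})$ give pointwise convergence of $\|(G(\tunk(s))-G(\tu(s)))^\ast\varphi\|^2_{\lhs(\hat\rK;\mathbb R)}$; the linear-growth bound \eqref{E:G*} together with the $L^{2r}$-moment bound ($2r\le p$) supplies uniform integrability; Vitali then yields $L^1(\tOmega)$-convergence of $\int_0^t\|(G(\tunk)-G(\tu))^\ast\varphi\|^2\,ds$, and the It\^o isometry converts this into $L^2(\tOmega)$-convergence of the stochastic integrals (Lemma~\ref{L:convergence_existence}(f)). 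The point is that the isometry requires only \emph{second} moments of the integrand $G(\tunk)^\ast\varphi$, which follow from \eqref{E:G*} and the $L^2$ sup-bound, never fourth moments of the integral itself. All terms are then matched in $L^1([0,T]\times\tOmega)$ (Corollary~\ref{C:convergence_existence}), and the limiting identity is read off on a full-measure set, then upgraded to all $t$ by weak continuity. Your localisation scheme could presumably be completed, but it is heavier machinery than the problem needs.
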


\begin{proof} Since the product topological space $\zcal_{T} \times \mathcal{C} ([0,T],\rK )$ satisfies
the assumptions of Theorem \ref{T:2_Jakubowski}, by applying  it together with Theorem \ref{thm-continuous dependence-tightness},
 there exists a subsequence $({n}_{k})$, a probability space $(\tOmega , \tfcal ,\tp )$ and $\zcal_{T}\times \mathcal{C} ([0,T],\rK )$-valued Borel random variables $\big( \tilde{u },\tilde{W }\big), \big({\tilde{u }}_{k},{\tilde{W }}_{k}\big) $, $k \in \nat $ such that each  $\tilde{W }$ and $ {\tilde{W }}_{k} $, $k \in \nat $ is  an $\rK$-valued Wiener process and
 such that
 \begin{equation}
 \label{eqn-equal laws}
\mbox{ the laws on } \mathcal{B}(\zcal_{T} \times \mathcal{C} ([0,T],\rK )) \mbox{  of } (u_{n_{k}},W)
\mbox{ and }({\tilde{u}}_{k},{\tilde{W}}_{k}) \mbox{ are equal.}
\end{equation}
where $\mathcal{B}(\zcal_{T} \times \mathcal{C} ([0,T],\rK ))$ is the Borel $\sigma$-algebra on $\zcal_{T} \times \mathcal{C} ([0,T],\rK )$,
and, with $\hat{\rK}$ being an auxiliary Hilbert space such that $\rK \subset \hat{\rK}$ and the natural embedding $\rK \embed \hat{\rK}$ is Hilbert-Schmidt,
\begin{equation}
 \label{eqn-conv-as}
\mbox{$\big({\tilde{u }}_{k},{\tilde{W }}_{k}\big)$ converges to $\big(\tilde{u },\tilde{W }\big)$ in $\zcal_{T}\times \mathcal{C} ([0,T],\hat{\rK})$ \;\;\;$\tp$-almost surely on $\tOmega$.}
\end{equation}
Note that since $\mathcal{B}(\zcal_{T} \times \mathcal{C} ([0,T],\rK )) \subset \mathcal{B}(\zcal_{T}) \times \mathcal{B}(\mathcal{C} ([0,T],\rK ))$,
the function $u$ is $\zcal_{T}$ Borel random variable.

Define a corresponding  sequence of  filtrations by
\begin{equation}\label{eqn-new filtration}
{\tilde{\mathbb{F}}}_{k} =({\tilde{\fcal }}_{k}(t))_{t\geq 0}, \mbox{ where }
{\tilde{\fcal }}_{k}(t) = \sigma \big(\{ \big({\tilde{u }}_{k} (s),{\tilde{W }}_{k}(s)\big), \, \, s \le t \}\big),\;\;  t \in [0,T].
\end{equation}

\noindent
To conclude the proof,  we need to show that the random variable $\tilde{u}$ gives rise to a martingale solution. The proof of this claim is very similar  to the proof of Theorem 2.3 in \cite{EM_2014}.
Let us denote the subsequence $(\tunk{)}_{k}$ again by $(\tun {)}_{n}$.

The few differences are:
\begin{trivlist}
\item[(i)] The finite dimensional space $H_n$ is replaced by the whole space $\rH$. But now, by Lemma \ref{lem-Kuratowski}  the space
$\ccal ([0,T];{\rV }^{\prime }) \cap \mathcal{Z}_T$ is a Borel subset of $\mathcal{Z}_T$
and since by Remark \ref{R:Sol_cont_V'} $\, \un \in \ccal ([0,T];{\rV }^{\prime })$, $\p $-a.s. and $\tun $ and $\un $ have the same laws on $\mathcal{Z}_{T}$,
    we infer that
\begin{equation*}
  \tun \in  \ccal ([0,T]; {\rV }^{\prime })  \qquad n \ge 1, \;\;\;  \tp\mbox{-a.s.}
\end{equation*}
    \item[(ii)] The  operator $P_n$ has to be replaced by the identity. But this is rather a simplification as  for instance we do not need Lemmas 2.3 and 2.4 from \cite{Brz+Motyl_2013}.
\end{trivlist}

\bigskip  \noindent
In addition to point (i) above, we have that  for  every $q\in [1,p]$, we have
 \begin{equation} \label{E:H_estimate'}
 \sup_{n\in \nat }\; \te\,\; \bigl( \sup_{0\le s\le T } {|\tun (s)|}_{\rH}^{q}\bigr) \le
 {C}_{1}(p,q),
\end{equation}
Similarly,
\[
  \tun \in  L^2(0,T;\rV )  \qquad n \ge 1, \;\;\;  \p\mbox{-a.s.}
\]
and
\begin{equation} \label{E:V_estimate'}
  \sup_{n\in \nat } \tilde{\mathbb{E}}\, \Bigl[ \int_{0}^{T} {\| \tun (s)\| }_{\rV }^{2}\, ds \Bigr]
  \le {C}_{2}(p).
\end{equation}
By inequality \eqref{E:V_estimate'} we infer that the sequence $(\tun )$ contains a subsequence, still denoted by $(\tun )$, convergent weakly  in the space ${L}^{2}([0,T]\times \tOmega ; \rV  )$.
Since by \eqref{eqn-conv-as}  $\tp $-a.s. $\tun \to \tu $ in $\zcal_{T}$, we conclude that
$\tu \in {L}^{2}([0,T]\times \tOmega ; \rV  )$, i.e.
\begin{equation} \label{E:tu_V_estimate'}
   \tilde{\mathbb{E}}\, \Bigl[ \int_{0}^{T} \norm{\tu (s)}{}{2}\, ds \Bigr] < \infty .
\end{equation}
Similarly, by inequality \eqref{E:H_estimate'} with $q=p$ we can choose a subsequence of $(\tun )$ convergent weak star in the space ${L}^{p}(\tOmega ; {L}^{\infty }(0,T;\rH ))$ and, using \eqref{eqn-conv-as}, infer that
\begin{equation} \label{E:tu_H_estimate'}
\tilde{\mathbb{E}}\, \bigl[ \sup_{0\le s\le T } {|\tu (s)|}_{\rH}^{p}\bigr] < \infty .
\end{equation}
Then, of course, for every  $q\in [1,p]$,
\begin{equation} \label{E:tu_H_estimate_q'}
\tilde{\mathbb{E}}\, \bigl[ \sup_{0\le s\le T } {|\tu (s)|}_{\rH}^{q}\bigr] < \infty .
\end{equation}

The remaining proof will be done in two steps.

\noindent
\bf Step 1. \rm
Let us choose and  fix $s>\frac{d}{2}+1$.  We will first prove the following Lemma.

\begin{lemma}  \label{L:convergence_existence}
For all $\varphi \in {\rV}_{s}$
\begin{itemize}
\item[(a)] $\lim_{n\to \infty } \tilde{\e } \bigl[ \int_{0}^{T} {|\ilsk{{\tilde{u}}_{n}(t)-\tilde{u}(t)}{\varphi}{\rH }|}^{2} \, dt \bigr] =0 $,
\item[(b)] $\lim_{n\to \infty } \tilde{\e } \bigl[ {|\ilsk{{\tilde{u}}_{n}(0)-\tilde{u}(0)}{\varphi}{\rH}|}^{2}  \bigr] =0 $,
\item[(c)] $\lim_{n\to \infty } \tilde{\e } \bigl[ \int_{0}^{T}
 \bigl| \int_{0}^{t}\dual{ \acal {\tilde{u}}_{n}(s)-\acal \tilde{u}(s)}{\varphi}{} \,ds \bigr| \, dt \bigr] =0 $,
\item[(d)] $\lim_{n\to \infty } \tilde{\e } \bigl[ \int_{0}^{T}
 \bigl| \int_{0}^{t}\dual{ B ({\tilde{u}}_{n}(s))- B(\tilde{u}(s))}{\varphi}{} \,ds \bigr| \, dt \bigr] =0 $,
\item[(e)] $\lim_{n\to \infty } \tilde{\e } \bigl[ \int_{0}^{T}
 \bigl| \int_{0}^{t}\dual{ {f}_{n}(s)-f(s)}{\varphi}{} \,ds \bigr| \, dt \bigr] =0 $,
\item[(f)] $\lim_{n\to \infty } \tilde{\e } \bigl[ \int_{0}^{T}
 \bigl| \dual{ \int_{0}^{t} [ G({\tilde{u}}_{n}(s)) - G(\tilde{u}(s))] \, d \tilde{W}(s) }{\varphi}{}
 {\bigr| }^{2} \, dt \bigr] =0 $.
\end{itemize}
\end{lemma}

\begin{proof}[Proof of Lemma \ref{L:convergence_existence}] Let us fix $\varphi \in {\rV}_{s} $.
\bf Ad  (a). \rm
Since by \eqref{eqn-conv-as}
$ {\tilde{u}}_{n} \to \tilde{u} $ in $\mathcal{C} ([0,T];{\rH}_{w})$ $\tilde{\p }$-a.s.,
$\ilsk{{\tilde{u}}_{n}(\cdot )}{\varphi}{\rH} \to \ilsk{\tilde{u}(\cdot )}{\varphi}{\rH} $
in $\mathcal{C} ([0,T];\rzecz )$, $\tilde{\p }$-a.s. Hence, in particular, for  all $t \in [0,T]$
\[
  \lim_{n \to \infty } \ilsk{{\tilde{u}}_{n}(t)}{\varphi}{\rH} = \ilsk{\tilde{u}(t)}{\varphi}{\rH},
  \qquad  \mbox{ $\tilde{\p }$-a.s. }
\]
Since by \eqref{E:H_estimate'},
 $\sup_{t\in [0,T]} {|{\tilde{u}}_{n}(t)|}_{\rH}^{2} < \infty $, $\tilde{\p }$-a.s.,
using the dominated convergence theorem we infer that
\begin{equation} \label{E:Vitali_{u}_{n}_conv}
  \lim_{n \to \infty } \int_{0}^{T} {| \ilsk{{\tilde{u}}_{n}(t) -\tilde{u}(t)}{\varphi}{\rH} |}^{2} \, dt =0  \qquad  \mbox{ $\tilde{\p }$-a.s. }.
\end{equation}
By the H\"{o}lder inequality and \eqref{E:H_estimate'}
for every $n \in \nat$ and every $r\in \bigl( 1, 1+ \frac{p}{2}\bigr] $
\begin{eqnarray}
&&  \tilde{\e }\Bigl[ \Bigl|  \int_{0}^{T} {|{\tilde{u}}_{n}(t) -\tilde{u}(t) | }_{\rH}^{2} \, dt
 {\Bigr| }^{r}\Bigr]
  \le c \tilde{\e }\Bigl[  \int_{0}^{T} \bigl( {| {\tilde{u}}_{n}(t)  | }_{\rH}^{2r}
+ {| \tilde{u}(t)| }_{\rH}^{2r} \bigr) \, dt \Bigr]   \le \tilde{c} {C}_{1}(p,2r) ,
 \label{E:Vitali_{u}_{n}_est}
\end{eqnarray}
where $c, \tilde{c}$ are some positive constants.
To conclude the proof of
 assertion (a) it is sufficient to use \eqref{E:Vitali_{u}_{n}_conv}, \eqref{E:Vitali_{u}_{n}_est} and the Vitali Theorem.

 \bigskip  \noindent
\bf Ad  (b). \rm
Since by \eqref{eqn-conv-as} ${\tilde{u}}_{n} \to \tilde{u}$ in $\mathcal{C} (0,T;{\rH }_{w})$ $\tilde{\p }$-a.s. and $\tilde{u}$ is continuous at $t=0$, we infer that
$ \ilsk{{\tilde{u}}_{n}(0)}{\varphi}{\rH } \to \ilsk{\tilde{u}(0)}{\varphi}{\rH } $, $\tilde{\p }$-a.s.
Now, assertion (b) follows from \eqref{E:H_estimate'} and the Vitali Theorem.

\bigskip  \noindent
\bf Ad  (c). \rm
Since by \eqref{eqn-conv-as} ${\tilde{u}}_{n} \to \tilde{u}$ in ${L}_{w}^{2}(0,T;\rV )$,
$\tilde{\p }$-a.s.,
 by   \eqref{E:Acal_ilsk_Dir} we infer that $\tilde{\p }$-a.s.
\begin{equation}
 \lim_{n\to \infty } \int_{0}^{t} \dual{ \acal {\tilde{u}}_{n}(s)}{\varphi}{} \, ds
 = \lim_{n \to \infty } \int_{0}^{t} \dirilsk{{\tilde{u}}_{n}(s)}{ \varphi}{} \, ds %\nonumber \\
 = \int_{0}^{t} \dirilsk{ \tilde{u}(s)}{\varphi}{} \, ds
 = \int_{0}^{t} \dual{ \acal \tilde{u}(s)}{\varphi}{} \, ds \qquad \mbox{}
 \label{E:Vitali_{A}_{n}_conv}
\end{equation}
By   \eqref{E:Acal_ilsk_Dir}, the H\"{o}lder inequality and estimate \eqref{E:V_estimate'} we infer that
 for all $t \in [0,T]$ and  $n \in \nat $
\begin{eqnarray}
&&\hspace{-2truecm}\lefteqn{  \tilde{\e }\Bigl[ \Bigl | \int_{0}^{t}  \dual{  \acal {\tilde{u}}_{n}(s)}{\varphi}{} \, ds {\Bigr| }^{2} \Bigr]
  = \tilde{\e } \Bigl[ \Bigl | \int_{0}^{t} \dirilsk{{\tilde{u}}_{n}(s)}{ \varphi}{} \, ds {\Bigr| }^{2} \Bigr]}
   \nonumber \\
& \le & c \, \| { \varphi}{\| }_{{\rV}_{s} }^{2} \,  \tilde{\e }
 \Bigl[  \int_{0}^{T} \Norm{ {\tilde{u}}_{n}(s)}{\rV  }{2} \, ds  \Bigr]
  \le \tilde{c} {C}_{2}(p) ,
  \label{E:Vitali_{A}_{n}_est}
\end{eqnarray}
where  $c, \tilde{c}>0$ are some constants.
By \eqref{E:Vitali_{A}_{n}_conv}, \eqref{E:Vitali_{A}_{n}_est} and the Vitali Theorem we conclude that for all $t \in [0,T]$
\[
  \lim_{n\to \infty } \tilde{\e } \Bigl[ \bigl| \int_{0}^{t}
\dual{\acal {\tilde{u}}_{n}(s)-\acal \tilde{u}(s)}{\varphi}{} \, ds
 {\bigr| }^{} \Bigr] =0.
\]
Assertion (c) follows now from \eqref{E:V_estimate'} and the dominated convergence theorem.

\bigskip  \noindent
\bf Ad  (d). \rm
Since by \eqref{E:V_estimate'} and \eqref{E:norm_V} the sequence $({\tilde{u}}_{n})$ is bounded in ${L}^{2}(0,T;\rH )$ and
by \eqref{eqn-conv-as} ${\tilde{u}}_{n} \to \tilde{u}$ in
${L}^{2}(0,T;{\rH}_{\mbox{\rm {loc}}}) $, $\tilde{\p }$-a.s.,
by Lemma B.1 in \cite{Brz+Motyl_2013} we infer that $\tilde{\p }$-a.s.
for all $t \in [0,T]$ and  $\varphi \in {\rV}_{s}$
\begin{equation}
 \lim_{n\to \infty } \int_{0}^{t} \dual{ B ({\tilde{u}}_{n}(s)) - B ({\tilde{u}}(s))}{\varphi}{} \, ds =0 .  \label{E:Vitali_{B}_{n}_conv}
\end{equation}
Using the H\"{o}lder inequality,  Lemma \ref{lem-B-H}  and  \eqref{E:H_estimate'} we infer that for all
 $t \in [0,T]$,  $r\in \bigl(0,\frac{p }{2}\bigr] $ and  $n \in \nat $ the following inequalities hold
\begin{eqnarray}
&&  \tilde{\e } \Bigl[ \Bigl| \int_{0}^{t} \dual{B ({\tilde{u}}_{n}(s)) }{\varphi}{} \, ds {\Bigr| }^{1+r}  \Bigr]
\le \tilde{\e }  \Bigl[ \Bigl( \int_{0}^{t} {|B ({\tilde{u}}_{n}(s))| }_{{\rV}_{s}^{\prime }}  \norm{\varphi}{{\rV}_{s}}{} \, ds {\Bigr) }^{1+r}  \Bigr] \nonumber \\
&&  \le ({c}_{2}\norm{\varphi}{{\rV}_{s}}{} {)}^{1+r} \, {t}^{r} \,
 \e \Bigl[ \int_{0}^{t} {| {\tilde{u}}_{n}(s) | }_{\rH }^{2+2r} \, ds \Bigr]
 \le \tilde{C} \tilde{\e } \bigl[ \sup_{s\in [0,T]} {|{\tilde{u}}_{n}(s) | }_{\rH }^{2+2r}   \bigr]
  \le \tilde{C} {C}_{1}(p,2+2r).  \label{E:Vitali_{B}_{n}_est}
\end{eqnarray}
By \eqref{E:Vitali_{B}_{n}_conv}, \eqref{E:Vitali_{B}_{n}_est} and  the Vitali Theorem we
obtain for all $t\in [0,T]$
\begin{equation} \label{E:Lebesque_{B}_{n}_conv}
  \lim_{n \to \infty } \tilde{\e } \Bigl[ \bigl| \int_{0}^{t}
 \dual{ B ({\tilde{u}}_{n}(s)) - B(\tilde{u}(s))}{\varphi}{} \, ds {\bigr| }^{}  \Bigr] =0 .
\end{equation}
Using again Lemma \ref{lem-B-H}  and estimate \eqref{E:H_estimate'}, we obtain  for all $t \in [0,T]$ and  $n \in \nat $
\[
   \tilde{\e } \Bigl[ \bigl| \int_{0}^{t} \dual{ B ({\tilde{u}}_{n}(s)) }{\varphi}{} \, ds {\bigr| }^{} \Bigr] \le c \tilde{\e } \bigl[ \sup_{s\in [0,T]} {|{\tilde{u}}_{n}(s))| }_{\rH }^{2}   \bigr]
 \le c {C}_{1}(p,2) ,
\]
where $c>0$ is a  constant. Hence
by \eqref{E:Lebesque_{B}_{n}_conv} and the dominated convergence theorem, we infer that
assertion (d) holds.

\bigskip  \noindent
\bf Ad  (e). \rm
Assertion (e) follows because the sequence $({f}_{n})$ converges weakly in ${L}^{p}(0,T;{\rV }^{\prime })$ to $f$ and ${\rV }_{s} \subset \rV $.

\bigskip  \noindent
\bf Ad  (f). \rm
Let us notice  that for all $\varphi \in \rV  $ we have
\begin{align*}
&  \int_{0}^{t}  {\| \dual{G( {\tilde{u}}_{n}(s))- G(\tilde{u}(s))}{\varphi}{}
  \| }^{2}_{\lhs (\hat{\rK };\rzecz )} \, ds  \\
 & = \int_{0}^{t}  {\|  {\varphi}^{\ast \ast }G({\tilde{u}}_{n})(s) - {\varphi}^{\ast \ast }G(\tilde{u})(s)
    \| }^{2}_{\lhs (\hat{\rK };\rzecz )} \, ds
 \le \Norm{{\varphi}^{\ast \ast }G({\tilde{u}}_{n}) - {\varphi}^{\ast \ast }G(\tilde{u})}{{L}^{2}([0,T];\lhs (\hat{\rK };\rzecz )) }{2} ,
 \end{align*}
where   ${\varphi}^{\ast \ast }G$ is the map defined by \eqref{E:G**} in assumption \textbf{(H.2)}.
Since by \eqref{eqn-conv-as}
${\tilde{u}}_{n} \to \tilde{u}$ in ${L}^{2}(0,T;{\rH}_{\mathrm{loc}}) $, $\tilde{\p }$-a.s.,
by \eqref{E:G**} we infer that for all $t\in [0,T]$ and  $\varphi \in \rV  $
\begin{align}
&  \lim_{n \to \infty } \int_{0}^{t} {\| \dual{G({\tilde{u}}_{n}(s))- G(\tilde{u}(s))}{\varphi}{}
  \| }^{2}_{\lhs (\hat{\rK };\rzecz )} \, ds =0  .
   \label{E:Vitali_{Gaussian}_{n}_conv}
\end{align}
By \eqref{E:G*} and \eqref{E:H_estimate'} we obtain the following inequalities
for every $t \in [0,T]$, $r \in \bigl( 1,1+ \frac{p }{2} \bigr] $ and  $n \in \nat $
\begin{align}
& \tilde{\e } \Bigl[  \bigl| \int_{0}^{t} {\| \dual{G( {\tilde{u}}_{n}(s))- G(\tilde{u}(s))}{\varphi}{}
  \| }^{2}_{\lhs (\hat{\rK };\rzecz )} \, ds {\bigr| }^{r} \Bigr]  \nonumber \\
&  \le  c \, \tilde{\e } \Bigl[ \norm{\varphi}{\rV  }{2r} \cdot \int_{0}^{t} \bigl\{
  \norm{G( {\tilde{u}}_{n}(s))}{\lhs (\hat{\rK };{\rV  }^{\prime })}{2r}
  + \norm{G( \tilde{u}(s))}{\lhs (\hat{\rK };{\rV  }^{\prime })}{2r} \bigr\} \, ds   \Bigr] \nonumber \\
&  \le  {c}_{1} \, \tilde{\e } \Bigl[ \int_{0}^{T} (1+ {|{\tilde{u}}_{n}(s)|}_{\rH }^{2r}
    + {|\tilde{u}(s)|}_{\rH }^{2r}) \, ds  \Bigr] \nonumber  \\
&   \le  \tilde{c} \Bigl\{ 1+ \tilde{\e } \Bigl[ \sup_{s\in [0,T]} {|{\tilde{u}}_{n}(s)|}_{\rH }^{2r}
    + \sup_{s\in [0,T]} {|\tilde{u}(s)|}_{\rH }^{2r})  \Bigr] \Bigr\} \le \tilde{c} (1+2 {C}_{1}(p,2r))
 \label{E:Vitali_{Gaussian}_{n}_est}
\end{align}
where $c,{c}_{1},\tilde{c}$ are some positive constants. Using the Vitali
theorem, by \eqref{E:Vitali_{Gaussian}_{n}_conv}, \eqref{E:Vitali_{Gaussian}_{n}_est} we infer that
for all  $\varphi \in \rV $
\begin{equation}
\lim_{n\to \infty } \tilde{\e } \Bigl[  \int_{0}^{t}
 {\| \dual{G( {\tilde{u}}_{n}(s))- G(\tilde{u}(s))}{\varphi}{} \| }^{2}_{\lhs (\hat{ \rK };\rzecz )} \, ds
  \Bigr] =0  .  \label{E:Lebesgue_{Gaussian}_{n}_conv_V}
\end{equation}
Hence by the properties of the It\^{o} integral we infer that for all $t \in [0,T]$ and  $\varphi \in {\rV }_{} $
\begin{equation}  \label{E:Lebesque_{Gaussian}_{n}_conv}
\lim_{n\to \infty } \tilde{\e } \Bigl[ \bigl|
 \Dual{\int_{0}^{t}\bigl[  G( {\tilde{u}}_{n}(s))- G(\tilde{u}(s)) \bigr] \, d\tilde{W}(s) }{\varphi}{}
 {\bigr| }^{2} \Bigr] =0 .
\end{equation}
By the It\^{o} isometry, since the map $G$ satisfies inequality  \eqref{E:G*} in part \textbf{(H.2)} of  Assumption \ref{assumption-main}, and estimate
\eqref{E:H_estimate'} we have for all $\varphi \in {\rV}_{} $, $t\in [0,T]$ and  $n \in \nat $
\begin{align}
&  \tilde{\e } \Bigl[ \bigl|
 \Dual{\int_{0}^{t}\bigl[ G( {\tilde{u}}_{n}(s))- G(\tilde{u}(s)) \bigr] \, d\tilde{W}(s) }{\varphi}{}
 {\bigr| }^{2} \Bigr] \nonumber \\
&   = \tilde{\e } \Bigl[\int_{0}^{t} {\| \dual{ G( {\tilde{u}}_{n}(s))- G(\tilde{u}(s))}{\varphi}{}
  \| }^{2}_{\lhs (\hat{\rK };\rzecz )} \, ds \Bigr] \nonumber \\
&  \le  c \Bigl\{ 1+ \tilde{\e } \Bigl[ \sup_{s\in [0,T]}  {|{\tilde{u}}_{n}(s)|}_{\rH }^{2}
    + \sup_{s\in [0,T]} {|\tilde{u}(s)|}_{\rH }^{2})  \Bigr] \Bigr\} \le c (1+2 {C}_{1}(p,2)) ,
 \label{E:Lebesque_{Gaussian}_{n}_est}
\end{align}
where $c>0$ is some constant. Thus
by \eqref{E:Lebesque_{Gaussian}_{n}_conv}, \eqref{E:Lebesque_{Gaussian}_{n}_est} and the Lebesgue Dominated Convergence Theorem we infer that for all $\varphi \in {\rV }_{} $
\begin{eqnarray}
 \lim_{n\to \infty } \int_{0}^{T}\tilde{\e } \Bigl[ \bigl|
 \Dual{\int_{0}^{t}\bigl[  G( {\tilde{u}}_{n}(s))- G(\tilde{u}(s)) \bigr]
 \, d\tilde{W}(s) }{\varphi}{}
 {\bigr| }^{2} \Bigr] =0.  \label{E:{Gaussian}_{n}_conv}
\end{eqnarray}
To conclude the proof of assertion (f), it is sufficient to notice that
since $s>\frac{d}{2}+1$, ${\rV }_{s} \subset \rV $ and thus \eqref{E:{Gaussian}_{n}_conv} holds for  all $\varphi \in {\rV }_{s}$.
The proof of Lemma \ref{L:convergence_existence} is thus complete.
\end{proof}

As a direct consequence of  Lemma \ref{L:convergence_existence} we get the following corollary which we precede by introducing some auxiliary notation.
Analogously to \cite{Brzezniak_Hausenblas_2010} and \cite{EM_2014}, let us denote
\begin{eqnarray}
 & & {\Lambda }_{n}({\tilde{u}}_{n},{\tilde{W}}_{n}, \varphi) (t)
:=  \ilsk{{\tilde{u}}_{n}(0)}{\varphi}{\rH }
  - \int_{0}^{t} \dual{\acal {\tilde{u}}_{n}(s)}{\varphi}{}  ds
- \int_{0}^{t} \dual{B({\tilde{u}}_{n}(s))}{\varphi}{}  ds  \nonumber \\
& & + \int_{0}^{t} \dual{ {f}_{n}(s)}{\varphi}{}\, ds
  + \Dual{\int_{0}^{t} G({\tilde{u}}_{n}(s))\,  d{\tilde{W}}_{n}(s)}{\varphi}{} , \quad t \in [0,T] ,
\label{E:Lambda_n_bar_u_n}
\end{eqnarray}
and
\begin{align}
 &  \Lambda  (\tilde{u}, \tilde{W}, \varphi) (t)
:=  \ilsk{\tilde{u}(0)}{\varphi}{\rH}
    - \int_{0}^{t} \dual{ \acal \tilde{u}(s)}{\varphi}{}  ds
 - \int_{0}^{t} \dual{ B(\tilde{u}(s))}{\varphi}{}  ds  \nonumber \\
 & + \int_{0}^{t} \dual{ f(s)}{\varphi}{}\, ds
    +  \Dual{\int_{0}^{t}G(\tilde{u}(s))\,  d\tilde{W}(s)}{\varphi}{}  ,
\quad t \in [0,T] .
\label{E:Lambda_u*}
\end{align}

\begin{cor} \label{C:convergence_existence}
For every $\varphi \in {\rV}_{s}$,
\begin{equation} \label{E:bar_u_n_convergence}
 \lim_{n \to \infty } \norm{\ilsk{{\tilde{u}}_{n}(\cdot )}{\varphi}{\rH }
 - \ilsk{\tilde{u}(\cdot )}{\varphi}{\rH } }{{L}^{2}([0,T]\times \tilde{\Omega })}{} =0
\end{equation}
and
\begin{equation} \label{E:Lambda_n_bar_u_n_convergence}
     \lim_{n \to \infty } \norm{  {\Lambda}_{n}({\tilde{u}}_{n}, {\tilde{W}}_{n},\varphi)
 - \Lambda  (\tilde{u},\tilde{W}, \varphi) }{{L}^{1}([0,T]\times \tilde{\Omega })}{} =0 .
\end{equation}
\end{cor}

\begin{proof}[Proof of Corollary \ref{C:convergence_existence}]
Assertion  \eqref{E:bar_u_n_convergence} follows from the  equality
\[
 \norm{\ilsk{{\tilde{u}}_{n}(\cdot )}{\varphi}{\rH }
 - \ilsk{\tilde{u}(\cdot )}{\varphi}{\rH } }{{L}^{2}([0,T]\times \tilde{\Omega })}{2} \\
=  \tilde{\e } \Bigl[ \int_{0}^{T}
{| \ilsk{{\tilde{u}}_{n}(t) -\tilde{u}(t)}{\varphi}{\rH } |}^{2} \, dt \Bigr]
\]
and Lemma \ref{L:convergence_existence}  (a).
Let us move to the proof of assertion \eqref{E:Lambda_n_bar_u_n_convergence}.
Note that by the Fubini theorem, we have
\begin{align*}
&  \norm{  {\Lambda  }_{n}({\tilde{u}}_{n}, {\tilde{W}}_{n},\varphi)
 - \Lambda  (\tilde{u},\tilde{W}, \varphi) }{{L}^{1}([0,T]\times \tilde{\Omega })}{} \\
& = \int_{0}^{T} \tilde{\e } \bigl[ {| {\Lambda  }_{n}({\tilde{u}}_{n}, {\tilde{W}}_{n},\varphi)(t)
 - \Lambda  (\tilde{u}, \tilde{W} ,\varphi)(t) |}^{}\, \bigr] dt .
 \end{align*}
To conclude the proof of Corollary \ref{C:convergence_existence} it is sufficient to note that
by Lemma \ref{L:convergence_existence} (b)-(f),  each  term on the right hand side of
\eqref{E:Lambda_n_bar_u_n} tends at least in ${L}^{1}([0,T]$ $\times \tilde{\Omega })$ to the corresponding term in \eqref{E:Lambda_u*}.
\end{proof}
\noindent

\bigskip  \noindent
\bf Step 2. \rm Since ${u}_{n}$ is a solution of the Navier-Stokes equation, for all $t\in [0,T]$ and $\varphi \in \vcal $
\[
   \ilsk{{u}_{n}(t)}{\varphi}{\rH } = {\Lambda  }_{n} ({u}_{n},{W}_{},\varphi)(t) , \qquad \p \mbox{-a.s.}
\]
In particular,
\[
  \int_{0}^{T} \e \bigl[ {|  \ilsk{\un (t)}{\varphi}{\rH }
 - {\Lambda  }_{n} ({u}_{n},{W}_{},\varphi)(t) |}^{} \, \bigr] \, dt  =0.
\]
Since $\lcal ({u}_{n},{W}_{}) = \lcal ({\tilde{u}}_{n},{\tilde{W}}_{n})$,
\[
  \int_{0}^{T} \tilde{\e } \bigl[ {| \ilsk{{\tilde{u}}_{n}(t)}{\varphi}{\rH }
- {\Lambda  }_{n} ({\tilde{u}}_{n},{\tilde{W}}_{n},\varphi)(t) |}^{} \, \bigr] \, dt  =0.
\]
Moreover, by \eqref{E:bar_u_n_convergence} and \eqref{E:Lambda_n_bar_u_n_convergence}
\[
\int_{0}^{T} \tilde{\e } \bigl[ {|
\ilsk{\tilde{u}(t)}{\varphi}{\rH } - {\Lambda  }_{} (\tilde{u},\tilde{W},\varphi)(t) |}^{} \, \bigr] \, dt  =0.
\]
Hence for $l$-almost all $t \in [0,T]$ and $\tilde{\p }$-almost all $\omega \in \tilde{\Omega }$
\[
\ilsk{\tilde{u}(t)}{\varphi}{\rH } - {\Lambda  }_{} (\tilde{u},\tilde{W},\varphi )(t)=0,
\]
i.e. for $l$-almost all $t \in [0,T]$ and $\tilde{\p }$-almost all $\omega \in \tilde{\Omega }$
\begin{align}
& \ilsk{\tilde{u}(t)}{\varphi}{\rH }
+\int_{0}^{t} \dual{\acal \tilde{u}(s)}{\varphi}{} \, ds
+ \int_{0}^{t} \dual{B(\tilde{u}(s))}{\varphi}{} \, ds
  \nonumber \\
 & =  \ilsk{\tilde{u}(0)}{\varphi}{\rH }
+\int_{0}^{t} \dual{f(s)}{\varphi}{}\, ds
 + \Dual{ \int_{0}^{t} G(\tilde{u}(s)) \, d \tilde{W}(s) }{\varphi}{} . \label{E:solution}
\end{align}
Since a Borel $\tilde{u}$ is ${\zcal }_{T}$-valued random variable, in particular $\tilde{u}\in \mathcal{C} ([0,T];{\rH }_{w})$, i.e. $\tilde{u}$ is weakly continuous,
we infer that
equality \eqref{E:solution} holds  for all $t\in [0,T]$ and all $\varphi \in \vcal  $.
Since $\vcal  $ is dense in $\rV $,   equality \eqref{E:solution} holds for all $\varphi \in \rV $, as well.
Putting $\tilde{\mathfrak{A}}:=(\tilde{\Omega }, \tilde{\fcal },\tilde{\p }, \tilde{\fmath })$,
 we infer that the system
$(\tilde{\mathfrak{A}}, \tilde{W}, \tilde{u})$ is a martingale solution of  equation
\eqref{E:NS}.
By \eqref{E:tu_V_estimate'} and \eqref{E:tu_H_estimate_q'} the process $\tilde{u}$ satisfies inequalities
\eqref{E:tu_V_estimate} and \eqref{E:tu_H_estimate}.
The proof of Theorem \ref{thm-continuous dependence-existence} is thus complete.
\end{proof}

\begin{remark}\label{rem-V'} It seems to us that the same argument works if the space $\mathcal{Z}_T$ defined in \eqref{eqn-Z_T}
is replaced by a bigger space $\hat{\mathcal{Z}}_T$  defined by
\begin{equation}
\label{eqn-Z_T-bigger}
  \hat{\mathcal{Z}}_T : = {L}_{w}^{2}(0,T;\rV)  \cap {L}^{2}(0,T;{\rH }_{\mathrm{loc}}) \cap \ccal ([0,T];{\rH}_{w}).
\end{equation}
In particular, to prove that the sequence $({\tilde{u}}_{n})$ given in \eqref{eqn-equal laws}, whose existence follows from the Skorokhod Theorem, converges to a solution of the Navier-Stokes equation, it is sufficient to use the convergence of $({\tilde{u}}_{n})$ in the space
$\hat{\mathcal{Z}}_T $.

\end{remark}

\section{The case of 2D domains}  \label{sec-2D-SNSEs}

A special result proved recently in \cite{Brz+Motyl_2013} is about the existence and uniqueness of strong solutions for $2$-D stochastic Navier Stokes equations in unbounded domains with a general noise.

Let us present the framework and the results.
Let us  recall Lemma 7.2  from \cite{Brz+Motyl_2013}.

\begin{lemma} \label{L:2D_NS_regularity}
Let $d=2$ and assume that all conditions in parts  \textbf{(H.1)}-\textbf{(H.3)} and \textbf{(H.5)} of Assumption \ref{assumption-main} are satisfied. Assume that ${\mu }_{0}=\delta_{u_0}$ for some  deterministic  $\, \, {u}_{0} \in \rH $.  Let
$(\hat{\Omega }, \hat{\fcal }, \hat{\fmath },  \hat{W}, \hat{\p }, u)$ be a martingale solution
of problem  \eqref{E:NS}, in particular,
\begin{equation}  \label{E:Th_2D_u_est}
\hat{\mathbb{E}} \Bigl[ \sup_{t \in [0,T]} \norm{u(t)}{\rH}{2} + \int_{0}^{T} \norm{\nabla u(t)}{}{2} \, dt  \Bigr] < \infty .
\end{equation}
Then for $\hat{\p }$-almost all $\omega \in \hat{\Omega }$ the trajectory $u(\cdot , \omega )$ is equal almost everywhere to a continuous $\rH$-valued function defined on $[0,T]$.
$\hat{\p }$-a.s. and
\begin{equation} \label{E:2D_NS}
  u(t) = {u}_{0} - \int_{0}^{t}\bigl[ \acal u(s) + B (u(s)) \bigr] \, ds + \int_{0}^{t}f(s)\, ds
    + \int_{0}^{t}G(u(s)) \, d \hat{W}(s), \quad t \in [0,T].
\end{equation}
\end{lemma}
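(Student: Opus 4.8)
The statement asserts that in two dimensions, any martingale solution of \eqref{E:NS} with deterministic initial datum $u_0$ has $\rH$-valued continuous trajectories and satisfies the integral identity \eqref{E:2D_NS} in $\rV'$ (pathwise). Since \eqref{E:2D_NS} in $\rV'$ is already available from Remark \ref{R:Sol_cont_V'} (which only uses \textbf{(H.1)}--\textbf{(H.5)}, not $d=2$), the genuine content here is the upgrade of the weak continuity \eqref{eqn-regularity of u} to \emph{strong} $\rH$-continuity. The plan is to apply an It\^o-type formula for the square of the $\rH$-norm to the equation \eqref{E:mart_sol_int_identity_V'}, exploiting the extra integrability available only in $d=2$.

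\textbf{Key steps.} First I would record the $d=2$ regularity of the nonlinearity: from \eqref{E:estimate_B}, for $u \in L^2(0,T;\rV) \cap \ccal([0,T];\rH_w)$ one has $B(u(\cdot),u(\cdot)) \in L^{2}(0,T;\rV')$ (indeed $|B(u,u)|_{\rV'} \le c_2 |u|_{\rH}\,\|u\|$, which is in $L^2(0,T)$ by \eqref{E:mart_sol_energy-ineq}), whereas in general dimension one only gets $L^{4/3}$. Combined with $\acal u \in L^2(0,T;\rV')$, $f \in L^p(0,T;\rV') \subset L^2(0,T;\rV')$ (using $p \ge 2$), and the fact (proved in Remark \ref{R:Sol_cont_V'}) that $\int_0^\cdot G(u(s))\,d\hat W(s)$ is an $\rH$-valued continuous square-integrable martingale, one sees that the process $u$ satisfies, $\hat\p$-a.s.,
\begin{equation*}
u(t) = u_0 - \int_0^t \bigl[\acal u(s) + B(u(s))\bigr]\,ds + \int_0^t f(s)\,ds + \int_0^t G(u(s))\,d\hat W(s)
\end{equation*}
with $u \in L^2(0,T;\rV)$ and $\frac{du}{dt} - G(u)\dot W \in L^2(0,T;\rV')$. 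This is precisely the setting of the classical It\^o/Lions--Magenes lemma (e.g. Temam, \emph{Navier-Stokes Equations}, Ch. III, or Pardoux/Krylov--Rozovskii): a process that lies in $L^2(0,T;\rV)$ and whose "drift part" derivative lies in $L^2(0,T;\rV')$, with an $\rH$-valued martingale noise whose integrand is in $L^2(0,T;\lhs(\rK,\rH))$ (which holds here by \eqref{E:G} and \eqref{E:mart_sol_energy-ineq}, exactly as in the Remark inside the proof of Remark \ref{R:Sol_cont_V'}), has a modification that is continuous in $\rH$ and satisfies the It\^o formula
\begin{equation*}
|u(t)|_\rH^2 = |u_0|_\rH^2 + 2\int_0^t {}_{\rV'}\langle f(s) - \acal u(s) - B(u(s)),\, u(s)\rangle_\rV\,ds + 2\int_0^t \bigl(u(s),\, G(u(s))\,d\hat W(s)\bigr)_\rH + \int_0^t \|G(u(s))\|_{\lhs(\rK,\rH)}^2\,ds.
\end{equation*}
Then I would argue, as is standard, that a process which is $\rH_w$-continuous (from \eqref{eqn-regularity of u}) and whose $\rH$-norm $t \mapsto |u(t)|_\rH^2$ is continuous (from the It\^o formula just displayed) is automatically strongly $\rH$-continuous: weak convergence $u(t_n)\rightharpoonup u(t)$ together with $|u(t_n)|_\rH \to |u(t)|_\rH$ forces $u(t_n) \to u(t)$ strongly in the Hilbert space $\rH$. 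This, combined with \eqref{E:mart_sol_int_identity_V'} from Remark \ref{R:Sol_cont_V'}, gives both assertions of the Lemma.

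\textbf{Main obstacle.} The delicate point is justifying the It\^o formula for $|u(\cdot)|_\rH^2$: the difficulty (flagged by the authors themselves in the discussion preceding Theorem \ref{thm-continuous dependence-existence}, concerning the non-square-integrability of certain martingales when the noise depends on $\nabla u$) is that $\int_0^\cdot (u(s), G(u(s))\,d\hat W(s))_\rH$ need not be a true martingale without a localization argument, and the duality pairing $\langle \acal u, u\rangle = \|u\|^2$ is only integrable, not bounded. The clean way around this is a stopping-time localization: set $\tau_N := \inf\{t : |u(t)|_\rH + \int_0^t \|u(s)\|^2\,ds \ge N\}\wedge T$, apply the finite-dimensional-free version of the It\^o formula of, e.g., \cite{Brz+Motyl_2013} (see also Remark \ref{R:def-mart-sol-test}, which is precisely why the integral identity was extended to all $\rv \in \rV$) on $[0,\tau_N]$, obtain $\rH$-continuity on $[0,\tau_N]$, and then let $N \to \infty$ using that $\tau_N \uparrow T$ a.s. by \eqref{E:mart_sol_energy-ineq}. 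Since $d=2$ is exactly what makes $B(u,u)$ lie in $L^2(0,T;\rV')$ — the hypothesis that the abstract It\^o formula requires — this is where two-dimensionality is genuinely used, and I would make that dependence explicit. The rest is routine and parallels the classical treatment of the deterministic 2D Navier--Stokes equations.
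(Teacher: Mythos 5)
Your proposal is correct and follows essentially the route the authors intend: the paper itself does not prove this lemma but imports it as Lemma 7.2 of \cite{Brz+Motyl_2013}, and the argument there (echoed in the paper's own proof of Lemma \ref{lem-apriori estimates}) is exactly your scheme — use that $d=2$ gives $|B(u,u)|_{\rV'}\le c\,|u|_{\rH}\Norm{u}{}{}\in L^2(0,T)$ pathwise, localize with the stopping times $\tau_N$ so that Pardoux's infinite-dimensional It\^o formula applies, and combine the resulting continuity of $t\mapsto |u(t)|_\rH^2$ with the weak continuity \eqref{eqn-regularity of u} to get strong $\rH$-continuity. Your localization and the identification of where two-dimensionality enters are the same as in the source, so nothing further is needed.
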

\noindent
Let us emphasize that equality \eqref{E:2D_NS} is understood as the one in the space ${\rV }^{\prime }$, see Remark \ref{R:Sol_cont_V'}.

\bigskip  \noindent
The next result is \cite[Lemma  7.3]{Brz+Motyl_2013}.

\begin{lemma}  \label{L:2D_pathwise_uniqueness}
Assume that all conditions in parts  \textbf{(H.1)}-\textbf{(H.3)} and \textbf{(H.5)} of Assumption \ref{assumption-main}
are satisfied.  In addition we assume that the Lipschitz constant  of $G$ is smaller than $\sqrt{2}$, i.e. the map $G$ satisfies condition \eqref{E:G_Lipsch} in part \textbf{(H.2)} of  Assumption \ref{assumption-main}
 with  $L<\sqrt{2}$.
 Assume that    $\, \, {u}_{0} \in \rH $.
If ${u}_{1}$ and $ {u}_{2}$ are two  solutions of problem  \eqref{E:NS}
defined on the same filtered probability space
$(\hat{\Omega }, \hat{\fcal }, \hat{\fmath }, \hat{\p })$ and the same Wiener process $\hat{W}$,
then $\hat{\p }$-a.s. for all $t \in \mathbb{R}_+$,
${u}_{1}(t)={u}_{2}(t)$.
\end{lemma}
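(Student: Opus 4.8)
The plan is to run the classical $\rH$-energy estimate for the difference of the two solutions and to close it by a Gronwall-type argument, the whole point being that the hypothesis $L<\sqrt2$ leaves a strictly positive amount of dissipation after the nonlinearity and the noise have been estimated. First I would set $w:=u_1-u_2$. Subtracting the two copies of the $\rV^\prime$-valued identity \eqref{E:mart_sol_int_identity_V'} (legitimate by Remark \ref{R:Sol_cont_V'}) and using the bilinearity identity $B(u_1,u_1)-B(u_2,u_2)=B(w,u_1)+B(u_2,w)$, one sees that $\hat{\p}$-a.s. in $\rV^\prime$
\[
w(t)+\int_0^t\acal w(s)\,ds+\int_0^t\bigl[B(w(s),u_1(s))+B(u_2(s),w(s))\bigr]\,ds=\int_0^t\bigl[G(u_1(s))-G(u_2(s))\bigr]\,d\hat{W}(s),
\]
with $w(0)=0$; moreover, by Lemma \ref{L:2D_NS_regularity} applied to $u_1$ and $u_2$, the path $w(\cdot)$ is $\hat{\p}$-a.s. an $\rH$-valued continuous function lying in $L^2(0,T;\rV)$.

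Next I would apply the It\^o formula for $\norm{w(t)}{\rH}{2}$ in the Gelfand triple $\rV\embed\rH\embed\rV^\prime$ --- valid here since $\acal w\in L^2(0,T;\rV^\prime)$, $B(w,u_1)+B(u_2,w)=B(u_1,u_1)-B(u_2,u_2)\in L^{4/3}(0,T;\rV^\prime)$ in the two-dimensional case, and $G(u_1)-G(u_2)\in L^2(0,T;\lhs(\rK,\rH))$ by \eqref{E:G_Lipsch} --- to get
\[
\norm{w(t)}{\rH}{2}+2\int_0^t\Norm{w(s)}{}{2}\,ds=2\int_0^tb\bigl(w(s),w(s),u_1(s)\bigr)\,ds+\int_0^t\norm{G(u_1(s))-G(u_2(s))}{\lhs(\rK,\rH)}{2}\,ds+M(t),
\]
where $M(t):=2\int_0^t\bigl(w(s),[G(u_1(s))-G(u_2(s))]\,d\hat{W}(s)\bigr)_{\rH}$ is a continuous local martingale, and where I used $\dual{B(u_2,w)}{w}{}=b(u_2,w,w)=0$ from \eqref{E:wirowosc_b} together with $\dual{B(w,u_1)}{w}{}=b(w,u_1,w)=-b(w,w,u_1)$ from \eqref{E:antisymmetry_b}. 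Now \eqref{E:b_estimate_L^4} combined with the two-dimensional case of the Gagliardo--Nirenberg inequality \eqref{ineq-GNI} gives $|b(w,w,u_1)|\le\norm{w}{{L}^{4}}{2}\Norm{u_1}{}{}\le\sqrt2\,\norm{w}{\rH}{}\Norm{w}{}{}\Norm{u_1}{}{}$, so by Young's inequality, for every $\eps>0$,
\[
2|b(w,w,u_1)|\le\eps\Norm{w}{}{2}+\frac2\eps\Norm{u_1}{}{2}\norm{w}{\rH}{2},
\]
while \eqref{E:G_Lipsch} yields $\norm{G(u_1)-G(u_2)}{\lhs(\rK,\rH)}{2}\le L^2\Norm{w}{\rV}{2}\le L^2\bigl(\Norm{w}{}{2}+\norm{w}{\rH}{2}\bigr)$. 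Choosing $\eps\in(0,2-L^2)$ --- possible precisely because $L<\sqrt2$ --- the term $2\int_0^t\Norm{w}{}{2}$ on the left then absorbs $\int_0^t(\eps+L^2)\Norm{w}{}{2}$ coming from the right-hand side.

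Finally I would remove the stochastic term by the Flandoli--G\k{a}tarek integrating factor: put $\psi(t):=\exp\bigl(-L^2t-\tfrac2\eps\int_0^t\Norm{u_1(s)}{}{2}\,ds\bigr)$, which is well defined, $(0,1]$-valued and adapted since $\int_0^T\Norm{u_1(s)}{}{2}\,ds<\infty$ $\hat{\p}$-a.s. by \eqref{E:Th_2D_u_est}. Applying the It\^o product rule to $\psi(t)\norm{w(t)}{\rH}{2}$ and inserting the above bounds, all the $\norm{w}{\rH}{2}$ contributions cancel against the $-\bigl(L^2+\tfrac2\eps\Norm{u_1}{}{2}\bigr)\psi\norm{w}{\rH}{2}$ term coming from $d\psi$, and the remaining drift is $\le-(2-\eps-L^2)\psi(t)\Norm{w(t)}{}{2}\le0$; hence $\psi(t)\norm{w(t)}{\rH}{2}\le\int_0^t\psi(s)\,dM(s)$. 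Localizing with the stopping times $\sigma_N:=\inf\{t\ge0:\sup_{s\le t}\norm{w(s)}{\rH}{2}+\int_0^t\Norm{u_1(s)}{}{2}\,ds\ge N\}$, which increase $\hat{\p}$-a.s. to $+\infty$ by the $\rH$-continuity of $w$ and \eqref{E:Th_2D_u_est}, makes the stopped integral a genuine martingale, so $\hat{\e}\bigl[\psi(t\wedge\sigma_N)\norm{w(t\wedge\sigma_N)}{\rH}{2}\bigr]\le0$; since $\psi>0$ this forces $w(t\wedge\sigma_N)=0$ $\hat{\p}$-a.s., and letting $N\to\infty$ and using pathwise continuity of $w$ gives $w\equiv0$ on $\rzecz_+$, i.e. $u_1\equiv u_2$. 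The main obstacle is not conceptual but technical: rigorously justifying the Gelfand-triple It\^o formula for these weak (not strong-in-$D(\rA)$) solutions with the $L^{4/3}$-in-time nonlinear term, and handling the fact that the a priori estimates control $\hat{\e}[\sup_t\norm{u_i(t)}{\rH}{q}]$ only for $q<4$ in general --- which is exactly why the random exponential weight $\psi$ and the localization $\sigma_N$ are used rather than a naive Gronwall inequality after taking expectations.
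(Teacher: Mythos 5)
Your argument is correct and is essentially the intended one: the paper does not reprove this lemma but quotes it as Lemma 7.3 of \cite{Brz+Motyl_2013}, whose proof is exactly your scheme (energy identity for $w=u_1-u_2$ via the Pardoux It\^o formula, the Ladyzhenskaya estimate $|b(w,w,u_1)|\le\sqrt2\,|w|_{\rH}\Norm{w}{}{}\Norm{u_1}{}{}$, absorption of $\eps+L^2<2$ into the dissipation, and the adapted exponential weight plus localization to kill the martingale term). Two small touch-ups: in $d=2$ the nonlinear increment is pathwise in $L^2(0,T;\rV^\prime)$ (the exponent $4/3$ is the 3D one), and your stopping time $\sigma_N$ should also cap $\sup_{s\le t}\bigl(|u_1(s)|_{\rH}+|u_2(s)|_{\rH}\bigr)$ — as in the paper's $\tau_N$ of \eqref{eqn-tau_N} — since controlling $\sup|w|_{\rH}$ alone does not yield $\hat{\mathbb{E}}\int_0^{t\wedge\sigma_N}|B(u_1,u_1)-B(u_2,u_2)|^2_{\rV^\prime}\,ds<\infty$, which is what legitimises the infinite-dimensional It\^o formula here.
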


Because from now  we will be dealing  with the pathwise uniqueness of  solutions let us formulate the following assumption on the  stochastic basis.
\begin{assumption}
\label{assumption-second}
Assume that $\bigl( \Omega , \fcal , \mathbb{F} , \p  \bigr) $ is  a stochastic basis with a filtration $\mathbb{F}={\{ \ft \} }_{t \ge 0}$ and $ W=\bigl(W(t)\bigr)_{t\geq 0}$ is a cylindrical  Wiener process in a separable Hilbert space $\rK$ defined on this stochastic basis.
\end{assumption}

We will often consider problem \eqref{E:NS} with the initial data ${\mu }_{0}=\delta_{u_0}$ for some  deterministic  $\, \, {u}_{0} \in \rH $, and hence we explicitly rewrite that  problem in the following way:
\begin{equation} \label{E:NS'}
\begin{cases}
& du(t)+\acal u(t) \, dt+B\bigl( u(t),u(t) \bigr)\, dt = f(t) \, dt+G\bigl( u(t)\bigr) \, dW(t),
 \qquad t \geq 0 , \\
& u(0) = u_0,
\end{cases}
\end{equation}
To avoid any confusion, a martingale solution to problem \eqref{E:NS'} with initial data  $\, \, {u}_{0} \in \rH $, is a
martingale solution to problem \eqref{E:NS} with ${\mu }_{0}=\delta_{u_0}$ .

For the completeness  of the exposition let us also recall a notion of a strong solution.

\begin{definition}\label{def-strong solution}  \rm Assume that  $\, \, {u}_{0} \in \rH $ and  $f:[0,\infty ) \to {\rV }^{\prime }$. Assume Assumption \ref{assumption-second}.
We say that an $\mathbb{F}$-progressively measurable process
$u: [0,\infty) \times \Omega \to \rH$ with $\p  $ - a.e. paths
\[
  u(\cdot , \omega ) \in \ccal \bigl( [0,\infty), {\rH}_{w} \bigr)
   \cap {L}^{2}_{\textrm{loc}}([0,\infty);\rV )
\]
is a \bf strong solution \rm to  problem \eqref{E:NS'}, i.e.,
\begin{equation*} \label{E:NS''}
\begin{cases}
& du(t)+\acal u(t) \, dt+B\bigl( u(t),u(t) \bigr)\, dt = f(t) \, dt+G\bigl( u(t)\bigr) \, dW(t),
 \qquad t \geq 0 , \\
& u(0) = u_0,
\end{cases}
\end{equation*}
if and only if  for all $ t \in [0,\infty) $ and all $\rv \in \vcal $ the following identity holds $\p $ - a.s.
\begin{eqnarray*}
 \ilsk{u(t)}{\rv}{\rH} &+&  \int_{0}^{t} \dual{\acal u(s)}{\rv}{} \, ds
+ \int_{0}^{t} \dual{B(u(s),u(s))}{\rv}{} \, ds  \nonumber \\
  &=& \ilsk{{u}_{0}}{\rv}{\rH} +  \int_{0}^{t} \dual{f(s)}{\rv}{} \, ds
 + \Dual{\int_{0}^{t} G(u(s))\, dW(s)}{\rv}{}
\end{eqnarray*}
and for all $T>0$,
\begin{equation}
\mathbb{E} \Bigl[ \sup_{t \in [0,T]} \norm{u(t)}{\rH}{2} + \int_{0}^{T} \norm{\nabla u(t)}{}{2} \, dt  \Bigr] < \infty .
\end{equation}
\end{definition}

Let us recall two basic concepts of uniqueness of the solution, i.e. pathwise uniqueness and uniqueness in law, see \cite{Ikeda+Watanabe_1981},
\cite{Ondrejat_2004}. Please note the following difference between problems   \eqref{E:NS} and \eqref{E:NS'}. In the former, a law of the initial data is prescribed, while in the latter a initial data is given.

\begin{definition}  \rm
We say that solutions of problem  \eqref{E:NS'}
has  \bf pathwise uniqueness property  \rm if and only if for all $\, \, {u}_{0} \in \rH $ and  $f:[0,\infty ) \to {\rV }^{\prime }$ the following condition holds
\begin{equation}\label{eqn-uniqueness}
\begin{array}{rcl}
&&\hspace{-5truecm}\lefteqn{\mbox{\it if } {u}^{i}, \, \,  i=1,2, \mbox{\it are strong solutions of problem }   \;\;\eqref{E:NS'} \;\;\mbox{\it on }  \;\; (\Omega ,\fcal , \fmath ,\p ,W)\;\; \mbox{\it satisfying Assumption \ref{assumption-second},} }  \\
& &  \mbox{\it then $\p $-a.s. for all } t \in [0,\infty), \, \, \, {u}^{1}(t) = {u}^{2}(t).
\end{array}
\end{equation}
\rm Assume that  $\, \, {u}_{0} \in \rH $ and  $f:[0,\infty ) \to {\rV }^{\prime }$. A solution ${u}$  to problem \eqref{E:NS'} on $(\Omega ,\fcal , \fmath ,\p ,W)$ satisfying Assumption \ref{assumption-second}, is said to be pathwise unique iff  for every solution $\tilde{u}$
 to problem \eqref{E:NS'} on the same $(\Omega ,\fcal , \fmath ,\p ,W)$, one has
\[\mbox{$\p $-a.s. for all } t \in [0,\infty), \, \, \, {u}(t) = \tilde{u}(t).  \]
\end{definition}

\begin{definition}  \rm
We say that  problem  \eqref{E:NS}
has  \bf  uniqueness  in law property \rm iff for every
Borel measure $\mu$ on $\rH$
 and every $f:[0,\infty ) \to {\rV }^{\prime }$ the following condition holds
\begin{eqnarray} \label{cond-unique in law}
& & \mbox{\it if } ({\Omega }^{i},{\fcal }^{i}, {\fmath }^{i}, {\p }^{i},{W}^{i},{u}^{i}), \, \,  i=1,2, \mbox{ \it  are  such solutions
  of problem  \eqref{E:NS}
  that }  \\
& &  \mbox{ \it then  } {Law}_{{\p }^{1}}({u}^{1}) = {Law}_{{\p }^{2}}({u}^{2}) \mbox{ on } \ccal \bigl( [0,\infty), {\rH}_{w} \bigr)    \cap {L}^{2}_{\textrm{loc}}([0,\infty);\rV ),
\nonumber
\end{eqnarray}
where  ${Law}_{{\p }^{i}}({u}^{i})$, $i=1,2$, are by  definition  probability measures on $\ccal \bigl( [0,\infty), {\rH}_{w} \bigr)
   \cap {L}^{2}_{\textrm{loc}}([0,\infty);\rV )$.
\end{definition}

\begin{cor}\label{cor-unique}
Assume that conditions \textbf{(H.1)}-\textbf{(H.3)} and \textbf{(H.5)} of Assumption \ref{assumption-main} are satisfied and
 that  the map $G$ satisfies inequality  \eqref{E:G_Lipsch} in part \textbf{(H.2)} of  Assumption \ref{assumption-main}
with a constant $L$ smaller than $\sqrt{2}$. Assume also that $\bigl( \Omega , \fcal , \mathbb{F} , \p, W  \bigr) $ satisfies  Assumption \ref{assumption-second}. Then for every   $\, \, {u}_{0} \in \rH $.
\begin{itemize}
\item[1)]   There exists a pathwise unique strong solution $u$ on $\bigl( \Omega , \fcal , \mathbb{F} , \p, W  \bigr) $ of problem  \eqref{E:NS'}.
\item [2)] Moreover, if  $u$  is a strong solution
of problem  \eqref{E:NS'} on $\bigl( \Omega , \fcal , \mathbb{F} , \p, W  \bigr) $,
then for $\p $-almost all $\omega \in \Omega $ the trajectory $u(\cdot , \omega )$ is equal almost everywhere to a continuous $\rH$-valued function defined on $[0,\infty)$.
\item[3)] The martingale solution  of problem  \eqref{E:NS} with ${\mu }_{0}=\delta_{u_0}$  is unique in law.  In particular, if
   $\bigl({\Omega }^{i},{\fcal }^{i}, {\fmath }^{i}, {\p }^{i},{W}^{i},{u}^{i}\bigr)$, $i=1,2$ t  are  such solutions to problem  \eqref{E:NS}, then
 for all $t\geq 0$, the laws on $\rH$  of $\rH$-valued random variables  $u^1(t)$ and $u^2(t)$ coincide.
\end{itemize}
\end{cor}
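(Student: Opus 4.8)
The plan is to obtain all three assertions from four ingredients already available: the existence result Theorem \ref{T:existence_NS}, the $2$-D regularity result Lemma \ref{L:2D_NS_regularity}, the pathwise uniqueness result Lemma \ref{L:2D_pathwise_uniqueness}, and the abstract Yamada--Watanabe principle (\cite{Ikeda+Watanabe_1981,Ondrejat_2004}), namely that weak existence together with pathwise uniqueness yields strong existence and uniqueness in law.

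First I would settle 2). A strong solution of \eqref{E:NS'} on $\bigl(\Omega,\fcal,\fmath,\p,W\bigr)$ is, in particular, a martingale solution of \eqref{E:NS} with $\mu_0=\delta_{u_0}$ satisfying the energy estimate \eqref{E:Th_2D_u_est} on every interval $[0,T]$, so Lemma \ref{L:2D_NS_regularity} applies on each $[0,n]$, $n\in\nat$; the resulting $\rH$-continuous modifications agree almost everywhere, hence everywhere, on overlaps and patch to a continuous $\rH$-valued function on $[0,\infty)$ equal to $u(\cdot,\omega)$ for $\p$-a.e.\ $\omega$ and a.e.\ $t$. I would also record the elementary fact that a weakly continuous trajectory which is a.e.\ equal to a strongly continuous one is itself strongly continuous, so that \emph{every} martingale solution of \eqref{E:NS} with $\mu_0=\delta_{u_0}$ has, after modification on a null set, trajectories in the space $\xcal:=\ccal([0,\infty);\rH)\cap L^2_{\mathrm{loc}}([0,\infty);\rV)$, which is Polish.

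Next, for 1) and 3): Theorem \ref{T:existence_NS}(1) with $\mu_0=\delta_{u_0}$ (for which the moment condition holds trivially, $\int_\rH|x|^p\,\delta_{u_0}(dx)=|u_0|_\rH^p<\infty$) provides weak existence for \eqref{E:NS'}, while Lemma \ref{L:2D_pathwise_uniqueness} provides pathwise uniqueness — this is the step that uses the standing hypothesis $L<\sqrt2$. I would then invoke the Yamada--Watanabe theorem in the version adapted to stochastic evolution equations (\cite{Ondrejat_2004}, cf.\ \cite{Ikeda+Watanabe_1981}), working on the Polish trajectory space $\xcal$: weak existence and pathwise uniqueness imply that on any prescribed stochastic basis carrying a $\rK$-cylindrical Wiener process $W$ there exists a solution adapted to the $W$-generated (augmented) filtration — hence $\fmath$-adapted, since $W$ is $\fmath$-adapted — which is then the pathwise unique strong solution by Lemma \ref{L:2D_pathwise_uniqueness}; and that the law on $\xcal$ of any martingale solution of \eqref{E:NS} with $\mu_0=\delta_{u_0}$ is uniquely determined. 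This gives assertion 1) and the first part of 3). For the ``in particular'' of 3), I would note that for each fixed $t\ge0$ the evaluation map $e_t\colon\xcal\ni u\mapsto u(t)\in\rH$ is continuous, hence Borel; since the weak and norm Borel $\sigma$-fields on the separable Hilbert space $\rH$ coincide, equality of laws on $\xcal$ pushes forward under $e_t$ to equality of the laws of $u^1(t)$ and $u^2(t)$ on $\rH$.

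The main obstacle is not any computation but the applicability of the Yamada--Watanabe machinery: the path space built into Definition \ref{def-sol-martingale} uses the \emph{weak} topology of $\rH$ (and of $L^2(0,T;\rV)$), which is not Polish, so the classical argument does not apply off the shelf. The $2$-dimensional regularity of Lemma \ref{L:2D_NS_regularity} is exactly what removes this difficulty — it upgrades every solution to one with strongly continuous $\rH$-valued trajectories, placing them all in the genuinely Polish space $\xcal$ — after which the standard argument (form the joint law of two solutions driven by the same Wiener process, use pathwise uniqueness to conclude it is carried by the diagonal, and disintegrate to produce the measurable solution map) goes through unchanged.
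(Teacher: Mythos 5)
Your proposal is correct and follows the route the paper intends: the paper's own ``proof'' of Corollary \ref{cor-unique} is essentially a one-line delegation to \cite{Brz+Motyl_2013}, whose argument is precisely the combination you reconstruct — weak existence from Theorem \ref{T:existence_NS}, pathwise uniqueness from Lemma \ref{L:2D_pathwise_uniqueness}, the $\rH$-continuity upgrade of Lemma \ref{L:2D_NS_regularity} to place trajectories in a Polish path space, and then the Yamada--Watanabe principle in Ondrej\'at's form. Your explicit identification of why the regularity lemma is needed (the weak-topology path space of Definition \ref{def-sol-martingale} is not Polish) is exactly the right observation and supplies detail the paper omits.
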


\begin{proof}
The proof of part (3) given in \cite{Brz+Motyl_2013} yields the uniqueness in law in the trajectory the space $\ccal \bigl( [0,\infty), {\rH}_{w} \bigr)    \cap {L}^{2}_{\textrm{loc}}([0,\infty);\rV )$, hence in $\ccal \bigl( [0,T], {\rH}_{w} \bigr) \cap {L}^{2}(0,T;\rV )$ for every $T>0$.
\end{proof}

Let us emphasize that, by definition, we require a martingale solution of the Navier-Stokes equation to satisfy inequality \eqref{E:mart_sol_energy-ineq}, i.e.
\begin{equation*}
\hat{\mathbb{E}} \Bigl[ \sup_{t \in [0,T]} \norm{u(t)}{\rH}{2} + \int_{0}^{T} \norm{\nabla u(t)}{}{2} \, dt  \Bigr] < \infty .
\end{equation*}
In Theorem \ref{T:existence_NS}, covering both 2D and 3D domains, we  proved that there exists a martingale solution satisfying stronger estimates, i.e. \eqref{E:H_estimate_NS}-\eqref{E:Poincare-NS-ineq}.
However, in the case when $\ocal $ is a 2D domain, we can prove that every martingale solution satisfies
these inequalities.

\begin{lemma} \label{lem-apriori estimates}
Assume that  $d=2$ and that conditions  \textbf{(H.1)}-\textbf{(H.3)} and \textbf{(H.5)} from Assumption \ref{assumption-main} are satisfied.
 Then the following holds.
\begin{itemize}
\item[(1) ] For every  $T>0$, ${R}_{1}>0$ and ${R}_{2}>0$ there exist constants ${C}_{1}(p)$ and ${C}_{2}(p)$ depending also on $T$, $R_1$ and $R_2$ such that
 if
$\mu_0$ is  a Borel probability measure  on $H$,
${f} \in  {L}^{p}(0,T; \rV^\prime )$ satisfy  $\int_H \vert x \vert^p \mu_0(dx) \leq R_1$
 and $ \vert f{\vert}_{{L}^{p}(0,T;\rV^\prime)}\leq R_2$, then every  martingale solution of problem \eqref{E:NS}
 with the initial data ${\mu }_{0}$ and the external force $f$,
 satisfies the following  estimates
\begin{equation} \label{E:H_estimate-2D}
\hat{\mathbb{E}} \bigl( \sup_{s\in [0,T] } {|u (s)|}_{\rH}^{p} \bigr) \le {C}_{1}(p)
\end{equation}
and
\begin{equation} \label{E:HV_estimate-2D}
 \hat{\mathbb{E}} \bigl[ \int_{0}^{T} {|u(s)|}_{\rH}^{p-2} \norm{ \nabla u (s)}{}{2} \, ds \bigr] \le {C}_{2}(p)  .
\end{equation}
In particular,
\begin{equation} \label{E:V_estimate-2D}
\hat{\mathbb{E}} \bigl[ \int_{0}^{T} \norm{ \nabla u (s)}{}{2} \, ds \bigr] \le {C}_{2}:= {C}_{2}(2).
\end{equation}
\item[(2) ]
Moreover, if $\ocal $ is a Poincar\'{e} domain and
the map $G$ satisfies inequality \eqref{E:G}  in part \textbf{(H.2)} of  Assumption \ref{assumption-main} with ${\lambda }_{0}=0$ (and with  $\rho \in [0,\infty)$ and $\eta \in (0,2]$), then the process $u$  satisfies additionally the following inequality for every $t \ge 0$
\begin{equation}  \label{E:Poincare-NS-ineq-2D}
  \hat{\mathbb{E}} [\, \norm{u(t)}{\rH}{2} \, ]  +  \frac{\eta }{2} \hat{\e } \biggl[ \int_{0}^{t} \norm{\nabla u (s)}{}{2} \, ds \biggr]  \biggr)
   \le \hat{\mathbb{E}} [\, \norm{u(0)}{\rH}{2} \, ]
 + \frac{2}{\eta } \int_{0}^{t} \norm{f(s)}{{\rv }^{\prime }}{2} \, ds + \rho t.
\end{equation}
\end{itemize}
\end{lemma}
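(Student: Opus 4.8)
The plan is to derive all the estimates from the It\^o formula applied first to $Y_t:=\norm{u(t)}{\rH}{2}$ and then to $\phi(Y_t)$ with $\phi(y)=y^{p/2}$, using the cancellation $\dual{B(u,u)}{u}{}=b(u,u,u)=0$ from \eqref{E:wirowosc_b} together with inequality \eqref{E:G} (equivalently \eqref{A:G_2'}). The hypothesis $d=2$ enters in exactly two places. First, by Lemma \ref{L:2D_NS_regularity} a $\hat{\p}$-a.e. trajectory of any martingale solution coincides a.e. with a continuous $\rH$-valued function satisfying \eqref{E:2D_NS} in $\rV^\prime$. Second, the two-dimensional form of \eqref{E:estimate_B} gives $\norm{B(u(s))}{\rV^\prime}{}\le c\,\norm{u(s)}{\rH}{}\Norm{u(s)}{}{}$, so that by \eqref{E:mart_sol_energy-ineq} the drift $-\acal u-B(u)+f$ lies $\hat{\p}$-a.s.\ in $L^2(0,T;\rV^\prime)$; hence the Gelfand-triple It\^o formula (of Pardoux / Krylov--Rozovskii type, legitimate because by Remark \ref{R:def-mart-sol-test} the weak formulation holds for all test functions in $\rV$) applies and yields
\[
Y_t=Y_0+\int_0^t\bigl[-2\Norm{u(s)}{}{2}+2\dual{f(s)}{u(s)}{}+\Norm{G(u(s))}{\lhs(\rK,\rH)}{2}\bigr]\,ds+M_t ,
\]
where $M_t:=2\int_0^t\ilsk{u(s)}{G(u(s))\,d\hat W(s)}{\rH}$ satisfies $d\langle M\rangle_s=4\,\norm{G(u(s))^\ast u(s)}{\rK}{2}\,ds\le 4\,Y_s\,\Norm{G(u(s))}{\lhs(\rK,\rH)}{2}\,ds$. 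For $N\in\nat$ put $\tau_N:=\inf\{t\in[0,T]:\norm{u(t)}{\rH}{}\ge N\}$ with $\inf\emptyset:=T$; then $\tau_N\uparrow T$ $\hat{\p}$-a.s.\ by $\rH$-continuity, and on $[0,\tau_N]$ the process $\norm{u}{\rH}{}$ is bounded, so that after stopping all the stochastic integrals below are genuine square-integrable martingales by \eqref{E:mart_sol_energy-ineq} and \eqref{E:G}.

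For part (1), apply the one-dimensional It\^o formula to $\phi(Y_t)=\norm{u(t)}{\rH}{p}$ (regularising $\phi$ near the origin by $\phi_\varepsilon(y)=(y+\varepsilon)^{p/2}$ and letting $\varepsilon\downarrow0$ when $p<4$, and using $\norm{G(u)^\ast u}{\rK}{2}\le Y\,\Norm{G(u)}{\lhs(\rK,\rH)}{2}$ in the second-order term). The $dt$-part of $d\phi(Y_t)$ equals
\[
-p\,Y_s^{\frac p2-1}\Norm{u(s)}{}{2}+p\,Y_s^{\frac p2-1}\dual{f(s)}{u(s)}{}+\tfrac{p(p-1)}{2}\,Y_s^{\frac p2-1}\Norm{G(u(s))}{\lhs(\rK,\rH)}{2},
\]
plus a local-martingale part $M^{(p)}_t:=p\int_0^t Y_s^{p/2-1}\ilsk{u(s)}{G(u(s))\,d\hat W(s)}{\rH}$. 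Since $Y_s^{p/2-1}\norm{u(s)}{\rH}{2}=\norm{u(s)}{\rH}{p}$ and $Y_s^{p/2-1}\Norm{u(s)}{}{2}=\norm{u(s)}{\rH}{p-2}\Norm{u(s)}{}{2}$, inserting \eqref{E:G} makes the coefficient of $\norm{u(s)}{\rH}{p-2}\Norm{u(s)}{}{2}$ equal to $-\kappa$ with $\kappa:=p-\tfrac{p(p-1)}{2}(2-\eta)$, and $\kappa>0$ is \emph{precisely} the admissible range $p<2+\tfrac{\eta}{2-\eta}$ of \eqref{eqn-p_cond}. Using $\norm{u}{\rV}{2}=\norm{u}{\rH}{2}+\Norm{u}{}{2}$ (see \eqref{E:norm_V}) one bounds $p\,Y_s^{p/2-1}\dual{f}{u}{}$ by $\tfrac{\kappa}{2}\norm{u(s)}{\rH}{p-2}\Norm{u(s)}{}{2}+\tfrac{\kappa}{2}\norm{u(s)}{\rH}{p}+C_{\kappa}\norm{u(s)}{\rH}{p-2}\norm{f(s)}{\rV^\prime}{2}$; the first summand is absorbed into the dissipation, and Young's inequality turns the rest, together with the $\lambda_0$- and $\rho$-terms from \eqref{E:G}, into a bound $C(\norm{u(s)}{\rH}{p}+\norm{f(s)}{\rV^\prime}{p}+1)$. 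One thus gets, $\hat{\p}$-a.s.\ for all $t\in[0,T]$,
\[
\norm{u(t)}{\rH}{p}+\tfrac{\kappa}{2}\int_0^t\norm{u(s)}{\rH}{p-2}\Norm{u(s)}{}{2}\,ds\le\norm{u(0)}{\rH}{p}+C\int_0^t\bigl(\norm{u(s)}{\rH}{p}+\norm{f(s)}{\rV^\prime}{p}+1\bigr)\,ds+M^{(p)}_t .
\]
Stopping at $\tau_N$, taking $\hat{\e}$ (the stopped $M^{(p)}$ has zero mean), dropping the nonnegative dissipation integral, and using $\int_{\rH}\norm{x}{}{p}\mu_0(dx)\le R_1$, $\norm{f}{L^p(0,T;\rV^\prime)}{p}\le R_2^p$, Gronwall's lemma gives $\sup_{t\le T}\hat{\e}\norm{u(t\wedge\tau_N)}{\rH}{p}\le C_1(p)$ uniformly in $N$; Fatou's lemma ($N\uparrow\infty$) yields $\sup_{t\le T}\hat{\e}\norm{u(t)}{\rH}{p}\le C_1(p)$, and feeding this back into the displayed inequality gives \eqref{E:HV_estimate-2D}; applying the same argument with exponent $2$ in place of $p$ yields \eqref{E:V_estimate-2D}. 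Finally, to replace $\sup_t\hat{\e}$ by $\hat{\e}\sup_t$ in \eqref{E:H_estimate-2D}, take $\sup_{t\le T}$ in the (stopped) displayed inequality, estimate $\hat{\e}\sup_{t\le T}\vert M^{(p)}_{t\wedge\tau_N}\vert$ by the Burkholder--Davis--Gundy inequality through $\langle M^{(p)}\rangle_{T\wedge\tau_N}\le p^2\sup_{s\le T}\norm{u(s\wedge\tau_N)}{\rH}{p}\int_0^{T}\norm{u(s)}{\rH}{p-2}\Norm{G(u(s))}{\lhs(\rK,\rH)}{2}\,ds$, bound the last integral by \eqref{E:G} together with \eqref{E:HV_estimate-2D} and the moment bound just obtained, and absorb the arising $\tfrac12\hat{\e}\sup_{t\le T}\norm{u(t\wedge\tau_N)}{\rH}{p}$ into the left-hand side before letting $N\uparrow\infty$.

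For part (2), assume in addition that $\ocal$ is a Poincar\'e domain and that \eqref{E:G} holds with $\lambda_0=0$. Then $\norm{f}{\rV^\prime}{}$ is the dual norm relative to $\Norm{\cdot}{}{}$, so $\dual{f}{u}{}\le\norm{f}{\rV^\prime}{}\Norm{u}{}{}$; inserting into the It\^o identity for $Y_t$ the bounds $\Norm{G(u)}{\lhs(\rK,\rH)}{2}\le(2-\eta)\Norm{u}{}{2}+\rho$ and $2\dual{f}{u}{}\le\tfrac{\eta}{2}\Norm{u}{}{2}+\tfrac{2}{\eta}\norm{f}{\rV^\prime}{2}$, the three $\Norm{u(s)}{}{2}$-contributions add up to $(-2+\tfrac{\eta}{2}+2-\eta)\Norm{u(s)}{}{2}=-\tfrac{\eta}{2}\Norm{u(s)}{}{2}$, whence
\[
\norm{u(t)}{\rH}{2}+\tfrac{\eta}{2}\int_0^t\Norm{u(s)}{}{2}\,ds\le\norm{u(0)}{\rH}{2}+\tfrac{2}{\eta}\int_0^t\norm{f(s)}{\rV^\prime}{2}\,ds+\rho t+M_t .
\]
Stopping at $\tau_N$, taking expectations (the stopped $M$ being a true martingale) and passing to the limit $N\uparrow\infty$ by Fatou's lemma and the monotone convergence theorem gives \eqref{E:Poincare-NS-ineq-2D}; note $\hat{\e}\norm{u(0)}{\rH}{2}=\int_{\rH}\norm{x}{}{2}\mu_0(dx)\le R_1^{2/p}<\infty$.

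The main obstacle --- and the only point where two-dimensionality is genuinely used --- is the justification that a general martingale solution, which is merely a weak solution, may be ``tested against itself'', i.e.\ the validity of the It\^o formula for $\norm{u(t)}{\rH}{2}$: this requires the drift to lie pathwise in $L^2(0,T;\rV^\prime)$, which follows from the 2D estimate \eqref{E:estimate_B} but fails in dimension three, and it requires the $\rH$-path-continuity supplied by Lemma \ref{L:2D_NS_regularity}. The remaining work is the standard energy / Gronwall / Burkholder--Davis--Gundy bookkeeping, the only delicate accounting being the sign of the dissipation coefficient $\kappa$, which stays positive exactly on the admissible range \eqref{eqn-p_cond} of $p$.
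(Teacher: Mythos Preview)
Your proposal is correct and follows essentially the same route as the paper: the It\^o formula in the Gelfand triple $V\subset H\subset V^\prime$ combined with the stopping times $\tau_N=\inf\{t:|u(t)|_{\rH}\ge N\}$, the cancellation \eqref{E:wirowosc_b}, assumption \eqref{E:G}, Gronwall, and Burkholder--Davis--Gundy; the r\^ole of $d=2$ (path $\rH$-continuity via Lemma \ref{L:2D_NS_regularity} and $B(u,u)\in L^2(0,T;\rV^\prime)$ pathwise via \eqref{E:estimate_B}) is identified exactly as in the paper. The only cosmetic difference is that the paper applies Pardoux's It\^o lemma directly to $x\mapsto|x|_{\rH}^{p}$, whereas you first obtain the It\^o identity for $Y_t=|u(t)|_{\rH}^{2}$ and then apply the scalar It\^o formula to $y\mapsto y^{p/2}$; both computations produce the same coefficient $p-\tfrac{p(p-1)}{2}(2-\eta)$ whose positivity is precisely \eqref{eqn-p_cond}.
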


\bigskip
The proof of Lemma \ref{lem-apriori estimates}  is  similar to the proof of estimates (5.4), (5.5) and (5.6) from Appendix in \cite{Brz+Motyl_2013}. The difference is that the solution process $u$ to which the It\^o formula (in a classical form, see for instance \cite{Ikeda+Watanabe_1981}) was applied was taking values in a finite dimensional Hilbert space $H_n$ and $u$ was a  solution in the most classical way. Now, $u_n$ is martingale solution to problem \eqref{E:NS}, see Definition \ref{def-sol-martingale}.

If we assume that
$d=2$, by Lemma III.3.4 p. 198 in \cite{Temam_2001},  we infer that the regularity assumption
\eqref{eqn-regularity of u}   implies that
\[
  B\big(u(\cdot , \omega ),u(\cdot , \omega )\big) \in
    {L}^{2}_{\textrm{loc}}([0,\infty);\rV^\prime ) \mbox{ for } \hat{\mathbb{P}}\mbox{-a.a. } \omega\in \Omega.
\]
This however does not imply that
\[
\hat{\mathbb{E}} \int_0^T \vert B(u(t),u(t))\vert^2_{\rV^\prime}\, dt<\infty
\]
what is necessary in order to apply the infinite dimensional It\^o Lemma from \cite{Pardoux_1979}.

\bigskip  \noindent
Fortunately, we can proceed as in the proof of the uniqueness result, i.e. Lemma 7.3 from \cite{Brz+Motyl_2013}, i.e. introduce a family $\tau_N$, $N\in\mathbb{N}$ of  the stopping times defined by
\begin{equation}  \label{eqn-tau_N}
   {\tau }_{N} :=  \inf \{ t \in [0,\infty) : \norm{u(t)}{\rH}{} \geq N  \} ,
\qquad N \in \nat  .
\end{equation}
and then consider a stopped process $u(t\wedge \tau_N)$, $t\geq 0$. Note that with this definition of the stopping time $\tau_N$, we have
\[
\hat{\mathbb{E}} \int_0^{T \wedge \tau_N} \vert B(u(t),u(t))\vert^2_{\rV^\prime}\, dt\leq CN^{2} \hat{\mathbb{E}} \int_0^{T} \Vert u(t)\Vert^2\, dt <\infty.
\]

\bigskip \noindent
\bf Remark. \rm
If $d=3$, then
\[
  B\big(u(\cdot , \omega ),u(\cdot , \omega )\big) \in
    {L}^{4/3}_{}(0,T;\rV^\prime ) \quad \mbox{ for } \hat{\mathbb{P}}\mbox{-a.a. } \omega\in \Omega.
\]
Thus, in this case the above procedure with the stopping time ${\tau }_{N}$ does not help.

\bigskip  \noindent
\begin{proof}[Proof of Lemma \ref{lem-apriori estimates}]
Let us fix $p $ satisfying  condition \eqref{eqn-p_cond}.
As in the proof of Lemma A.\ref{L:Galerkin_estimates},
we apply the It\^{o} formula from \cite{Pardoux_1979} to the function $F$ defined by
\[F: \rH \ni x \mapsto  {|x|}_{\rH}^{p}\in \mathbb{R}.\]
With the above comments in mind and using Remark \ref{R:def-mart-sol-test}, we have, for $\, t \in [0,\infty)$,
\begin{eqnarray}
{|{u} (t\wedge \tau_N)|}^{p} &-&  {|{u} (0)|}^{p}
  =  \int_0^{t\wedge \tau_N} \Bigl[ p \,  {|{u} (s)|}^{p-2} \dual{{u} (s)}{- \acal {u} (s)
       -  B \bigl( {u} (s) \bigr) +  f(s)}{}  \nonumber \\
 &&\hspace{2.2truecm}\lefteqn{+    \frac{1}{2} \tr \;\bigl[  F^{\prime\prime}(u(s))\bigl( G ({u} (s)), G ({u} (s)) \bigr)  \bigr] \Bigr] \, ds}
\nonumber\\&+& p \, \int_0^{t\wedge \tau_N}   {|{u} (s)|}^{p-2} \dual{{u} (s)}{ G ( {u} (s) ) \, d \hat{W}(s) }{}
 \nonumber\\
&=&
  \int_0^{t\wedge \tau_N}  \Bigl[ - p \, {|{u} (s)|}^{p-2} \Norm{{u} (s)}{}{2}
   + p \, {|{u} (s)|}^{p-2} \dual{{u} (s)}{  f(s)}{}
    \nonumber\\&&\hspace{1.2truecm}\lefteqn{+ \frac{1}{2} \tr \bigl[  F^{\prime\prime}(u(s))\bigl( G ({u} (s)), G ({u} (s)) \bigr)  \bigr] \Bigr] \, ds} \nonumber\\
 &+&  p \, \int_0^{t\wedge \tau_N}  {|{u} (s)|}^{p-2} \dual{{u} (s)}{ G ( {u} (s) ) \, d \hat{W}(s) }{}  .
   \label{ineq-01-2D}
\end{eqnarray}
Proceeding as  in the proof of Lemma A.\ref{L:Galerkin_estimates},  we obtain
\begin{equation}
\begin{array}{rcl}
  {|u (t\wedge \tau_N )|}^{p}   &+&\delta
  \, \int_{0}^{t\wedge \tau_N}{|u (s)|}^{p-2} \norm{\nabla u (s)}{}{2} \, ds \\
  && \\
 & \le&    {|u (0)|}^{p} +K_p({\lambda }_{0},\rho)
\int_{0}^{t\wedge \tau_N} \, {|u (s)|}^{p} \, ds
  +   \frac{2\rho}{p} t \,
  + \eps^{-p/2} \int_{0}^{t\wedge \tau_N } {|f(t)|}_{\rV^{\prime }}^{p}\, ds
  \\
  && \\
& +& p \,  \int_{0}^{t}{|u (s)|}^{p-2} \dual{u (s)}{ G ( u (s) ) \, d \hat{W}(s) }{} ,
\quad t \in [0,\infty ) ,
\end{array}     \label{E:apriori-2D}
\end{equation}
where ${K}_{p}({\lambda }_{0},\rho )=\frac{p-1}{2}[{\lambda }_{0}p+2+\rho (p-2)]$.

By  the definition of the stopping time $\tau_N$  we infer that the process
\[
 {\mu }_{N}(t):= \int_{0}^{t \wedge \tau_N} {|u (s)|}^{p-2} \dual{u (s)}{G(u (s)) \, d \hat{W}(s) }{} ,
 \qquad t \in [0,\infty)
\]
is a martingale. Indeed, if we define a map
\[
g:\rV\ni u\mapsto \{ \rK\ni k \mapsto   \dual{u }{G(u)k }{} \in \rH\} \in \lhs (\rK,\mathbb{R})
\]
then $\mu_N(t)=  \int_{0}^{t \wedge \tau_N} {|u(s)|}^{p-2} g(u(s))dW(s)$ and,
since the map $G$ satisfies inequality \eqref{E:G}  in part \textbf{(H.2)} of  Assumption \ref{assumption-main}, we infer that for every $t\geq 0$,
\begin{eqnarray}
\label{ineq-auxiliary_01-2D}
&&\int_{0}^{t \wedge \tau_N} \Vert\, {|u(s)|}^{p-2} g(u(s))  \Vert^2_{\lhs (\rK,\mathbb{R})}\, ds=
\int_{0}^{t \wedge \tau_N} {|u(s)|}^{p-2} \Vert  \,g(u(s))  \Vert^2_{\lhs (\rK,\mathbb{R})}\, ds
\\
\nonumber
&&\leq \int_{0}^{t \wedge \tau_N} {|u(s)|}^{p-2} {|u(s)|}^{2} \Vert  G(u(s))  \Vert^2_{\lhs (\rK,\mathbb{\rH})} \, ds
\leq \int_{0}^{t \wedge \tau_N} {|u(s)|}^{p} \big[ (2- \eta ) \, \norm{\nabla u (t)}{}{2}
  + {\lambda }_{0} {|u (t)|}^{2} + \rho \big] \, ds
\\
\nonumber
&&\leq  (2- \eta ) N^p \int_{0}^{t \wedge \tau_N}   \, \norm{\nabla u (t)}{}{2}\, dt+ t N^p(\lambda_0N^2+\rho).
\end{eqnarray}
Hence by inequality \eqref{E:mart_sol_energy-ineq}
 we infer that
\[ \hat{\mathbb{E}} \, \int_{0}^{t \wedge \tau_N} \Vert\, {|u(s)|}^{p-2} g(u(s))  \,\Vert^2_{\lhs (\rK,\mathbb{R})}\, ds<\infty, \;\; t\geq 0.
\]
and thus we infer, as claimed,  that the process $ {\mu }_{N}$ is a martingale. Hence,
  $\mathbb{E}[{\mu }_{N} (t) ] = 0 $. Let us now fix $T>0$.  By taking expectation in inequality \eqref{E:apriori-2D} we infer that
\begin{equation}
\begin{array}{rcl}
  \hat{\mathbb{E}}\,\bigl[ {|u (t \wedge \tau_N)|}^{p} \,\bigr]
  \, \, &\leq&   \, \, \hat{\mathbb{E}}[ {|u(0)|}^{p} ] +
K_p({\lambda }_{0},\rho)\int_{0}^{t \wedge \tau_N} \,
 \hat{\mathbb{E}}\,\bigl[ {|u (s)|}^{p} \bigr] \, ds
+ \frac{2\rho}{p} (t \wedge \tau_N)
  + \eps^{-p/2} (t\wedge \tau_N) {|f|}_{{\rV }^{\prime }}^{p}
    \nonumber\\
 &&\nonumber\\
    &\leq &
  \hat{\mathbb{E}}[ {|u(0)|}^{p} ] +
K_p({\lambda }_{0},\rho)\int_{0}^{t \wedge \tau_N} \,
 \hat{\mathbb{E}}\,\bigl[ {|u (s \wedge \tau_N)|}^{p} \bigr] \, ds   + T\big( \frac{2\rho}{p}+ \eps^{-p/2} {|f|}_{{\rV }^{\prime }}^{p} \big),   \quad t \in [0,T].
  \end{array}   \label{E:apriori'-2D}
\end{equation}
Hence by the Gronwall Lemma there exists a constant
$C=C_p(  T,\eta ,{\lambda }_{0},\rho ,\hat{\mathbb{E}}[{|u(0)|}^{p} ],\norm{f}{{L}^{p}(0,T;{\rV }^{\prime })}{} )>0$ such that
\begin{equation} \label{E:app_H_est-2D}
  \hat{\mathbb{E}}\,\bigl[ {|u (t\wedge \tau_N )|}^{p} \bigr]  \le C , \qquad  \, t \in [0,T].
\end{equation}
Using this bound in \eqref{E:apriori-2D} we also obtain
\begin{equation} \label{E:app_HV_est-1-2D}
 \hat{\mathbb{E}}\,\biggl[ \int_{0}^{T\wedge \tau_N}{|u (s)|}^{p-2} \norm{\nabla u (s)}{}{2} \, ds  \biggr]
  \le C
\end{equation}
for a new constant  $C=\tilde C_p(\eta,\hat{\mathbb{E}}\, |u(0) |^{p},\hat{\mathbb{E}}\,\int_{0}^{T} {|f(s)|}_{{\rV }^{\prime }}^{p}\, ds )>0$.
Finally, taking the limit $N\to\infty$ and observing that $T\wedge \tau_N\to T$, by the Lebesgue dominated convergence Theorem we infer that for the same constant $C$ we have
\begin{equation} \label{E:app_H_est-1-2D}
 \sup_{t \in [0,T]} \hat{\mathbb{E}}\,\bigl[ {|u (t )|}^{p} \bigr]  \le C ,
\end{equation}
\begin{equation} \label{E:app_HV_est-2D}
 \hat{\mathbb{E}}\,\biggl[ \int_{0}^{T}{|u (s)|}^{p-2} \norm{\nabla u (s)}{}{2} \, ds  \biggr]
  \le C.
\end{equation}
This completes the proof of estimates \eqref{E:HV_estimate-2D} and \eqref{E:V_estimate-2D}.
The proof of inequality \eqref{E:H_estimate-2D} is the same as the proof of inequality \eqref{E:H_estimate_Galerkin_p} and thus omitted.

To prove inequality \eqref{E:Poincare-NS-ineq-2D} in the case $\mathcal{O}$ is a Poincar\'{e} domain we use the same arguments as the proof of inequality \eqref{E:Poincare_Galerkin}. This time however, the  solution to the Galerkin approximating equation  is replaced by the stopped process $u(t\wedge {\tau }_{N})$, $t\ge 0$. Let us recall that  in the space $\rV $ we consider the inner product $\dirilsk{\cdot }{\cdot }{}$ given by \eqref{E:il_sk_Dir}.

By identity \eqref{ineq-01-2D}  with $p=2$,   we have
\begin{eqnarray}
\label{ineq-02-2D}
  {|{u} (t\wedge {\tau }_{N})|}^{2} &-&  {|{u} (0)|}^{2}
=
  \int_0^{t\wedge {\tau }_{N}}  \Bigl\{ - 2 \,  \Norm{{u} (s)}{}{2}
   + 2 \,  \dual{{u} (s)}{  f}{} + \frac{1}{2} \tr \bigl[  F^{\prime\prime}(u(s))\bigl( G ({u} (s)), G ({u} (s)) \bigr)  \bigr] \Bigr\} \, ds \nonumber\\
 &+&  2 \, \int_0^{t\wedge {\tau }_{N}}   \dual{{u} (s)}{ G ( {u} (s) ) \, d \hat{W}(s) }{}, \;\; t\geq 0  .
 \nonumber
\end{eqnarray}
Since $\hat{\mathbb{E}}\big(\int_{0}^{t\wedge {\tau }_{N}} \dual{G(u(s))}{u(s)\, d\hat{W}(s)}{} \big)=0$, we infer that
\begin{eqnarray*}
\hat{\mathbb{E}} |u(t\wedge {\tau }_{N}){|}_{\rH}^{2}
&  \leq &   \hat{\mathbb{E}} [ \, {|u(0)|}_{\rH}^{2} \, ] +
\hat{\mathbb{E}}  \int_{0}^{t\wedge {\tau }_{N}}  \bigl\{  -2  \Norm{u(s)}{}{2}  +2\dual{f(s)}{u(s)}{}
 \bigr\} \, ds  + \hat{\mathbb{E}} \int_{0}^{t\wedge {\tau }_{N}}  \norm{G(u(s)) }{\lhs (\rK,\rH)}{2} \, ds.
\end{eqnarray*}
Taking next the $N\to \infty$ limit, since the map $G$ satisfies inequality \eqref{E:G}  in part \textbf{(H.2)} of  Assumption \ref{assumption-main} with ${\lambda }_{0}=0$, i.e $\norm{G(u(s))}{\lhs (K,\rH)}{2} \le (2-\eta )\Norm{{u} (s)}{}{2} +\varrho $,  we get
\begin{eqnarray}
\hat{\mathbb{E}} \norm{u(t)}{\rH}{2}
&  \leq & -\eta
\mathbb{E}  \int_{0}^{t}      \Norm{{u}(s)}{}{2}   \, ds  +\hat{\mathbb{E}} [ \,{|u(0)|}_{\rH}^{2} \, ]
 + 2\hat{\mathbb{E}} \int_{0}^{t} \dual{ f(s)  }{u(s)}{} \, ds +\varrho t.
  \label{ineq-apriori_2-2D}
\end{eqnarray}
Since
$2\dual{f}{u(s)}{} \leq \frac{\eta}2 \norm{\nabla u(s)}{}{2}+\frac{2}{\eta} \norm{f }{{\rV }^\prime}{2} $ we infer that
\begin{eqnarray}
\hat{\mathbb{E}} \norm{u(t)}{\rH}{2}
&  \leq & -\frac{\eta}2
\hat{\mathbb{E}}  \int_{0}^{t}      \Norm{u(s)}{}{2}   \, ds
   +\hat{\mathbb{E}} [{|u(0)|}_{\rH}^{2}] + \frac{2}{\eta} \int_{0}^{t}\norm{f(s) }{{\rV }^\prime}{2}
 +\varrho  t,\;\; t\geq 0.
\end{eqnarray}
The proof of inequality \eqref{E:Poincare-NS-ineq-2D}  is thus complete.
This completes the proof of Lemma \ref{lem-apriori estimates}.
\end{proof}

Note that if $f:[0,\infty ) \to {\rV }^{\prime }$ is constant, it satisfies assumption (H.3). In this case we will write $f\in {\rV }^{\prime }$.

\bigskip
By Theorem \ref{thm-continuous dependence-existence}  Corollary \ref{cor-unique} and Lemma \ref{lem-apriori estimates} we obtain the following result about  the continuous dependence of the  solutions to 2D SNSEs with respect to the initial data  and the external forces.

\begin{theorem} \label{thm-weak continuous dependence-existence_2D}
Let $d=2$. Let parts  \textbf{(H.1)}-\textbf{(H.2)}, \textbf{(H.5)} and \eqref{E:G_Lipsch} with a constant $L$ smaller than $\sqrt{2}$, of Assumption \ref{assumption-main}, be satisfied.
Assume that    $\, \, {u}_{0} \in \rH $, $ f \in  \rV^{\prime }$ and  that an $\rH$-valued  sequence $\, \, \big({u}_{0,n}\big)_{n=1}^\infty$ is  weakly convergent   in $\rH$ to $u_0$,
and that an $\rV^\prime$-valued  sequence $\, \, \big({f}_n\big)_{n=1}$ is  weakly convergent  in $\rV^\prime$  to $f$.
Let
\[
\bigl( \Omega_n , \fcal_n , \fmath_n ,\p_n  , W_n, u_n \bigr)
\]
be a  martingale solution of problem \eqref{E:NS'} on $[0,\infty )$  with the initial data ${u}_{0,n}$ and the external force $f_n$.
Then for every $T>0$ there exist
\begin{itemize}
\item a subsequence $({n}_{k}{)}_{k}$,
\item a stochastic basis
$\bigl( \tOmega , \tfcal ,  {\tilde{\mathbb{F}}} , \tp  \bigr) $, where $\tilde{\mathbb{F}}={\{ \tfcal {}_{t} \} }_{t \ge 0}$,
\item a cylindrical Wiener process $\tW=\tW (t)$, $t\in [0,\infty)$ defined on this basis,
\item and  an $\mathbb{F}$-progressively measurable processes $\tu (t)$,  $\big( \tunk (t)\big)_{k \ge 1}, t \in [0,T] $ (defined on this basis) with  laws supported in
$ \mathcal{Z}_{T} $ such that
\end{itemize}
\begin{equation}   \label{E:Skorokhod_appl_2D}
  \tunk \mbox{ \it has the same law as } \unk \mbox{ \it on } \mathcal{Z}_{T}
    \mbox{ \it and } \tunk \to \tu \mbox{ \it in } \mathcal{Z}_{T},
    \quad  \tp \mbox{ - \it a.s.}
\end{equation}
and  the  system
\begin{equation}   \label{eqn-system}
\bigl( \tOmega , \tfcal ,  {\tilde{\mathbb{F}}}, \tp , \tW,\tu  \bigr)
\end{equation}
is   a martingale solution    to  problem  \eqref{E:NS'} on the interval $[0,T]$ with the initial law ${\delta }_{{u}_{0}}$.
In particular,
for all $t\in [0,T]$ and  $\rv \in \rV$
\begin{eqnarray*}
& &\ilsk{\tu (t)}{\rv }{\rH} \,
 - \ilsk{\tu (0)}{\rv }{\rH}  +  \int_{0}^{t} \dual{ \acal \tu (s) }{\rv }{} \, ds
  +   \int_{0}^{t} \dual{ B \bigl( \tu (s)\bigr)}{\rv }{} \, ds   \\
& &\qquad \qquad  =  \int_{0}^{t} \dual{ f}{\rv }{} \, ds
+ \Dual{ \int_{0}^{t} G \bigl( \tu (s) \bigr) \, d \tW (s)}{\rv }{}.
\end{eqnarray*}
Moreover,  the process $\tu $ satisfies  the following inequality for every $p$ satisfying condition \eqref{eqn-p_cond} and $q\in [1,p]$
\begin{equation}  \label{tu_estimates_2D}
  \tilde{\mathbb{E}}\,\bigl[ \,\sup_{ s\in [0,T]  } {|\tu (s)| }_{\rH}^{q}\,\bigr]
 + \tilde{\mathbb{E}}\,\Bigl[ \,\int_{0}^{T} \Norm{\tu (s)}{}{2}\, ds \,\Bigr] < \infty  .
\end{equation}
\end{theorem}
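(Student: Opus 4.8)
The plan is to obtain the statement by assembling three already-established results: Lemma \ref{lem-apriori estimates}, Theorem \ref{thm-continuous dependence-existence} and part (3) of Corollary \ref{cor-unique}, so that the only real work is to check their hypotheses \emph{uniformly in} $n$. First I would fix $T>0$ and choose some $p$ satisfying \eqref{eqn-p_cond} (such $p$ exists because $\eta\in(0,2]$ is part of assumption \textbf{(H.2)}). Since the forces $f_n$ are constant in time we have $f_n\in {L}^{p}(0,T;\rV^\prime)$ with $\norm{f_n}{{L}^{p}(0,T;\rV^\prime)}{}=T^{1/p}\norm{f_n}{\rV^\prime}{}$, and likewise $f\in {L}^{p}(0,T;\rV^\prime)$; moreover weak convergence of $(f_n)$ in $\rV^\prime$ yields weak convergence of $(f_n)$ in ${L}^{p}(0,T;\rV^\prime)$ (pair against an element of ${L}^{p^\prime}(0,T;\rV)$ and use dominated convergence). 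Because $({u}_{0,n})$ is weakly convergent in $\rH$ and $(f_n)$ is weakly convergent in $\rV^\prime$, the uniform boundedness principle furnishes ${R}_{1},{R}_{2}>0$ with $\sup_n\norm{{u}_{0,n}}{\rH}{}\le {R}_{1}$ and $\sup_n\norm{f_n}{\rV^\prime}{}\le {R}_{2}$. Since $d=2$ and \textbf{(H.1)}--\textbf{(H.3)}, \textbf{(H.5)} hold, Lemma \ref{lem-apriori estimates}(1) applied with ${\mu }_{0}=\delta_{{u}_{0,n}}$ (for which $\int_{\rH}\vert x\vert^{p}\,\delta_{{u}_{0,n}}(dx)=\norm{{u}_{0,n}}{\rH}{p}\le {R}_{1}^{p}$) shows that each given martingale solution ${u}_{n}$ of \eqref{E:NS'}, restricted to $[0,T]$, satisfies estimates \eqref{E:H_estimate_NS}--\eqref{E:V_estimate_NS} with constants ${C}_{1}(p,q)$, ${C}_{2}(p)$ depending only on $T,{R}_{1},{R}_{2}$ and hence \emph{not} on $n$.

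Next, with these uniform bounds in hand, I would apply Theorem \ref{thm-continuous dependence-existence} to the sequences $({u}_{0,n})$ and $(f_n)$ and the solutions $({u}_{n})$: it produces a subsequence $({n}_{k})$, a stochastic basis $(\tOmega,\tfcal,\tilde{\mathbb{F}},\tp)$, a $\rK$-cylindrical Wiener process $\tW$, and $\tilde{\mathbb{F}}$-progressively measurable processes $\tu,(\tunk)_{k\ge1}$ with laws on $\mathcal{Z}_{T}$ for which \eqref{E:Skorokhod_appl_2D} holds, $(\tOmega,\tfcal,\tilde{\mathbb{F}},\tp,\tW,\tu)$ is a martingale solution of \eqref{E:NS} satisfying the stated integral identity for all $t\in[0,T]$ and $\rv\in\rV$, and the bounds \eqref{E:tu_H_estimate}, \eqref{E:tu_V_estimate} hold. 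Adding the latter two gives \eqref{tu_estimates_2D} for the chosen $p$; and since the uniform bounds of Lemma \ref{lem-apriori estimates} are available for \emph{every} admissible $p$, rerunning the weak-compactness/Fatou argument of \eqref{E:tu_H_estimate'}--\eqref{E:tu_H_estimate_q'} along the fixed subsequence $({n}_{k})$ for each such $p$ yields \eqref{tu_estimates_2D} in full generality.

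Then I would identify the initial law. Each $\unk$ has the deterministic initial value ${u}_{0,n_{k}}$, so equality of the laws of $\unk$ and $\tunk$ on $\mathcal{Z}_{T}$ forces $\tunk(0)={u}_{0,n_{k}}$ $\tp$-a.s. The $\tp$-a.s. convergence $\tunk\to\tu$ in $\mathcal{Z}_{T}$ takes place in particular in $\ccal([0,T];{\rH}_{w})$, hence $\tunk(0)\rightharpoonup\tu(0)$ in $\rH$, so $\tu(0)=\lim_{k}{u}_{0,n_{k}}={u}_{0}$ $\tp$-a.s.; thus the law of $\tu(0)$ on $\rH$ is $\delta_{{u}_{0}}$ and \eqref{eqn-system} is a martingale solution of \eqref{E:NS'} on $[0,T]$ with initial data ${u}_{0}$, which is the assertion. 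Finally, since $L<\sqrt{2}$, part (3) of Corollary \ref{cor-unique} gives uniqueness in law for \eqref{E:NS'}, so the law of $\tu$ on $\ccal([0,T];{\rH}_{w})\cap {L}^{2}(0,T;\rV)$ is determined by ${u}_{0}$ and $f$ alone, independently of the chosen subsequence; a routine sub-subsequence argument then shows that in fact the whole sequence of laws $\lcal({u}_{n})$ on $\mathcal{Z}_{T}$ converges, which is the form in which this theorem is used in the proof of the $bw$-Feller property.

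The proof is therefore largely bookkeeping once the three quoted results are in place; the two points that need genuine care are (i) the $n$-uniformity of the constants of Lemma \ref{lem-apriori estimates}, which rests on ``weakly convergent $\Rightarrow$ bounded'' together with the fact that $\delta_{{u}_{0,n}}$ has $p$-th moment $\norm{{u}_{0,n}}{\rH}{p}$, and (ii) the identification $\tu(0)={u}_{0}$ through the topology of $\ccal([0,T];{\rH}_{w})$. I expect (ii) to be the most delicate step, since it is the only place where the passage from the deterministic data ${u}_{0,n}$ to the deterministic limit ${u}_{0}$ is actually made, and it relies on the $\ccal([0,T];{\rH}_{w})$ component of $\mathcal{Z}_{T}$ rather than on any of the $L^2$ components.
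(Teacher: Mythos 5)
Your proposal is correct and follows essentially the same route as the paper: weak convergence gives the uniform bounds $R_1,R_2$, Lemma \ref{lem-apriori estimates} then supplies estimates \eqref{E:H_estimate_NS}--\eqref{E:V_estimate_NS} for every $u_n$, and Theorem \ref{thm-continuous dependence-existence} yields the conclusion, with \eqref{tu_estimates_2D} coming from the same lemma. The extra details you supply --- that constant-in-time forces weakly convergent in $\rV^\prime$ are weakly convergent in $L^p(0,T;\rV^\prime)$, and the identification $\tu(0)=u_0$ via the $\ccal([0,T];\rH_w)$ component of $\mathcal{Z}_T$ --- are points the paper leaves implicit, and are filled in correctly.
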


\begin{proof} Let $p$ be any exponent satisfying condition \eqref{eqn-p_cond}.
Since the sequences $\big({u}_{0,n}\big)_{n=1}^\infty  \subset \rH $
and $\, {({f}_{n})}_{n=1}^{\infty } \subset {\rV }^{\prime } \,  $
convergent  weakly in $\rH$ and ${\rV }^{\prime }$, respectively, we infer that there exist ${R}_{1}>0$ and ${R}_{2}>0$  such that
\[
 \sup_{n \in \nat } \norm{{u}_{0,n}}{\rH}{} \le {R}_{1}  \quad \mbox{ and } \quad
\sup_{n \in \nat } \Norm{{f}_{n}}{{\rV }^{\prime }}{} \le {R}_{2}.
\]
By Lemma \ref{lem-apriori estimates} we infer that the processes $\un $, $n \in \nat $,
 satisfy inequalities \eqref{E:H_estimate_NS}-\eqref{E:V_estimate_NS}.
Thus the first part of the assertion follows directly from Theorem \ref{thm-continuous dependence-existence}. Inequality \eqref{tu_estimates_2D} follows again from Lemma \ref{lem-apriori estimates}. The proof of theorem is thus complete.
\end{proof}

\begin{remark}
Although this has not been studied in the present paper, we believe that methods developed here can be used to study the continuous dependence of the solutions on other parameters entering our equations, for instance the linear operator $A$, the nonlinearity $B$ and the diffusion operator $G$.
\end{remark}

\section{Existence of an invariant measure for Stochastic NSEs on 2-dimensional domains}\label{sec-invariant}

In this section we assume that  $d=2$.
Since we are interested in the existence of invariant measures we assume that the domain $\mathcal{O}$ satisfies the Poincar\'e  condition see \eqref{cond-Poincare}.
\footnote{It is well known that this condition  holds  if the domain $\mathcal{O} $ is bounded in some direction, i.e. there exists a vector $h \in \rd $ such that $\mathcal{O} \cap (h+\mathcal{O}) = \emptyset  .$}
However,
our results are true for general  domains for the stochastic damped Navier-Stokes equations, see for instance \cite{Constantin+Ramos_2007}.

Since we assume that $\mathcal{O}$ is a Poincar\'{e} domain, by the Poincar\'{e} inequality, see \eqref{cond-Poincare},  the functional given by the formula
\begin{equation} \label{E:norm_V_Poincare}
\Norm{u}{}{} = \norm{\nabla u}{{L}^{2}}{} , \qquad u \in  \rV ,
\end{equation}
is a norm in the space $\rV $  equivalent to the norm given by \eqref{E:norm_V}.

\it In the sequel, in the space $\rV $ we  consider the norm given by \eqref{E:norm_V_Poincare}. \rm

\bigskip
We aim in this section to prove  that, under some natural assumptions,   problem \eqref{E:NS} has
an invariant measure. Let us fix, as in Assumptions \ref{assumption-second},  a stochastic basis $\bigl( \Omega , \fcal , \mathbb{F} , \p  \bigr) $ with a filtration $\mathbb{F}={\{ \ft \} }_{t \ge 0}$; a canonical cylindrical  Wiener process $W= W(t)$ in a separable Hilbert space $\rK$ defined on the stochastic basis $\bigl( \Omega , \fcal , \mathbb{F} , \p  \bigr) $. We also fix  a function $G: \rH \to \lhs (\rK,{\rV }^{\prime }) $ satisfying condition \textbf{(H.2)} in Assumption \ref{assumption-main} and,
in addition,  the Lipschitz condition \eqref{E:G_Lipsch}  with a  constant $L$  smaller than $\sqrt{2}$,  and inequality \eqref{E:G}  with ${\lambda}_{0}=0$.
The last assumption on ${\lambda }_{0}$ corresponds to the fact that in $\mathcal{O} $ we consider the norm given by \eqref{E:norm_V_Poincare}.
In what follows the initial data $u_0$  will be an element of the space $\rH$.
By $u(t,{u}_{0})$, $t\geq 0$,  we denote the unique solution to the problem \eqref{E:NS'} (defined on the above stochastic basis satisfying Assumptions \ref{assumption-second}).

For any  bounded  Borel  function
$\varphi \in {\mathcal{B}}_{b}(\rH)$ and $t \geq 0$ we define

\begin{equation}
\label{eqn:semigroup}
 (P_t \varphi)({u}_{0}) = \mathbb{E}[\varphi (u(t,{u}_{0}))], \quad {u}_{0} \in \rH.
\end{equation}
Since by Lemma \ref{L:2D_NS_regularity} the trajectories $u(\cdot ,{u}_{0})$ are continuous, $(P_t)_{t \geq 0}$
is a stochastically continuous semigroup on the Banach space $\mathcal{C}_b(\rH)$. This means that for every $\varphi \in {\mathcal{C}}_{b}(\rH)$ and ${u}_{0} \in \rH$
\[
   \lim_{t\to 0} {P}_{t}\varphi ({u}_{0}) = {u}_{0} .
\]

As a consequence of Corollary \ref{cor-unique} we have the following result.

\begin{prop} \label{prop-Markov}
 The family $u(t,{u}_{0})$, $t\geq 0$, ${u}_{0}\in \rH$ is
 Markov. In particular, $P_{t+s}=P_tP_s$ for $t,s \ge 0$.
\end{prop}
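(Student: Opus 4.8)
The plan is to derive the Markov property from pathwise uniqueness of the strong solution (Corollary \ref{cor-unique}(1)) together with the usual flow/cocycle argument, and then read off the semigroup identity $P_{t+s}=P_tP_s$ as an immediate consequence. First I would fix $u_0\in\rH$, $s,t\geq 0$, and denote by $u(\cdot,u_0)$ the pathwise unique strong solution on the given stochastic basis $(\Omega,\fcal,\mathbb F,\p,W)$ furnished by Corollary \ref{cor-unique}(1). The key structural fact is the \emph{flow property}
\begin{equation*}
  u(t+s,u_0)=u\bigl(t,\,\theta_s;\,u(s,u_0)\bigr),\qquad \p\text{-a.s.},
\end{equation*}
where the right-hand side denotes the solution, started at time $s$ from the (random) initial value $u(s,u_0)$, driven by the shifted Wiener process $\widetilde W(r):=W(s+r)-W(s)$ with respect to the shifted filtration $\widetilde{\mathcal F}_r:=\fcal_{s+r}$. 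To prove this, one checks that the process $v(r):=u(s+r,u_0)$, $r\geq 0$, solves equation \eqref{E:NS'} on $(\Omega,\fcal,\widetilde{\mathbb F},\p,\widetilde W)$ with initial condition $v(0)=u(s,u_0)$ — this is just a reindexing of the integral identity \eqref{E:mart_sol_int_identity} and of the energy inequality \eqref{E:mart_sol_energy-ineq}, using that $\widetilde W$ is again a cylindrical Wiener process on the shifted basis, which satisfies Assumption \ref{assumption-second}. By the pathwise uniqueness statement of Corollary \ref{cor-unique}(1), applied on this shifted basis with the $\widetilde{\mathcal F}_0$-measurable initial datum $u(s,u_0)$, $v$ coincides $\p$-a.s. with the canonical solution started from $u(s,u_0)$, which yields the displayed flow identity.

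Granting the flow property, the Markov property is standard. For $\varphi\in{\mathcal B}_b(\rH)$ compute, conditioning on $\fcal_s$,
\begin{equation*}
  \mathbb E\bigl[\varphi(u(t+s,u_0))\mid\fcal_s\bigr]
  =\mathbb E\bigl[\varphi\bigl(u(t,\theta_s;u(s,u_0))\bigr)\mid\fcal_s\bigr]
  =(P_t\varphi)\bigl(u(s,u_0)\bigr),\qquad \p\text{-a.s.}
\end{equation*}
The second equality uses that $\widetilde W$ is independent of $\fcal_s$ while $u(s,u_0)$ is $\fcal_s$-measurable, so one may apply the standard freezing lemma (independence/substitution): the map $x\mapsto u(t,\theta_s;x)(\omega)$, built measurably from $\widetilde W$, has conditional expectation $(P_t\varphi)(x)$ when evaluated at the $\fcal_s$-measurable random point $x=u(s,u_0)$. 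Measurability of $x\mapsto (P_t\varphi)(x)$ is ensured because $x\mapsto u(t,x)$ is a measurable (indeed, by Theorem \ref{thm-weak continuous dependence-existence_2D}, suitably continuous) map into $\rH$; continuity of trajectories recorded in Lemma \ref{L:2D_NS_regularity} and the uniqueness in law of Corollary \ref{cor-unique}(3) guarantee that this conditional law depends only on $x$, not on the particular basis. Taking $\mathbb E$ on both sides gives
\begin{equation*}
  (P_{t+s}\varphi)(u_0)=\mathbb E\bigl[\varphi(u(t+s,u_0))\bigr]
  =\mathbb E\bigl[(P_t\varphi)(u(s,u_0))\bigr]=\bigl(P_s(P_t\varphi)\bigr)(u_0),
\end{equation*}
i.e. $P_{t+s}=P_sP_t=P_tP_s$, the last equality by symmetry of the argument (or simply because $s,t$ were arbitrary).

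The main obstacle is the measurability and well-posedness bookkeeping around the freezing step: one must know that the solution map $x\mapsto u(t,x)$ is Borel measurable from $\rH$ to $\ccal([0,T];\rH_w)\cap L^2(0,T;\rV)$ and that it can be realized jointly measurably in $(x,\omega)$, so that substituting the random initial value $u(s,u_0)$ is legitimate and the resulting process is adapted. This is where uniqueness in law (Corollary \ref{cor-unique}(3)) and the continuous-dependence Theorem \ref{thm-weak continuous dependence-existence_2D} do the real work — pathwise uniqueness plus the Yamada–Watanabe-type principle upgrades the martingale solutions to a genuine measurable strong solution flow, which is precisely what makes $(P_t)$ a Markov semigroup rather than merely a family of transition kernels defined up to law. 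Everything else (the reindexing of the SPDE, the independence of the shifted noise from $\fcal_s$, and the final two lines of expectation-chasing) is routine.
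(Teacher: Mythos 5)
Your argument is correct and is precisely the standard proof (flow property via pathwise uniqueness on the shifted basis, followed by the freezing/substitution lemma for the conditional expectation) that the paper itself omits, stating only that the proof is standard and citing Da Prato--Zabczyk and Peszat--Zabczyk, where exactly this argument is carried out. The one implicit point worth keeping in mind is that time-homogeneity of the shifted equation relies on $f$ being constant in time, which is indeed the standing convention of the section where the semigroup is defined.
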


The proof of Proposition \ref{prop-Markov} is standard and thus omitted, see e.g.
\cite{A+Brz+Wu_2010}, \cite[Section 9.2]{DaPrato+Zabczyk_1992},  \cite[Section 9.7]{Peszat_Z_2007}.

\begin{prop}
\label{prop-Feller_bw} The  semigroup $P_t$ is $bw$-Feller, i.e.
if  $\phi:\rH\to\mathbb{R}$ is a bounded sequentially weakly continuous  function and  $t>0$ then
$P_t\phi:\rH\to\mathbb{R}$ is also a bounded sequentially weakly continuous  function. In particular, if
 ${u}_{0n} \to {u}_{0}$ weakly in $\rH$ then
\[
P_t\phi({u}_{0n}) \to P_t\phi({u}_{0}).\]
\end{prop}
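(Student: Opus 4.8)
The plan is to prove the $bw$-Feller property by reducing it to the stability result of Theorem \ref{thm-weak continuous dependence-existence_2D}. Fix $t>0$ and a bounded sequentially weakly continuous function $\phi:\rH\to\mathbb{R}$. We must show that $P_t\phi$ is sequentially weakly continuous, i.e.\ that for every sequence $(u_{0,n})\subset\rH$ converging weakly in $\rH$ to some $u_0$ we have $P_t\phi(u_{0,n})\to P_t\phi(u_0)$. Since weakly convergent sequences are bounded, $\sup_n|u_{0,n}|_\rH=:R_1<\infty$; take $f_n\equiv f\in\rV^\prime$ constant so the external forces trivially converge weakly and satisfy the required uniform bound. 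On the given stochastic basis $(\Omega,\fcal,\mathbb{F},\p,W)$ let $u_n(t)=u(t,u_{0,n})$ be the unique strong solution of \eqref{E:NS'} with initial datum $u_{0,n}$ (its existence and pathwise uniqueness follow from Corollary \ref{cor-unique}, using $L<\sqrt2$); by Lemma \ref{lem-apriori estimates} these processes satisfy the estimates \eqref{E:H_estimate_NS}--\eqref{E:V_estimate_NS}, so the hypotheses of Theorem \ref{thm-weak continuous dependence-existence_2D} are met.

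Next I would invoke Theorem \ref{thm-weak continuous dependence-existence_2D}: for every $T>0$ (in particular $T=t$) there exist a subsequence $(n_k)$, a new stochastic basis $(\tOmega,\tfcal,\tilde{\mathbb{F}},\tp)$, a cylindrical Wiener process $\tW$ on it, and progressively measurable processes $\tu,(\tunk)_{k\ge1}$, with $\tunk$ having the same law as $u_{n_k}$ on $\mathcal{Z}_T$, with $\tunk\to\tu$ in $\mathcal{Z}_T$ $\tp$-a.s., and with $(\tOmega,\tfcal,\tilde{\mathbb{F}},\tp,\tW,\tu)$ a martingale solution of \eqref{E:NS'} on $[0,T]$ with initial law $\delta_{u_0}$. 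By Corollary \ref{cor-unique}(3) — uniqueness in law of the martingale solution — the law of $\tu(t)$ on $\rH$ equals the law of $u(t,u_0)$, and the law of $\tunk(t)$ equals the law of $u_{n_k}(t)=u(t,u_{0,n_k})$. Here I would note that evaluation at a fixed time $t$ is a well-defined $\rH$-valued (weakly) measurable map on $\mathcal{Z}_T$, since elements of $\mathcal{Z}_T$ lie in $\ccal([0,T];\rH_w)$, so $\tunk(t)$ and $\tu(t)$ are genuine $\rH$-valued random variables and the equalities of laws make sense.

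Now the convergence $\tunk\to\tu$ in $\mathcal{Z}_T$ $\tp$-a.s.\ implies, via the $\ccal([0,T];\rH_w)$ component of $\mathcal{Z}_T$, that $\tunk(t)\to\tu(t)$ weakly in $\rH$, $\tp$-a.s. Since $\phi$ is sequentially weakly continuous, $\phi(\tunk(t))\to\phi(\tu(t))$ $\tp$-a.s., and since $\phi$ is bounded, the bounded convergence theorem gives $\tilde{\mathbb{E}}[\phi(\tunk(t))]\to\tilde{\mathbb{E}}[\phi(\tu(t))]$. Using equality of laws, $\tilde{\mathbb{E}}[\phi(\tunk(t))]=\mathbb{E}[\phi(u(t,u_{0,n_k}))]=P_t\phi(u_{0,n_k})$ and $\tilde{\mathbb{E}}[\phi(\tu(t))]=\mathbb{E}[\phi(u(t,u_0))]=P_t\phi(u_0)$, so $P_t\phi(u_{0,n_k})\to P_t\phi(u_0)$ along the subsequence. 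Finally, to upgrade from subsequence to full sequence, I would use the standard subsequence principle: the argument above applies to \emph{any} subsequence of $(u_{0,n})$ (any subsequence of a weakly convergent sequence is weakly convergent to the same limit), extracting from it a further subsequence along which $P_t\phi(u_{0,\cdot})\to P_t\phi(u_0)$; since every subsequence of the real sequence $(P_t\phi(u_{0,n}))_n$ has a further subsequence converging to the same limit $P_t\phi(u_0)$, the whole sequence converges. Boundedness of $P_t\phi$ is immediate from $\|P_t\phi\|_\infty\le\|\phi\|_\infty$.

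The main obstacle is bookkeeping rather than depth: one must be careful that the initial laws in Theorem \ref{thm-weak continuous dependence-existence_2D} are the Dirac masses $\delta_{u_{0,n}}$ and $\delta_{u_0}$ (so that "martingale solution with initial law $\delta_{u_0}$" is the same object whose law at time $t$ is pinned down by Corollary \ref{cor-unique}(3)), and that the evaluation-at-$t$ functional interacts correctly with the topology of $\mathcal{Z}_T$ and with the equality of laws on $\mathcal{Z}_T$ — in particular that $\phi\circ(\text{eval}_t)$ is bounded measurable on $\mathcal{Z}_T$ so that equality of laws on $\mathcal{Z}_T$ transfers to equality of $\tilde{\mathbb{E}}[\phi(\cdot(t))]$. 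The genuine mathematical content — weak continuity of the solution map in the Skorokhod sense — has already been absorbed into Theorem \ref{thm-weak continuous dependence-existence_2D} and uniqueness in law into Corollary \ref{cor-unique}, so the remaining work is to assemble these pieces and run the subsequence-extraction argument cleanly.
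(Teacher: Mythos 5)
Your proposal is correct and follows essentially the same route as the paper's proof: reduce to the stability/Skorokhod result of Theorem \ref{thm-weak continuous dependence-existence_2D}, use the $\ccal([0,T];\rH_w)$ component of $\mathcal{Z}_T$ to get a.s.\ weak convergence of $\tunk(t)$ to $\tu(t)$, pass to the limit by bounded convergence, identify the limit via uniqueness in law (Corollary \ref{cor-unique}), and upgrade from the subsequence to the full sequence by the sub-subsequence principle. The minor differences (taking $T=t$ rather than an auxiliary $T>t$, and your explicit remarks on the measurability of the evaluation map) do not change the argument.
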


\begin{proof}[Proof of Proposition \ref{prop-Feller_bw}]
Let us choose and fix $t>0$, ${u}_{0}\in \rH$ and an $\rH$-valued sequence $({u}_{0n})$ that is weakly convergent to ${u}_{0}$ in $\rH$.   Let also
 $\phi:\rH\to\mathbb{R}$ be a bounded sequentially weakly continuous  function. Let us choose an auxiliary time $T\in (t,\infty)$.

Since obviously the function $P_t\phi:\rH\to\mathbb{R}$ is  bounded, we only need to prove that it is  sequentially weakly continuous.

Let $\un (\cdot ) = u (\cdot ,{u}_{0n})$, respectively $u (\cdot ) = u (\cdot ,{u}_{0})$, be a strong solution of problem \eqref{E:NS'} on $[0,\infty )$  with the initial data  $u_{0n}$, resp. $u_{0}$.
We assume that these processes are defined on the stochastic basis  $( \Omega , \fcal ,  {\mathbb{F}}, \p , W )$.
By Theorem \ref{thm-weak continuous dependence-existence_2D} there exist (depending on $T$)
\begin{itemize}
\item a subsequence $({n}_{k}{)}_{k}$,
\item a stochastic basis
$\bigl( \tOmega , \tfcal ,  {\tilde{\mathbb{F}}} , \tp  \bigr) $, where $\tilde{\mathbb{F}}={\{ \tfcal {}_{s} \} }_{s \in [0,T]}$,
\item a cylindrical Wiener process $\tW=\tW (s)$, $s\in [0,T]$ defined on this basis,
\item and  an $\mathbb{F}$-progressively measurable processes $\tu (s)$,  $\big( \tunk (s)\big)_{k \ge 1}, s \in [0,T]$ (defined on this basis) with  laws supported in
$ \mathcal{Z}_{T} $ such that
\end{itemize}
\begin{equation}   \label{E:Skorokhod_appl_2D_bw_Feller}
  \tunk \mbox{ \it has the same law as } \unk \mbox{ \it on } \mathcal{Z}_{T}
    \mbox{ \it and } \tunk \to \tu \mbox{ \it in } \mathcal{Z}_{T},
    \quad  \tp \mbox{ - \it a.s.}
\end{equation}
and  the  system
\begin{equation}   \label{eqn-system_bw_Feller}
\bigl( \tOmega , \tfcal ,  {\tilde{\mathbb{F}}}, \tp , \tW,\tu  \bigr)
\end{equation}
is   a martingale solution to  problem  \eqref{E:NS'} on the interval $[0,T]$ with the initial data $u_0$.

In particular, by \eqref{E:Skorokhod_appl_2D_bw_Feller}, $\tp$-almost surely
\[
 \tunk (t) \to \tu (t) \ \mbox{ weakly in } \rH .
\]
Since the function $\phi:\rH\to\mathbb{R}$ is sequentially weakly continuous, we infer that $\tp $-a.s.,
\[
\phi (\tunk (t) ) \to \phi (\tu (t)) \ \mbox{  in } \mathbb{R}.
\]
Therefore,  since the function $\phi:\rH\to\mathbb{R}$ is also bounded, by the Lebesgue Dominated Convergence Theorem we infer that
\begin{equation}  \label{E:bw_Feller_1}
\lim_{k\to \infty }\tilde{\mathbb{E}}[\phi (\tunk (t) )] = \tilde{\mathbb{E}}[\phi(\tu (t))].
\end{equation}
From the equality of laws of $\tunk $ and $\unk $, $k \in \nat $, on the space ${\mathcal{Z}}_{T}$ we infer that
\begin{equation} \label{E:bw_Feller_2}
\tilde{\mathbb{E}}[\phi (\tunk (t) )] = \mathbb{E}[\phi (\unk (t) )] = {P}_{t}\phi ({u}_{0{n}_{k}}).
\end{equation}
Since by assumptions $( \Omega , \fcal ,  {\mathbb{F}}, \p , W,u )$ is  a martingale  solution of equation \eqref{E:NS'} with the initial data $u_0$
and $\bigl( \tOmega , \tfcal ,  {\tilde{\mathbb{F}}}, \tp , \tW,\tu  \bigr)$ is also a martingale  solution with the initial of equation \eqref{E:NS'} with the initial data $u_0$
 and since the solution of \eqref{E:NS'} is unique in law, we infer that
\[
  \mbox{the processes $u$ and $\tu $ have the same law on the space } {\mathcal{Z}}_{t}.
\]
Hence
\begin{equation}  \label{E:bw_Feller_3}
     \tilde{\mathbb{E}}[\phi(\tu (t))] = \mathbb{E}[\phi(u (t))] = {P}_{t}\phi ({u}_{0}).
\end{equation}
Thus by \eqref{E:bw_Feller_1}, \eqref{E:bw_Feller_2} and \eqref{E:bw_Feller_3}, we infer that
\[
   \lim_{k\to \infty } {P}_{t}\phi ({u}_{0{n}_{k}}) =  {P}_{t}\phi ({u}_{0}).
\]
Using the sub-subsequence argument, we infer that the whole sequence ${({P}_{t}\phi ({u}_{0n}))}_{n\in \nat }$ is convergent and
\[
   \lim_{n\to \infty } {P}_{t}\phi ({u}_{0n}) =  {P}_{t}\phi ({u}_{0}),
\]
which completes the proof of Proposition \ref{prop-Feller_bw}.
\end{proof}

\begin{remark} \rm
From inequality \eqref{E:Poincare-NS-ineq-2D}
 and the Poincar\'{e} inequality \eqref{cond-Poincare}, it follows that the following inequality holds for the strong solution $u$ of problem \eqref{E:NS'} defined on the stochastic basis  $( \Omega , \fcal ,  {\mathbb{F}}, \p , W )$
\begin{equation}
  \int_{0}^{t} \mathbb{E} {|u(s)|}_{\rH}^{2} \, ds
\le  \frac{2}{C\eta }{|u_0|}_{\rH}^{2}
 + \frac{2}{C\eta } \Bigl( \frac{2}{\eta} \norm{f }{{\rV }^\prime}{2} +\varrho \Bigr) t,
\qquad t \ge 0 .
 \label{E:ineq_apriori}
\end{equation}
\end{remark}

\begin{proof} [Proof of inequality \eqref{E:ineq_apriori}]
Let us fix $t \ge 0 $. By the Poincar\'{e} inequality \eqref{cond-Poincare} for almost all $s \in [0,t]$,
\[
 {|u(s)|}_{\rH}^{2} \le \frac{1}{C} {|\nabla u(s)|}_{{L}^{2}}^{2}.
\]
By \eqref{E:Poincare-NS-ineq-2D}, in particular, we obtain
\[
  \frac{\eta}{2}  \mathbb{E}\, \int_{0}^{t}\norm{\nabla u(s)}{}{2} \, ds
  \leq {|u_0|}_{\rH}^{2} + \Bigl( \frac{2}{\eta} \vert f\vert^2_{{\rV }^\prime} +\varrho  \Bigr) {t}
\]
Hence we infer that
\[
  \int_{0}^{t} \mathbb{E} {|u(s)|}_{\rH}^{2} \, ds
\le \frac{1}{C}  \mathbb{E}\, \int_{0}^{t}\norm{\nabla u(s)}{}{2} \, ds
\le  \frac{2}{C\eta }{|u_0|}_{\rH}^{2}
 + \frac{2}{C\eta } \Bigl( \frac{2}{\eta} \norm{f }{{\rV }^\prime}{2} +\varrho \Bigr) t , \;\;\ t\geq 0,
\]
i.e. inequality \eqref{E:ineq_apriori} holds.
\end{proof}

Using inequality \eqref{E:ineq_apriori} we deduce the following result.

\begin{cor} \label{cor-Krylov_Bogoliubov_cond}
Let ${u}_{0} \in \rH$ and let $u(t)$, $t\ge 0$, be the unique solution to the problem \eqref{E:NS'} starting from ${u}_{0}$. Then there exists ${T}_{0} \ge 0 $ such that for every $\eps >0$ there exists $R>0$ such that
\begin{equation}
  \sup_{T\ge {T}_{0}} \frac{1}{T} \int_{0}^{T} ({P}^{\ast }_{s}{\delta }_{{u}_{0}}) (\rH \setminus {\bar \ball }_{R}) \, ds \le \eps ,
\end{equation}
where ${\bar \ball }_{R}= \{ \rv\in \rH: \ {|\rv|}_{\rH} \le R \} $.
\end{cor}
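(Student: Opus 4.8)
The plan is to deduce Corollary~\ref{cor-Krylov_Bogoliubov_cond} directly from the a'priori bound \eqref{E:ineq_apriori} by a Chebyshev-type argument. First I would recall that by definition of the transition semigroup and the dual operators,
\[
\int_0^T \bigl( P_s^\ast \delta_{u_0}\bigr)(\rH\setminus \bar\ball_R)\,ds
= \int_0^T \p\bigl(\,|u(s)|_\rH > R\,\bigr)\,ds ,
\]
since $P_s^\ast \delta_{u_0}$ is nothing but the law of $u(s;u_0)$ on $\rH$. Then by the Chebyshev (Markov) inequality, for every $s\ge 0$ we have $\p(|u(s)|_\rH > R) \le R^{-2}\,\e\,|u(s)|_\rH^2$, and hence
\[
\frac1T\int_0^T \bigl( P_s^\ast \delta_{u_0}\bigr)(\rH\setminus \bar\ball_R)\,ds
\le \frac1{R^2}\cdot\frac1T\int_0^T \e\,|u(s)|_\rH^2\,ds .
\]

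Next I would apply inequality \eqref{E:ineq_apriori}, which gives exactly
\[
\int_0^T \e\,|u(s)|_\rH^2\,ds \le \frac{2}{C\eta}|u_0|_\rH^2
+ \frac{2}{C\eta}\Bigl( \frac{2}{\eta}\norm{f}{{\rV}^\prime}{2}+\varrho\Bigr) T ,
\qquad T\ge 0 .
\]
Dividing by $T$, the first term decays like $1/T$ while the second term is constant in $T$; thus there is a constant $M := \frac{2}{C\eta}\bigl( \frac{2}{\eta}\norm{f}{{\rV}^\prime}{2}+\varrho\bigr)$ and, for any fixed ${T}_{0}>0$, a bound
\[
\sup_{T\ge {T}_{0}} \frac1T\int_0^T \e\,|u(s)|_\rH^2\,ds
\le \frac{2}{C\eta\, {T}_{0}}|u_0|_\rH^2 + M =: {C}_{0}({T}_{0},u_0,f,\varrho) < \infty .
\]
Combining the two displays, for every $T\ge {T}_{0}$ we get $\frac1T\int_0^T \bigl(P_s^\ast\delta_{u_0}\bigr)(\rH\setminus\bar\ball_R)\,ds \le {C}_{0}/R^2$, so given $\eps>0$ it suffices to choose $R>\sqrt{{C}_{0}/\eps}$. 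One may simply take ${T}_{0}=1$ (any positive value works), which yields the claim.

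There is essentially no obstacle here: the corollary is a routine consequence of the already-established integrated energy estimate. The only small points to be careful about are (i) the identification $P_s^\ast\delta_{u_0} = \mathcal{L}(u(s;u_0))$, which is immediate from \eqref{eqn:semigroup} by duality, and (ii) the measurability in $s$ of $s\mapsto \e\,|u(s)|_\rH^2$ and of $s\mapsto (P_s^\ast\delta_{u_0})(\rH\setminus\bar\ball_R)$, which follows from the continuity (indeed weak continuity) of the trajectories together with Fubini's theorem, so that the time integrals make sense. With these in place the estimate closes exactly as above.
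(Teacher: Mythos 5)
Your proof is correct and follows essentially the same route as the paper: the Chebyshev inequality reduces the claim to the time-averaged second-moment bound, which is then controlled by the a'priori estimate \eqref{E:ineq_apriori}, with the $|u_0|^2_\rH/T$ term absorbed by taking $T\ge T_0$. The additional remarks on identifying $P_s^\ast\delta_{u_0}$ with the law of $u(s;u_0)$ and on measurability in $s$ are sound but not needed beyond what the paper already takes for granted.
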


\begin{proof}
Using the Chebyshev inequality and inequality \eqref{E:ineq_apriori}  we infer that for every $T \ge 0$ and $R>0 $
\begin{eqnarray*}
\frac{1}{T} \int_{0}^{T} ({P}^{\ast }_{s}{\delta }_{{u}_{0}}) (\rH \setminus {\bar \ball }_{R}) \, ds
&=&\frac{1}{T} \int_{0}^{T} \mathbb{P} (\{ {|u(s)|}_{\rH} >R \} ) \, ds
\le \frac{1}{T{R}^{2}} \int_{0}^{T} \mathbb{E}{|u(s)|}_{\rH}^{2} \, ds \\
&\le & \frac{1}{T{R}^{2}} \Bigl[ \frac{2}{C\eta }{|{u}_{0}|}_{\rH}^{2}
 + \frac{2}{C\eta } \Bigl( \frac{2}{\eta} \norm{f }{{\rV }^\prime}{2} +\varrho \Bigr) T \Bigr] \\
&=&  \frac{1}{T{R}^{2}} \frac{2}{C\eta }{|{u}_{0}|}_{\rH}^{2}
+\frac{1}{{R}^{2}} \frac{2}{C\eta }\Bigl( \frac{2}{\eta} \norm{f }{{\rV }^\prime}{2} +\varrho \Bigr) .
\end{eqnarray*}
Thus the assertion follows.
\end{proof}

By Proposition \ref{prop-Feller_bw}, Corollary \ref{cor-Krylov_Bogoliubov_cond} and the Maslowski-Seidler Theorem \cite[Proposition 3.1]{Maslowski+Seidler_1999}
 we deduce the following main result of our paper.

\begin{theorem}\label{thm-main}
Let $\mathcal{O} \subset {\mathbb{R}}^{2}$ be a Poincar\'{e} domain. Let assumptions \textbf{(H.1)}-\textbf{(H.2)} and \textbf{(H.5)} be satisfied.
In addition  we assume that the function $G$ satisfies condition \eqref{E:G_Lipsch}  with  $L<\sqrt{2}$
 and  inequality \eqref{E:G}  with ${\lambda}_{0}=0$.
Then there exists an invariant measure of the semigroup ${({P}_{t})}_{t\ge 0 }$ defined by \eqref{eqn:semigroup}, i.e. a probability measure $\mu $ on $\rH$ such that
\[
   {P}^{\ast }_{t} \mu = \mu .
\]
\end{theorem}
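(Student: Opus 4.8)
The plan is to obtain Theorem~\ref{thm-main} by feeding the two properties of the transition semigroup established in this section — the $bw$-Feller property (Proposition~\ref{prop-Feller_bw}) and the Krylov--Bogoliubov type concentration estimate (Corollary~\ref{cor-Krylov_Bogoliubov_cond}) — into the abstract criterion of Maslowski and Seidler, \cite[Proposition~3.1]{Maslowski+Seidler_1999}: a $bw$-Feller semigroup $(P_t)_{t\ge 0}$ on $\rH$ admits an invariant probability measure as soon as there is a probability measure $\nu$ on $\rH$ and a sequence $T_n\uparrow\infty$ for which the family $\bigl\{\frac1{T_n}\int_0^{T_n}P^{\ast}_{s}\nu\,ds:\ n\ge 1\bigr\}$ is tight on $(\rH,bw)$.

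First I would fix an arbitrary point $u_0\in\rH$, take $\nu:=\delta_{u_0}$, and choose $T_n:=T_0+n$ with $T_0$ the time provided by Corollary~\ref{cor-Krylov_Bogoliubov_cond}, so that
\[
  \mu_n:=\frac1{T_n}\int_0^{T_n}P^{\ast}_{s}\delta_{u_0}\,ds ,\qquad n\ge 1 .
\]
The point to check is that tightness of $\{\mu_n\}$ on the bounded--weak space $(\rH,bw)$ is equivalent to uniform concentration on norm balls: each closed ball $\bar{\ball}_R=\{\rv\in\rH:\ |\rv|_\rH\le R\}$ is weakly compact (reflexivity and separability of $\rH$, Banach--Alaoglu--Kakutani), and the weak topology on $\bar{\ball}_R$ is metrizable and agrees there with the $bw$ topology. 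Hence it suffices to produce, for each $\eps>0$, an $R>0$ with $\sup_{n\ge 1}\mu_n(\rH\setminus\bar{\ball}_R)\le\eps$. But this is precisely what Corollary~\ref{cor-Krylov_Bogoliubov_cond} delivers, since
\[
  \sup_{n\ge 1}\mu_n(\rH\setminus\bar{\ball}_R)\le\sup_{T\ge T_0}\frac1T\int_0^T (P^{\ast}_{s}\delta_{u_0})(\rH\setminus\bar{\ball}_R)\,ds .
\]
Thus $\{\mu_n:\ n\ge 1\}$ is tight on $(\rH,bw)$.

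It then remains only to apply \cite[Proposition~3.1]{Maslowski+Seidler_1999}, using the $bw$-Feller property of Proposition~\ref{prop-Feller_bw}, to conclude that $(P_t)_{t\ge 0}$ has an invariant probability measure $\mu$ on $\rH$, i.e.\ $P^{\ast}_{t}\mu=\mu$ for every $t\ge 0$, which is the assertion of Theorem~\ref{thm-main}.

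Finally, a word on where the difficulty really sits. In this last step nothing is hard beyond the routine identification of tightness on $(\rH,bw)$ with uniform tightness over norm balls just described. The substantial work has been done earlier: the $bw$-Feller property of Proposition~\ref{prop-Feller_bw} rests on the weak continuous-dependence result Theorem~\ref{thm-weak continuous dependence-existence_2D}, hence on the tightness and Jakubowski--Skorokhod arguments of Section~\ref{sec-Cont-dependence}, while Corollary~\ref{cor-Krylov_Bogoliubov_cond} rests on the energy estimate \eqref{E:Poincare-NS-ineq-2D}, which is exactly the place where the Poincar\'e inequality and the hypothesis $\lambda_0=0$ enter decisively.
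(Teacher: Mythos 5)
Your proposal is correct and follows exactly the route the paper takes: the paper deduces Theorem \ref{thm-main} in one line from Proposition \ref{prop-Feller_bw}, Corollary \ref{cor-Krylov_Bogoliubov_cond} and \cite[Proposition 3.1]{Maslowski+Seidler_1999}, and your write-up merely (and correctly) fills in the routine verification that the uniform ball-concentration estimate of Corollary \ref{cor-Krylov_Bogoliubov_cond} yields tightness of the time-averaged measures on $(\rH,bw)$, using weak compactness of closed balls.
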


\begin{remark}
In this section we have used strong solutions. In particular, in order to show a global inequality \eqref{E:ineq_apriori} which was a basis for Corollary \ref{cor-Krylov_Bogoliubov_cond}. However, we could have easily avoided this. For instance, instead of the global inequality \eqref{E:ineq_apriori} we could prove that every martingale solution $( \Omega , \fcal ,  {\mathbb{F}}, \p , W,u )$ of equation \eqref{E:NS'} with the initial data $u_0$ on the time interval $[0,T]$ satisfies inequality \eqref{E:ineq_apriori} for only $t\in [0,T]$ but with constants $C$, $\eta$ and $\rho$ independent of $T$.
\end{remark}

\appendix

\section{Uniform estimates of the solutions Galerkin approximatin equations} \label{App:Galerkin-est}

Let us recall that the proof of existence of a martingale solution of the Navier-Stokes equations, given in \cite{Brz+Motyl_2013}, is based on the  Faedo-Galerkin approximation in the space ${H}_{n}$, see (5.2) in the cited paper. In order to continue we need to choose and fix a stochastic basis and thus we assume that Assumption \ref{assumption-second} holds. We also fix an $\mathcal{F}_0$-measurable $\rH$-valued random variable. Then the $n$-th equation is the following one in the space ${H}_{n}$.

\begin{equation} \label{E:Galerkin}
\begin{cases}
  & d \un (t) =  - \bigl[ \Pn \acal \un (t)  + \Bn  \bigl(\un (t)\bigr)  - \Pn f(t) \bigr] \, dt
   + \Pn G\bigl( \un (t)\bigr) \, dW(t),   \quad t > 0 , \\
  &  \un (0) = \Pn {u }_{0} .
\end{cases}
\end{equation}
Recall that ${H}_{n}$ is a finite dimensional subspace spanned  by the $n$ first eigenvectors of the operator $L$ given by (2.19) in  \cite{Brz+Motyl_2013}, $\Pn $ is defined by \cite[(2.25)]{Brz+Motyl_2013} and ${B}_{n}$ is defined on p. 1636 in \cite{Brz+Motyl_2013}.
For details see \cite[Lemmas 2.3 and 2.4]{Brz+Motyl_2013}.
In particular, $\Pn $ restricted to $\rH$ is the orthogonal projection.
The  existence of a solution of equation \eqref{E:Galerkin} is guaranteed by Lemma 5.2 in \cite{Brz+Motyl_2013}.

The following result corresponds to Lemma 5.3 from \cite{Brz+Motyl_2013}.
The proof of estimates \eqref{E:H_estimate_Galerkin_p},  \eqref{E:HV_estimate_Galerkin}  and \eqref{E:Poincare_Galerkin},  is similar to the proof of estimates (5.4), (5.5) and (5.6) from Appendix A in \cite{Brz+Motyl_2013}. However, we provide the details to indicate the dependence of appropriate constants on the data, which will be important in the proof of  continuous dependence of the solutions of the Navier-Stokes equations on the initial state ${u}_{0}$ and the external forces $f$.
 Moreover, if   $\ocal $ is the Poincar\'{e} domain,  we prove a new  estimate, see \eqref{E:Poincare_Galerkin}. This  estimate is of crucial importance in the proof of the existence of invariant measure.
Recall that we have put $\frac{\eta }{2-\eta }=\infty$ when $\eta=2$.

\begin{lemmaA} \label{L:Galerkin_estimates}
Let Assumption \ref{assumption-second} and parts  (\textbf{H.2}),(\textbf{H.3}) and (\textbf{H.5}) of Assumption \ref{assumption-main} be satisfied. In particular, we assume  that  $p$ satisfies \eqref{eqn-p_cond}, i.e.
\begin{equation*}
p\in  \bigl[ 2, 2+ \frac{\eta }{2-\eta } \bigr) ,
\end{equation*}
where $\eta \in (0,2]$ is given in  \textbf{(H.2)}.
\begin{itemize}
\item[(1) ]
Then for every $T>0$,    $\nu$, $R_1$ and $R_2$  there exist constants  $C_1(p)$, ${\tilde{C}}_2(p)$,
 ${C}_{2}(p)$,  such that if ${u}_{0} \in {L}^{p}(\Omega ,\mathcal{F}_0,\rH) $,
${f} \in  {L}^{p}([0,\infty ); \rV^\prime )$ satisfy $ \mathbb{E} [ \norm{u_0 }{\rH}{p} ]  \leq R_1$
 and $ \vert f{\vert}_{{L}^{p}(0,T;\rV^\prime)}\leq R_2$, then every    solution $\un $ of Galerkin equation  \eqref{E:Galerkin} with the initial data ${u}_{0}$ and the external force $f$
 satisfies the following  estimates
\begin{equation} \label{E:H_estimate_Galerkin_p}
\sup_{n \in \nat }\mathbb{E} \bigl( \sup_{s\in [0,T] } {|\un (s)|}_{\rH}^{p} \bigr) \le {C}_{1}(p)
\end{equation}
and
\begin{equation} \label{E:HV_estimate_Galerkin}
\sup_{n \in \nat } \mathbb{E} \bigl[ \int_{0}^{T} {|\un (s)|}_{\rH}^{p-2} \norm{ \nabla \un (s)}{}{2} \, ds \bigr] \le {\tilde{C}}_{2}(p)  ,
\end{equation}
and
\begin{equation} \label{E:V_estimate_Galerkin}
\sup_{n \in \nat } \mathbb{E} \bigl[ \int_{0}^{T} \norm{ \nabla \un (s)}{}{2} \, ds \bigr] \le  {C}_{2}(p).
\end{equation}
\item[(2) ]
Moreover, if $\ocal $ is a Poincar\'{e} domain and inequality \eqref{E:G} holds with ${\lambda }_{0}=0$, then for every $t>0$
\begin{equation}  \label{E:Poincare_Galerkin}
 \sup_{n \in \nat } \biggl( \mathbb{E} [\, \norm{\un (t)}{\rH}{2} \, ]  +  \frac{\eta }{2} \e \biggl[ \int_{0}^{t} \norm{\nabla \un (s)}{}{2} \, ds \biggr]  \biggr)
   \le \mathbb{E} [\, \norm{{u}_{0}}{\rH}{2} \, ]
 + \frac{2}{\eta } \int_{0}^{t} \norm{f(s)}{{\rv }^{\prime }}{2} \, ds + \rho t.
\end{equation}
\end{itemize}
\end{lemmaA}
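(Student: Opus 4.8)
The plan is to apply the classical (finite-dimensional) It\^o formula to the function $F(x)=|x|_\rH^p$ and to the Galerkin process $\un$ --- here, unlike in the proof of Lemma \ref{lem-apriori estimates}, the infinite-dimensional It\^o lemma is not needed, since $\un$ lives in the finite-dimensional space $H_n$ --- and then to run a Gronwall argument on the resulting energy balance. First I would record the algebraic identities that tame the drift: because $\Pn$ restricted to $\rH$ is the orthogonal projection onto $H_n$, and is compatible with the duality $\dual{\cdot}{\cdot}{}$ by \cite[Lemmas 2.3--2.4]{Brz+Motyl_2013}, formula $\eqref{E:Acal_ilsk_Dir}$ gives $\ilsk{\un(s)}{-\Pn\acal\un(s)}{\rH}=-\Norm{\un(s)}{}{2}$, while $\eqref{E:wirowosc_b}$ gives $\ilsk{\un(s)}{-\Pn\Bn(\un(s))}{\rH}=-b(\un(s),\un(s),\un(s))=0$, so the nonlinearity drops out entirely. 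For the It\^o correction I would use that, for $p\ge2$ and every $K\in\lhs(\rK,\rH)$,
\[
  \tfrac12\tr\bigl[F^{\prime\prime}(x)(K,K)\bigr]\le\tfrac{p(p-1)}{2}|x|_\rH^{p-2}\Norm{K}{\lhs(\rK,\rH)}{2},
\]
together with $\Norm{\Pn G(\un)}{\lhs(\rK,\rH)}{2}\le\Norm{G(\un)}{\lhs(\rK,\rH)}{2}\le(2-\eta)\Norm{\un}{}{2}+\lambda_0|\un|_\rH^2+\rho$, which is $\eqref{E:G}$.

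The one genuinely delicate point --- and the reason assumption \textbf{(H.3)} is imposed --- is that the dissipation $-p|\un|^{p-2}\Norm{\un}{}{2}$ produced by the Stokes term must dominate the worst part, $\tfrac{p(p-1)(2-\eta)}{2}|\un|^{p-2}\Norm{\un}{}{2}$, of the It\^o correction. Since $p-\tfrac{p(p-1)(2-\eta)}{2}=\tfrac p2\bigl(2-(p-1)(2-\eta)\bigr)$, this is strictly positive exactly when $\eta>\tfrac{2(p-2)}{p-1}$, which is equivalent to $p<2+\tfrac{\eta}{2-\eta}$ (and vacuous when $\eta=2$); hence a net term $-\delta|\un|^{p-2}\Norm{\un}{}{2}$ with $\delta=\delta(p,\eta)>0$ survives. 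The forcing term $p|\un|^{p-2}\dual{f}{\un}{}$ I would estimate by $\dual{f}{\un}{}\le|f|_{{\rV}^{\prime}}\norm{\un}{\rV}{}$ and Young's inequality, throwing a small multiple of $|\un|^{p-2}\Norm{\un}{}{2}$ into the $\delta$-term and leaving behind $C_p|\un|^p$ and $\eps^{-p/2}|f|_{{\rV}^{\prime}}^p$; the term $\rho|\un|^{p-2}$ is absorbed via $|\un|^{p-2}\le|\un|^p+1$. This yields the pathwise inequality (the Galerkin counterpart of $\eqref{E:apriori-2D}$)
\begin{equation*}
  |\un(t)|^p+\delta\int_0^t|\un(s)|^{p-2}\Norm{\un(s)}{}{2}\,ds
  \le|\Pn u_0|^p+K_p\int_0^t|\un(s)|^p\,ds+\tfrac{2\rho}{p}t+\eps^{-p/2}\int_0^t|f(s)|_{{\rV}^{\prime}}^p\,ds+\Mn(t),
\end{equation*}
with $\Mn(t)=p\int_0^t|\un(s)|^{p-2}\ilsk{\un(s)}{G(\un(s))\,dW(s)}{\rH}$ and $K_p=K_p(\lambda_0,\rho)=\tfrac{p-1}{2}[\lambda_0p+2+\rho(p-2)]$; crucially $\delta$ and $K_p$ do not depend on $n$.

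From here the three estimates of part (1) follow by standard steps, all finiteness and martingale issues being handled by the stopping times $\tau_N=\inf\{t:|\un(t)|_\rH\ge N\}$, which increase to $\infty$ since $\eqref{E:Galerkin}$ has a global solution in $H_n$. On $[0,T\wedge\tau_N]$ the process $\Mn$ is a true martingale, so taking expectations, using $|\Pn u_0|_\rH\le|u_0|_\rH$ and $\e\norm{u_0}{\rH}{p}\le R_1$, and applying Gronwall's lemma gives $\sup_n\sup_{t\le T}\e|\un(t\wedge\tau_N)|^p\le C$ uniformly in $N$; Fatou's lemma then yields $\sup_n\sup_{t\le T}\e|\un(t)|^p\le C_1(p)$, and reinserting this into the displayed inequality in expectation (with monotone convergence in $N$) gives $\eqref{E:HV_estimate_Galerkin}$. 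Knowing now $\e\int_0^T\Norm{G(\un(s))}{\lhs(\rK,\rH)}{2}\,ds<\infty$, $\Mn$ is a genuine martingale on $[0,T]$; the Burkholder--Davis--Gundy inequality applied to $\Mn$, whose quadratic-variation integrand is at most $p^2|\un|^{2(p-1)}\Norm{G(\un)}{\lhs(\rK,\rH)}{2}\le p^2\bigl(\sup_{s\le T}|\un(s)|^p\bigr)|\un|^{p-2}\bigl[(2-\eta)\Norm{\un}{}{2}+\lambda_0|\un|^2+\rho\bigr]$, followed by one more Young inequality, lets one absorb a term $\tfrac12\e\sup_{t\le T}|\un(t)|^p$ and conclude $\eqref{E:H_estimate_Galerkin_p}$. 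Finally $\eqref{E:V_estimate_Galerkin}$ is the special case of $\eqref{E:HV_estimate_Galerkin}$ with exponent $2$, which always satisfies \textbf{(H.3)}, once one notes that H\"older's inequality converts $\e\norm{u_0}{\rH}{p}\le R_1$ and $\norm{f}{{L}^{p}(0,T;{\rV}^{\prime})}{}\le R_2$ into bounds on $\e\norm{u_0}{\rH}{2}$ and $\norm{f}{{L}^{2}(0,T;{\rV}^{\prime})}{}$.

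For part (2) I would rerun the computation with $p=2$, so $F(x)=|x|_\rH^2$, $F^{\prime\prime}=2\,\id$, and the It\^o correction is exactly $\Norm{\Pn G(\un)}{\lhs(\rK,\rH)}{2}\le\Norm{G(\un)}{\lhs(\rK,\rH)}{2}$; now I would use $\eqref{E:G}$ with $\lambda_0=0$ and the Poincar\'e norm $\Norm{\un}{}{}=\norm{\nabla\un}{{L}^{2}}{}$ on $\rV$. The drift then satisfies
\[
  -2\Norm{\un}{}{2}+2\dual{f}{\un}{}+\Norm{G(\un)}{\lhs(\rK,\rH)}{2}
  \le-\eta\Norm{\un}{}{2}+2|f|_{{\rV}^{\prime}}\Norm{\un}{}{}+\rho
  \le-\tfrac{\eta}{2}\Norm{\un}{}{2}+\tfrac2\eta|f|_{{\rV}^{\prime}}^2+\rho,
\]
and taking expectations (again via the $\tau_N$-localisation, the stochastic term being a martingale by part (1)) and using $\e|\Pn u_0|_\rH^2\le\e\norm{u_0}{\rH}{2}$ gives exactly $\eqref{E:Poincare_Galerkin}$. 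The main obstacle in the whole proof is the coercivity inequality $\delta>0$ of part (1), i.e.\ the bookkeeping linking the admissible range of $p$ in \textbf{(H.3)} to the amount of gradient dependence allowed in the noise by $\eqref{E:G}$; everything else is routine It\^o calculus.
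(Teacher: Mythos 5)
Your proposal is correct and follows essentially the same route as the paper's proof: It\^o's formula for $|x|_\rH^p$, cancellation of the nonlinearity via \eqref{E:wirowosc_b}, the coercivity condition $\delta(p,\eta)>0$ being exactly equivalent to \textbf{(H.3)}, Young plus Gronwall for the moment bounds, the Burkholder--Davis--Gundy inequality for the supremum estimate, and the $p=2$ rerun with the Poincar\'e norm for part (2). The only cosmetic difference is that you localise the stochastic integral with the stopping times $\tau_N$, whereas the paper argues directly that it is a square-integrable martingale using the integrability already available for the finite-dimensional Galerkin solution; both arguments are valid.
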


\begin{proof}[Proof of Lemma A.\ref{L:Galerkin_estimates}]
Let us fix $p $ satisfying  condition \eqref{eqn-p_cond}. We apply the It\^{o} formula from \cite{Pardoux_1979} to the function $F$ defined by
\[F: \rH \ni x \mapsto  {|x|}_{\rH}^{p}\in \mathbb{R}.\]
In the sequel we will omit the subscript $\rH$ and write $|\cdot |:= {|\cdot |}_{\rH}$. Note that
\[
F^\prime(x)=d_xF= p \cdot {|x|}^{p-2} \cdot x  ,
\qquad \|  F^{\prime\prime}(x) \|= \|  d^2_xF \| \leq  p (p-1) \cdot {|x|}^{p-2} ,
\qquad x \in \rH.
\]
With the above comments in mind, we have, for $\, t \in [0,\infty)$,
\begin{eqnarray}
{|{\un } (t)|}^{p} -  {|{\un } (0)|}^{p}
  &=&  \int_0^{t} \Bigl[ p \,  {|{\un } (s)|}^{p-2} \dual{{\un } (s)}{- \acal {\un } (s)
       -  {B}_{n} \bigl( {\un } (s) \bigr) +  \Pn f(s)}{} \nonumber \\
 & &\qquad +    \frac{1}{2} \tr \;\bigl[  F^{\prime\prime}(\un (s))\bigl( \Pn G ({\un } (s)), \Pn G ({\un } (s)) \bigr)  \bigr] \Bigr] \, ds
\nonumber \\
&& \qquad  + p \, \int_0^{t}   {|{\un } (s)|}^{p-2} \dual{{\un } (s)}{ \Pn  G ( {\un } (s) ) \, d W(s) }{}
 \nonumber  \\
&=&
  \int_0^{t}  \Bigl[ - p \, {|{\un } (s)|}^{p-2} \Norm{{\un } (s)}{}{2}
   +  p \, {|{\un } (s)|}^{p-2} \dual{{\un } (s)}{ \Pn f(s)}{}
    \nonumber \\
& &\qquad  + \frac{1}{2} \tr \bigl[  F^{\prime\prime}(\un (s))\bigl( \Pn G ({\un } (s)), \Pn G ({\un } (s)) \bigr)  \bigr] \Bigr] \, ds  \nonumber  \\
 && \qquad +   p \, \int_0^{t}  {|{\un } (s)|}^{p-2} \dual{\un (s)}{\Pn G (\un (s)) \, d W(s)}{}  .
   \label{ineq-01_Galerkin}
\end{eqnarray}
Since
\[
\tr \,\bigl[  F^{\prime\prime}(u )\bigl( \Pn G (u ), \Pn G (u) \bigr)  \bigr]
\leq   p(p-1) \, {|u|}^{p-2} \cdot \norm{  G(u)}{\lhs (\rK,\rH)}{2}, \;\; u \in {\rV }_{},
\]
and by  \eqref{E:G}
\[
  \norm{ G(u)}{\lhs (\rK,\rH)}{2}   \le (2- \eta ) \, \norm{\nabla u}{}{2}
  + {\lambda }_{0} {|u|}^{2} + \rho , \;\; u  \in {\rV }_{},
\]
and since  by \eqref{E:norm_V} and the  Young  inequality with exponents $2,\frac{2p}{p-2}$ and $p$,
\begin{eqnarray*}
  \,  {|u |}^{p-2} \dual{f}{u }{}
 &\leq &   \,  {|u |}^{p-2} \Norm{u }{\rV}{}  \, {|f|}_{\rV^\prime }
 \ = \ {|u |}^{p-2}   {({|u |}^{2}+\norm{\nabla u }{}{2})}^{\frac{1}{2}}
   \, {|f|}_{\rV^\prime }
 \\
 &\leq &
 \frac{\eps}{2} ({|u |}^{2}+\norm{\nabla u }{}{2})\,  {|u |^{p-2}+(\frac12-\frac1p){|u |}^{p}+\frac{\eps^{-p/2}}{p}\, {|f|}^p_{\rV^\prime } }
\\
 &\leq &
  \frac{\eps}{2}  \norm{\nabla u }{}{2}\,  {|u |^{p-2}+(\frac{1+\eps}{2}-\frac1p){|u |}^{p}+\frac{\eps^{-p/2}}{p}\, {|f|}^p_{\rV^\prime } },
   \;\; u \in \rV,\, f\in \rV^\prime,
\end{eqnarray*}
we infer that
\begin{eqnarray*}
  {|\un (t)|}^{p}   &+& \bigl[ p - p \frac{\eps}{2} - \frac{1}{2} p(p-1) (2- \eta ) \,   \bigr]
  \,\int_{0}^{t} {|\un (s)|}^{p-2} \norm{\nabla \un (s)}{}{2} \, ds  \\
&    \le & |\un (0)|
 + \int_{0}^{t} \Bigl[ (\frac{p(1+\eps)}2-1){|\un (s)|}^{p}+\eps^{-p/2}\, {|f(s)|}^p_{\rV^\prime }
   + \frac{1}{2} p(p-1) \, {|\un (s)|}^{p-2} \cdot \bigl(
  {\lambda }_{0} {|\un (s)|}^{2} + \rho \bigr) \Bigr]  \, ds \\
 & &\hspace{+2truecm}\lefteqn{
  + p \, \int_{0}^{t} {|\un (s)|}^{p-2} \dual{\un (s)}{ \Pn G ( \un (s) ) \, d W(s) }{} }
\\
&=& \int_{0}^{t} \Bigl[ \Bigl(\frac{\lambda_{0}}{2}p(p-1)+\frac{p(1+\eps)}2-1\Bigr) \,
  {|\un (s)|}^{p} + \frac{\rho }{2} p(p-1)\, {|\un (s)|}^{p-2} +\eps^{-p/2}\, {|f(s)|}^p_{\rV^\prime } \Bigr]  \, ds
\\
&&\hspace{+2truecm}\lefteqn{  + p \, \int_{0}^{t} {|\un (s)|}^{p-2} \dual{\un (s)}{ \Pn G (\un (s) ) \, d W(s) }{} }
\end{eqnarray*}
Let us choose $\eps \in (0,1) $ such that  $ \delta=\delta(p,\eta):=p - p \frac{\eps}2 - \frac{1}{2} p(p-1) (2- \eta ) >0 $,
or equivalently,
\[
  \eps <  1 \wedge [ 2 -  (p-1) (2-\eta )] .
\]
Notice that under  condition \eqref{eqn-p_cond} such $\eps $ exists. Denote also
\[
K_p({\lambda }_{0},\rho ):= \frac{\lambda_{0}}{2}p(p-1)+p-1+\rho p(1-\frac2p)\frac{p-1}{2}
=\frac{p-1}{2} [{\lambda }_{0}p+2 +\rho (p-2)] .
\]
Thus, since by Young inequality $x^{p-2}\leq(1-\frac2p)x^p+\frac2p1^{p/2}$ for $x\geq 0$,   we obtain
\begin{equation}
\begin{array}{rcl}
  {|\un (t)|}^{p}   &+&\delta
  \, \int_{0}^{t}{|\un (s)|}^{p-2} \norm{\nabla \un (s)}{}{2} \, ds \\
  && \\
 & \le&    {|u (0)|}^{p} +K_p({\lambda }_{0},\rho)
\int_{0}^{t} \, {|\un (s)|}^{p} \, ds
  + \rho (p-1)  t \,
  + \eps^{-p/2} \int_{0}^{t} {|f(s)|}_{\rV^{\prime }}^{p}\, ds
  \\
  && \\
& +& p \,  \int_{0}^{t}{|\un (s)|}^{p-2} \dual{\un (s)}{\Pn G(\un (s)) \, d W(s) }{} ,
\quad t \in [0,\infty).
\end{array}     \label{E:apriori}
\end{equation}
Since $\un $ is the solutions of the Galerkin equation,  we infer that the process
\[
 {\mu }_{n}(t):= \int_{0}^{t} {|\un (s)|}^{p-2} \dual{\un (s)}{\Pn  G(\un (s)) \, d W(s) }{} ,
 \qquad t \in [0,\infty)
\]
is a square integrable martingale. Indeed, if we define a map
\[
g:\rV\ni u\mapsto \{ \rK\ni k \mapsto   \dual{u }{\Pn  G(u)k }{} \in \rH\} \in \lhs (\rK,\mathbb{R})
\]
then $\mu_n(t)=  \int_{0}^{t} {|\un (s)|}^{p-2} g(\un (s))dW(s)$ and hence, by  assumption
\eqref{E:G} and the fact that $\Pn $ is the orthogonal projection in $\rH$ we infer that for every $t\geq 0$,
\begin{eqnarray}
&&\int_{0}^{t } \Vert\, {|\un (s)|}^{p-2} g(\un (s))  \Vert^2_{\lhs (\rK,\mathbb{R})}\, ds=
\int_{0}^{t} {|\un (s)|}^{p-2} \Vert  \,g(\un (s))  \Vert^2_{\lhs (\rK,\mathbb{R})}\, ds
\\
\nonumber
&&\leq \int_{0}^{t } {|\un (s)|}^{p-2} {|\un (s)|}^{2} \Vert \Pn G(\un (s))  \Vert^2_{\lhs (\rK,{\rH})} \, ds
\leq \int_{0}^{t} {|\un (s)|}^{p} \big[ (2- \eta ) \, \norm{\nabla \un (t)}{}{2}
  + {\lambda }_{0} {|\un (t)|}^{2} + \rho \big] \, ds.
\end{eqnarray}
Hence by the fact that $\un $ is a Galerkin solution  we infer that
\[ \mathbb{E} \, \int_{0}^{t} \Vert\, {|\un (s)|}^{p-2} g(\un (s))  \,\Vert^2_{\lhs (\rK,\mathbb{R})}\, ds<\infty, \;\; t\geq 0.
\]
and thus we infer, as claimed,  that the process $ {\mu }_{n}$ is a square integrable  martingale. Hence,
  $\mathbb{E}[{\mu }_{n} (t) ] = 0 $. Let us now fix $T>0$.  By taking expectation in inequality \eqref{E:apriori} we infer that
\begin{equation}
\begin{array}{rcl}
  \mathbb{E}\bigl[ \, {|\un (t)|}^{p} \,\bigr]
  \, \, &\leq&   \, \,  {\mathbb{E} \bigl[ \, |u_0|}^{p}\, \bigr] +
K_p({\lambda }_{0},\rho)\int_{0}^{t} \,
 \mathbb{E}\,\bigl[ {|\un (s)|}^{p} \bigr] \, ds
+   \rho (p-1) t
  + \eps^{-p/2} \mathbb{E}\int_{0}^{t} {|f(s)|}_{{\rV }^{\prime }}^{p}\, ds
    \nonumber\\
 &&\nonumber\\
    &\leq &
  \mathbb{E} \bigl[ \, {|u_0|}^{p} \, ]  +
K_p({\lambda }_{0},\rho)\int_{0}^{t} \,
 \mathbb{E}\,\bigl[ {|\un (s)|}^{p} \bigr] \, ds   + \rho (p-1) T
 + \eps^{-p/2} \mathbb{E}\int_{0}^{T} {|f(s)|}_{{\rV }^{\prime }}^{p}\, ds
 ,   \quad t \in [0,T].
  \end{array}   \label{E:apriori'}
\end{equation}
Hence by the Gronwall Lemma there exists a constant
${\tilde{C}}_{p}={\tilde{C}}_p(T,\eta ,{\lambda }_{0},\rho , \mathbb{E}[ |u_0 |^{p}],  \Norm{f}{{L}^{p}(0,T;{\rV }^{\prime })}{} )
={\tilde{C}}_p(T,\eta ,{\lambda }_{0},\rho ,{R}_{1},{R}_{2} )>0$  such that
\begin{equation*}
  \mathbb{E}\,\bigl[ {|\un (t )|}^{p} \bigr]  \le {\tilde{C}}_{p} , \qquad  \, t \in [0,T],
  \quad n \in \nat ,
\end{equation*}
i.e.
\begin{equation} \label{E:app_H_est}
 \sup_{n \in \nat } \sup_{t\in [0,T]} \mathbb{E}\,\bigl[ {|\un (t )|}^{p} \bigr]  \le {\tilde{C}}_{p} .
\end{equation}
Using this bound in \eqref{E:apriori} we also obtain
\begin{equation} \label{E:app_HV_est}
\sup_{n \in \nat }  \mathbb{E}\,\biggl[ \int_{0}^{T}{|\un (s)|}^{p-2} \norm{\nabla \un (s)}{}{2} \, ds  \biggr]
  \le {\tilde{C}}_{2}(p)
\end{equation}
for a new constant ${\tilde{C}}_{2}(p) ={C}_{2}(p,T,\eta ,{\lambda }_{0},\rho , \mathbb{E}[ |u_0 |^{p}],  \Norm{f}{{L}^{p}(0,T;{\rV }^{\prime })}{})
={\tilde{C}}_{2}(p,T,\eta ,{\lambda }_{0},\rho , {R}_{1},{R}_{2})$.
This completes the proof of estimates \eqref{E:HV_estimate_Galerkin}.
Since $\mathbb{E} [{|{u}_{0}|}^{2}] \le {(\mathbb{E} [{|{u}_{0}|}^{p}] )}^{\frac{2}{p}} \le {R}_{1}^{2/p}$,
we infer that \eqref{E:V_estimate_Galerkin} holds with another constant ${C}_{2}(p)$.

Let us move to the proof of estimate \eqref{E:H_estimate_Galerkin_p}.
By the Burkholder-Davis-Gundy inequality, see \cite{DaPrato+Zabczyk_1996}, the Schwarz inequality and inequality \eqref{E:G}, there exists a constant ${c}_{p}$ such that  for any $t\geq 0$,
\begin{eqnarray}
 & & \mathbb{E}\,\biggl[ \sup_{0 \le s \le t}
 \biggl| \int_{0}^{s} p \, {|\un (\sigma )|}^{p-2} \dual{\un (\sigma )}{\Pn G ( \un (\sigma ) ) \, d W(\sigma ) }{}
  \biggr| \biggr] \nonumber \\
& &\le {c}_{p} \cdot \mathbb{E}\,\biggl[
 {\biggl( \int_{0}^{t} \, { |\un (\sigma )|}^{2p-2} \cdot
 \norm{ \Pn G ( \un (\sigma ) ) }{\lhs (\rK,\rH)}{2}
   \, d\sigma
  \biggr) }^{\frac{1}{2}} \biggr] \nonumber \\
& &\le {c}_{p} \cdot \mathbb{E}\,\biggl[ \sup_{0\le \sigma \le t } {|\un (\sigma )|}^{\frac{p}{2}}
 {\biggl( \int_{0}^{t} \,  { |\un (\sigma )|}^{p-2} \cdot
 \norm{  G ( \un (\sigma ) ) }{\lhs (\rK,\rH)}{2}
   \, d\sigma
  \biggr) }^{\frac{1}{2}} \biggr] \nonumber  \\
 &&  \le \frac{1}{2} \mathbb{E}\,\bigl[ \sup_{0\le s \le t } {|\un (s )|}^{p} \bigr]
   + \frac{1}{2}{c}_{p}^{2} \, \int_{0}^{t} \,  { |\un (\sigma )|}^{p-2} \cdot
 \norm{  G ( \un (\sigma ) ) }{\lhs (\rK,\rH)}{2}
   \, d\sigma
 \nonumber  \\
&&  \le \frac{1}{2} \mathbb{E}\,\bigl[ \sup_{0\le s \le t } {|\un (s )|}^{p} \bigr]
   + \frac{1}{2}{c}_{p}^{2} \, \int_{0}^{t} \,  { |\un (\sigma )|}^{p-2} \cdot
 \bigl[ (2-\eta )\norm{\un (\sigma )}{}{2} + {\lambda }_{0} \,  {|\un (\sigma )|}^{2} + \rho \bigr]
\, d\sigma \nonumber \\
 & &\le \frac{1}{2} \mathbb{E}\,\bigl[ \sup_{0\le s \le t } {|\un (s )|}^{p} \bigr]
  + \frac{1}{2}{c}_{p}^{2} \frac{2\rho }{p} t
 + \frac{1}{2}{c}_{p}^{2} (2-\eta ) \mathbb{E}\,\biggl[ \int_{0}^{t} { |\un (\sigma )|}^{p} \Norm{\un (\sigma )}{}{2}\, d \sigma \biggr]
\nonumber \\
&& \qquad +  \frac{1}{2} {c}_{p}^{2} \biggl( {\lambda }_{0} + \rho \Bigl( 1-\frac{2}{p}\Bigr) \biggr) \,   \cdot
\mathbb{E}\,\biggl[ \int_{0}^{t} \, { |\un (\sigma )|}^{p} \, d\sigma \biggr] .
 \label{E:BDG_ineq_Galerkin}
\end{eqnarray}
Using \eqref{E:BDG_ineq_Galerkin}  in \eqref{E:apriori},
by inequalities \eqref{E:app_H_est} and \eqref{E:app_HV_est}  we infer that
\begin{eqnarray*}
 \mathbb{E}\,\bigl[ \sup_{0 \le s \le t }{|\un (s)|}^{p} \bigr]
   &\leq &   \mathbb{E} [ \, {|u_0 |}^{p} \, ] +
\biggl[ K_p({\lambda }_{0},\rho)
 + \frac{1}{2} {c}_{p}^{2} \biggl( {\lambda }_{0} + \rho \Bigl( 1-\frac{2}{p}\Bigr) \biggr) \biggr]
\int_{0}^{t} \,
 \mathbb{E}\,\bigl[ {|\un (s)|}^{p} \bigr] \, ds \\
&+& \biggl( \frac{2\rho}{p} +{c}_{p}^{2} \frac{\rho }{p} \biggr) \, t
  + \eps^{-p/2} \,\int_{0}^{t} {|f(s)|}_{{\rV }^{\prime }}^{p}\, ds
 \\
& +&  \frac{1}{2} \mathbb{E}\,\bigl[ \sup_{0\le s \le t } {|\un (s )|}^{p} \bigr]
+ \frac{1}{2}{c}_{p}^{2} (2-\eta ) \mathbb{E}\,\biggl[ \int_{0}^{t} { |\un (\sigma )|}^{p} \Norm{\un (\sigma )}{}{2}\, d \sigma \biggr] \\
&\leq &   \mathbb{E} [ \, {|u_0 |}^{p} \, ] +
\biggl[ K_p({\lambda }_{0},\rho)
 + \frac{1}{2} {c}_{p}^{2} \biggl( {\lambda }_{0} + \rho \Bigl( 1-\frac{2}{p}\Bigr) \biggr) \biggr]
  {\tilde{C}}_{p} t \\
&+& \frac{\rho }{p} ( 2 +{c}_{p}^{2} ) \, t
  + \eps^{-p/2} \,\int_{0}^{t} {|f(s)|}_{{\rV }^{\prime }}^{p}\, ds
 \\
& +&  \frac{1}{2} \mathbb{E}\,\bigl[ \sup_{0\le s \le t } {|\un (s )|}^{p} \bigr]
+ \frac{1}{2}{c}_{p}^{2} (2-\eta ) {C}_{2}(p)
, \quad t\geq 0.
\end{eqnarray*}
Thus for a fixed $T>0$
\begin{eqnarray*}
 \mathbb{E}\,\bigl[ \sup_{0 \le s \le T }{|\un (s)|}^{p} \bigr]
   &\leq & {C}_{1}(p)  ,
\end{eqnarray*}
where
\begin{eqnarray*}
 {C}_{1}(p)  &=& {C}_{1}(p,T,\eta ,{\lambda }_{0},\rho ,{R}_{1}, {R}_{2})  \\
&:=& 2 {R}_{1} +
2\biggl[ K_p({\lambda }_{0},\rho)
 + \frac{1}{2} {c}_{p}^{2} \biggl( {\lambda }_{0} + \rho \Bigl( 1-\frac{2}{p}\Bigr) \biggr) \biggr]
  {\tilde{C}}_{p} T  \\
&&  + 2\biggl( \frac{2\rho}{p} +{c}_{p}^{2} \frac{\rho }{p} \biggr) \, T
  + 2\eps^{-p/2} {R}_{2}
 +  {c}_{p}^{2} (2-\eta ) {C}_{2}(p).
\end{eqnarray*}
This completes the proof of estimate \eqref{E:H_estimate_Galerkin_p}.

To prove  inequality \eqref{E:Poincare_Galerkin}
let us assume that $\ocal $ is a Poincar\'{e} domain and inequality \eqref{E:G} holds with ${\lambda }_{0}=0$.  Recall that now in the space $\rV $ we consider the inner product $\dirilsk{\cdot }{\cdot }{}$ given by \eqref{E:il_sk_Dir}.
By identity \eqref{ineq-01_Galerkin} from the previous proof with $p=2$,   we have
\begin{eqnarray}
\label{ineq-02_Galerkin}
  {|{\un } (t)|}^{2} &-&  {|{u} (0)|}^{2}
=
  \int_0^{t}  \Bigl\{ - 2 \,  \Norm{\un (s)}{}{2}
   + 2 \,  \dual{\un (s)}{ f(s)}{}
 + \frac{1}{2} \tr \bigl[  F^{\prime\prime}(\un (s))\bigl( G ({\un } (s)), G ({\un } (s)) \bigr)  \bigr] \Bigr\} \, ds \nonumber\\
 &+&  2 \, \int_{0}^{t}   \dual{\un (s)}{\Pn G (\un (s)) \, d W(s)}{}, \;\; t\geq 0  .
 \nonumber
\end{eqnarray}
Since $\mathbb{E}\big(\int_{0}^{t} \dual{\Pn G(\un (s))}{\un (s)\, dW(s)}{} \big)=0$, we infer that
\begin{eqnarray*}
\mathbb{E} |\un (t){|}_{\rH}^{2}
&  \leq &   \mathbb{E} [ \, {|u_0|}_{\rH}^{2} \, ] +
\mathbb{E}  \int_{0}^{t}  \bigl\{  -2  \Norm{\un  (s)}{}{2}  +2\dual{ f(s)  }{\un (s)}{}
 \bigr\} \, ds  + \mathbb{E} \int_{0}^{t}  \norm{ \Pn G(\un (s)) }{\lhs (\rK,\rH)}{2} \, ds
\end{eqnarray*}
Using assumption \eqref{E:G}
 with ${\lambda }_{0}=0$ (i.e $\norm{G(\un (s))}{\lhs (K,\rH)}{2} \le (2-\eta )\Norm{{\un } (s)}{}{2} +\varrho $) we get
\begin{eqnarray}
\mathbb{E} \norm{u(t)}{\rH}{2}
&  \leq & -\eta
\mathbb{E}  \int_{0}^{t}      \Norm{{\un } (s)}{}{2}   \, ds  + \mathbb{E} [ \,{|u_0|}_{\rH}^{2} \, ]
 + 2\, \mathbb{E} \int_{0}^{t} \dual{ f(s)  }{u(s)}{} \, ds +\varrho t.
  \label{ineq-apriori_2}
\end{eqnarray}
Since
$2\dual{f(s)}{u(s)}{} \leq \frac{\eta}2 \norm{\nabla \un (s)}{}{2}+\frac{2}{\eta} \norm{f }{{\rV }^\prime}{2} $ we infer that
\begin{eqnarray}
\mathbb{E} \norm{\un (t)}{\rH}{2}
&  \leq & -\frac{\eta}2
\mathbb{E}  \int_{0}^{t}      \Norm{{\un } (s)}{}{2}   \, ds
   +\mathbb{E} [{|u_0|}_{\rH }^{2}] + \frac{2}{\eta} \int_{0}^{t}\norm{f(s) }{{\rV }^\prime}{2}
 +\varrho  t,\;\; t\geq 0.
  \label{ineq-apriori_3_Galerkin}
\end{eqnarray}
The proof of inequality \eqref{E:Poincare_Galerkin} is thus complete.
\end{proof}

\section{Proof of Theorem \ref{T:existence_NS}}  \label{sec:Proof of Theorem_existence_NS}

Similarly to the proof of Theorem 5.1 in  \cite{Brz+Motyl_2013} the present proof is based on the Galerkin method. We will use the fact the the laws of the Galerkin solutions form a tight set of probability measures on ${\mathcal{Z}}_{T}$. We will use the  Jakubowski's version of the Skorokhod theorem \ref{T:2_Jakubowski}, as well. However, some details are different.

Let us fix positive numbers $T$, ${R}_{1}$ and ${R}_{2}$.  Let us assume that  $\mu$ is a Borel probability measure on $\rH$,
${f} \in  {L}^{p}([0,\infty ); \rV^\prime )$ which satisfy  $\int_{\rH} \vert x\vert^p \mu(dx) \leq R_1$
 and $ \vert f{\vert}_{{L}^{p}(0,T;\rV^\prime)}\leq R_2$.
Similarly to the previous section  we choose and fix a stochastic basis and thus we assume that Assumption \ref{assumption-second} holds.
We also fix  an $\mathcal{F}_0$-measurable $\rH$-valued random variable whose law is equal to $\mu$.

As in the proof of    \cite[Theorem 5.1]{Brz+Motyl_2013} let ${({u}_{n})}_{n \in \nat }$ be a sequence of the solutions of the Galerkin equations. Then the set of  laws $\{ \mathcal{L}({u}_{n}, n \in \nat \} $ is tight on the space $({\mathcal{Z}}_{T},\sigma ({\mathcal{T}}_{T}))$, where $\sigma ({\mathcal{T}}_{T})$ denotes the topological $\sigma $-field.
By  theorem \ref{T:2_Jakubowski} there exists a subsequence ${({n}_{k})}$, a probability space $(\tOmega , \tfcal , \tilde{\p } )$ and, on this space ${\mathcal{Z}}_{T}$-valued random variables $u $, $\tunk $, $k \in \nat $,
and a sequence of $\rK$-valued Wiener processes $\tilde{W }, {\tilde{W}}_{{n}_{k}} $, $k \in \nat $
 such that
 \begin{equation}
 \label{eqn-equal laws_Galerkin}
\mbox{ the variables $(\unk ,W_{})$
and $(\tunk ,{\tilde{W}}_{{n}_{k}})$ have
the same laws on the Borel $\sigma$-algebra
$\mathcal{B}\big(\zcal_{T} \times \mathcal{C} ([0,T],\rK )\big)$
}
\end{equation}
and
\begin{equation}
  \mbox{$(\tunk ,{\tilde{W}}_{{n}_{k}}) $ converges to $(u, \tilde{W}) $ in $\zcal_{T} \times \mathcal{C}([0,T];\rK )$ almost surely on $\tOmega $.}
\end{equation}
In particular,
\begin{equation}
 \label{eqn-conv-as_Galerkin}
\mbox{$\tunk $ converges to $u $ in $\zcal_{T}$ almost surely on $\tOmega $.}
\end{equation}
We will denote the subsequence $(\tunk ,{\tilde{W}}_{{n}_{k}})$ again by $(\tun , {\tilde{W}}_{n})$.
Define a corresponding  sequence of  filtrations by
\begin{equation}\label{eqn-new filtration_Galerkin}
{\tilde{\mathbb{F}}}_{n} =({\tilde{\fcal }}_{n,t})_{t\geq 0}, \mbox{ where }
{\tilde{\fcal }}_{n,t} = \sigma \{ (\tun (s),{\tilde{W }}_{n}(s)), \, \, s \le t \},\;\;  t \in [0,T].
\end{equation}
To obtain \eqref{E:H_estimate_NS}, we modify the proof from
\cite{Brz+Motyl_2013} at pages 1650-51. Namely, using Lemma A.\ref{L:Galerkin_estimates}, we infer that the processes $\tun $, $n \in \nat $,  satisfy the following inequalities
\begin{equation} \label{E:H_estimate_Galerkin'_p}
\sup_{n \in \nat } \tilde{\mathbb{E}} \bigl( \sup_{s\in [0,T] } {|\tun (s)|}_{\rH}^{p} \bigr) \le {C}_{1}(p)
\end{equation}
and
\begin{equation} \label{E:V_estimate_Galerkin'}
\sup_{n \in \nat } \tilde{\mathbb{E}} \bigl[ \int_{0}^{T} \norm{ \nabla \tun (s)}{{L}^{2}}{2} \, ds \bigr] \le {C}_{2}(p).
\end{equation}
Let us emphasize that the constants  ${C}_{1}(p)$ and ${C}_{2}(p)$, being the same as in  Lemma A.\ref{L:Galerkin_estimates}, depend on $T$, ${R}_{1}$ and ${R}_{2}$.
Using inequality  \eqref{E:H_estimate_Galerkin'_p} we choose a subsequence, still denoted by $({\tilde{u}}_{n})$, convergent weak star in the space $ {L}^{p}(\tilde{\Omega };{L}^{\infty }(0,T;\rH))$ and infer that
\begin{equation} \label{E:H_estimate_NS'_p}
  \e \bigl[ \sup_{s\in [0,T]} \norm{u(s)}{\rH}{p} \bigr] \le {C}_{1}(p)
\end{equation}
and that the limit process $u$ satisfies  \eqref{E:H_estimate_NS'_p}, as well.
This completes the proof of inequality \eqref{E:H_estimate_NS_p}.
To prove \eqref{E:H_estimate_NS} let us fix $q \in [1,p)$. Notice that for every $t \in [0,T]$
\[
    {|u(t)|}^{q} =  {({|u(t)|}^{p} )}^{q/p}
    \le {\Bigl( \sup_{t\in [0,T]} {|u(t)|}^{p} \Bigr) }^{q/p}  .
\]
Thus, $ \sup_{t\in [0,T]}  {|u(t)|}^{q}
    \le {\Bigl( \sup_{t\in [0,T]} {|u(t)|}^{p} \Bigr) }^{q/p}$,
and so by the H\"{o}lder inequality
\begin{eqnarray*}
 \mathbb{E} \Bigl[ \sup_{t\in [0,T]}  {|u(t)|}^{q}  \Bigr]
 \le \mathbb{E} \Bigl[ {\Bigl( \sup_{t\in [0,T]} {|u(t)|}^{p} \Bigr) }^{q/p}  \Bigr]
 \le {\biggl( \mathbb{E} \Bigl[  \sup_{t\in [0,T]} {|u(t)|}^{p}   \Bigr]  \biggr) }^{q/p}
 \le  {\bigl( {C}_{1}(p) \bigr) }^{q/p},
\end{eqnarray*}
which means that inequality \eqref{E:H_estimate_NS} holds with the constant ${C}_{1}(p,q):={\bigl( {C}_{1}(p) \bigr) }^{q/p}$.

By inequality \eqref{E:V_estimate_Galerkin'} we infer that the sequence $({\tilde{u}}_{n})$ contains further subsequence, denoted again by $({\tilde{u}}_{n})$, convergent weakly in the space ${L}^{2}([0,T]\times \tilde{\Omega };\rV )$ to $u$. Moreover, it is clear that
\begin{equation} \label{E:V_estimate_NS'}
 \tilde{\mathbb{E}} \bigl[ \int_{0}^{T} \norm{ \nabla u (s)}{{L}^{2}}{2} \, ds \bigr] \le {C}_{2}(p)
\end{equation}
and the process $u$ satisfies \eqref{E:V_estimate_NS}.

To prove the second part of the theorem we  assume that $\ocal $ is a Poincar\'{e} domain and inequality \eqref{E:G} holds with ${\lambda }_{0}=0$.
In this case, by Lemma A.\ref{L:Galerkin_estimates},  instead of inequality \eqref{E:V_estimate_Galerkin'} we can use the following one corresponding to the uniform estimates \eqref{E:Poincare_Galerkin},
\begin{equation}  \label{E:Poincare_Galerkin'}
\frac{\eta }{2} \sup_{n \in \nat }    \e \biggl[ \int_{0}^{T} \norm{\nabla \tun (s)}{{L}^{2}}{2} \, ds \biggr]
   \le \mathbb{E} [\, \norm{{u}_{0}}{\rH}{2} \, ]
 + \frac{2}{\eta } \int_{0}^{T} \norm{f(s)}{{\rv }^{\prime }}{2} \, ds + \rho T,
\end{equation}
choose a subseqence convergent weakly in the space ${L}^{2}([0,T]\times \tilde{\Omega };\rV )$ to $u$ and infer that the limit process satisfies the same estimate, which proves estimate \eqref{E:Poincare-NS-ineq}.
We will prove that the system $(\tOmega ,\tilde{\mathcal{F}}, \tilde{\mathbb{F}}, \tilde{\p },u)$ is a martingale solution  of problem \eqref{E:NS}.

\bigskip  \noindent
\bf Step 1. \rm
Let us   fix $\varphi \in U $. Analogously to \cite{Brzezniak_Hausenblas_2010} and \cite{EM_2014}, let us denote
\begin{eqnarray}
 & & {\Lambda }_{n}({\tilde{u}}_{n},{\tilde{W}}_{n}, \varphi) (t)
:=  \ilsk{{\tilde{u}}_{n}(0)}{\varphi}{\rH }
  - \int_{0}^{t} \dual{{P}_{n}\acal {\tilde{u}}_{n}(s)}{\varphi}{}  ds
- \int_{0}^{t} \dual{{B}_{n}({\tilde{u}}_{n}(s))}{\varphi}{}  ds  \nonumber \\
& & + \int_{0}^{t} \dual{ {f}_{n}(s)}{\varphi}{}\, ds
  + \Dual{\int_{0}^{t} {P}_{n}G({\tilde{u}}_{n}(s))\,  d{\tilde{W}}_{n}(s)}{\varphi}{} , \quad t \in [0,T] ,
\label{E:Lambda_n_un_Galerkin}
\end{eqnarray}
and
\begin{align}
 &  \Lambda  (u, \tilde{W}, \varphi) (t)
:=  \ilsk{u(0)}{\varphi}{\rH}
    - \int_{0}^{t} \dual{ \acal u(s)}{\varphi}{}  ds
 - \int_{0}^{t} \dual{ B(u(s))}{\varphi}{}  ds  \nonumber \\
 & + \int_{0}^{t} \dual{ f(s)}{\varphi}{}\, ds
    +  \Dual{\int_{0}^{t}G(u(s))\,  d\tilde{W}(s)}{\varphi}{}  ,
\quad t \in [0,T] .
\label{E:Lambda_n_NS}
\end{align}

\noindent
Using Lemma 2.4(c) from \cite{Brz+Motyl_2013}, see also \cite[Lemma 5.4]{EM_2014},  we can prove the following  lemma analogous to Lemma  \ref{L:convergence_existence}.

\begin{lemmaB}  \label{L:convergence_existence_Galerkin}
For all $\varphi \in U$
\begin{itemize}
\item[(a)] $\lim_{n\to \infty } \tilde{\e } \bigl[ \int_{0}^{T} {|\ilsk{{\tilde{u}}_{n}(t)-u(t)}{\varphi}{\rH }|}^{2} \, dt \bigr] =0 $,
\item[(b)] $\lim_{n\to \infty } \tilde{\e } \bigl[ {|\ilsk{{\tilde{u}}_{n}(0)-u(0)}{\varphi}{\rH}|}^{2}  \bigr] =0 $,
\item[(c)] $\lim_{n\to \infty } \tilde{\e } \bigl[ \int_{0}^{T}
 \bigl| \int_{0}^{t}\dual{ {P}_{n}\acal {\tilde{u}}_{n}(s)-\acal u(s)}{\varphi}{} \,ds \bigr| \, dt \bigr] =0 $,
\item[(d)] $\lim_{n\to \infty } \tilde{\e } \bigl[ \int_{0}^{T}
 \bigl| \int_{0}^{t}\dual{ {B}_{n} ({\tilde{u}}_{n}(s))- B(u(s))}{\varphi}{} \,ds \bigr| \, dt \bigr] =0 $,
\item[(e)] $\lim_{n\to \infty } \tilde{\e } \bigl[ \int_{0}^{T}
 \bigl| \int_{0}^{t}\dual{ {P}_{n}{f}_{n}(s)-f(s)}{\varphi}{} \,ds \bigr| \, dt \bigr] =0 $,
\item[(f)] $\lim_{n\to \infty } \tilde{\e } \bigl[ \int_{0}^{T}
 \bigl| \dual{ \int_{0}^{t} [ {P}_{n}G({\tilde{u}}_{n}(s)) - G(u(s))] \, d \tilde{W}(s) }{\varphi}{}
 {\bigr| }^{2} \, dt \bigr] =0 $.
\end{itemize}
\end{lemmaB}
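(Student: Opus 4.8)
The plan is to mimic, term by term, the proof of Lemma~\ref{L:convergence_existence}, the only genuinely new ingredient being the removal of the Galerkin projections $\Pn$ by means of Lemma 2.4(c) of \cite{Brz+Motyl_2013} (see also \cite[Lemma~5.4]{EM_2014}). Fix $\varphi\in U$; since $U\embed{\rV}_{s}\embed\rV$ for $s>\frac{d}{2}+1$, $\varphi$ is an admissible test element in every term of \eqref{E:Lambda_n_un_Galerkin}. Recall from \cite[Lemmas~2.3 and~2.4]{Brz+Motyl_2013} that $\Pn$ is the $\rH$-orthogonal projection onto $H_{n}$, that it extends to bounded operators on $\rV$ and on ${\rV }^{\prime }$ (still denoted $\Pn$) which are adjoint to one another for the pairing $\dual{\cdot}{\cdot}{}$, and that $\Pn\varphi\to\varphi$ strongly in ${\rV}_{s}$, hence in $\rV$, with $\sup_{n}\norm{\Pn\varphi}{{\rV}_{s}}{}<\infty$. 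Accordingly, in each of the terms (c)--(f) I would first push $\Pn$ onto $\varphi$ --- e.g.\ $\dual{\Pn\acal\tun(s)}{\varphi}{}=\dirilsk{\tun(s)}{\Pn\varphi}{}$ by \eqref{E:Acal_ilsk_Dir}, $\dual{\Bn(\tun(s))}{\varphi}{}=\dual{B(\tun(s),\tun(s))}{\Pn\varphi}{}$, and similarly for the $f$- and $G$-terms --- and then peel off the error caused by replacing $\Pn\varphi$ by $\varphi$, which is uniformly small thanks to the a priori bounds \eqref{E:H_estimate_Galerkin'_p} and \eqref{E:V_estimate_Galerkin'}. Once the projections are gone, the remaining arguments reproduce those of Lemma~\ref{L:convergence_existence}.

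For (a) and (b): the almost sure convergence $\tun\to u$ in $\ccal([0,T];{\rH}_{w})$ from \eqref{eqn-conv-as_Galerkin} gives $\ilsk{\tun(t)}{\varphi}{\rH}\to\ilsk{u(t)}{\varphi}{\rH}$ for every $t\in[0,T]$, $\tp$-a.s.; since also $\sup_{t}|\tun(t)|_{\rH}<\infty$ $\tp$-a.s., the pathwise dominated convergence theorem yields $\int_{0}^{T}|\ilsk{\tun(t)-u(t)}{\varphi}{\rH}|^{2}\,dt\to0$ $\tp$-a.s., and the uniform moment bound \eqref{E:H_estimate_Galerkin'_p} with $p>2$ then upgrades this to $L^{1}(\tOmega)$ convergence by the Vitali theorem (for (b) one additionally uses the weak continuity of $u$ at $t=0$). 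For (c) and (e), whose integrands are linear, respectively affine, in $\tun$ and $f$, I would split $\dirilsk{\tun(s)}{\Pn\varphi}{}-\dirilsk{u(s)}{\varphi}{}=\dirilsk{\tun(s)}{\Pn\varphi-\varphi}{}+\dirilsk{\tun(s)-u(s)}{\varphi}{}$, bound the first summand by $\Norm{\tun(s)}{}{}\,\Norm{\Pn\varphi-\varphi}{}{}$ and integrate (using \eqref{E:V_estimate_Galerkin'}), while the second tends to $0$ along a.e.\ path by the weak $L^{2}_{w}(0,T;\rV)$-convergence of $\tun$ tested against the bounded functional $v\mapsto\int_{0}^{t}\dirilsk{v(s)}{\varphi}{}\,ds$; a final Vitali step, again using \eqref{E:V_estimate_Galerkin'}, closes it. Assertion (e) is even simpler since the data are fixed: $\dual{\Pn f(s)}{\varphi}{}=\dual{f(s)}{\Pn\varphi}{}\to\dual{f(s)}{\varphi}{}$ pointwise in $s$, dominated by $\norm{f(s)}{{\rV }^{\prime }}{}\,\sup_{n}\norm{\Pn\varphi}{\rV}{}\in L^{p}(0,T)$.

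The substance of the proof, exactly as in Lemma~\ref{L:convergence_existence}, lies in (d) and (f). For (d) I would invoke \cite[Lemma~B.1]{Brz+Motyl_2013}: the almost sure convergence of $\tun$ in ${L}^{2}(0,T;{\rH}_{\mathrm{loc}})$ from \eqref{eqn-conv-as_Galerkin}, together with its $L^{2}(0,T;\rH)$-boundedness from \eqref{E:V_estimate_Galerkin'} and \eqref{E:norm_V}, gives $\int_{0}^{t}\dual{B(\tun(s))-B(u(s))}{\varphi}{}\,ds\to0$ $\tp$-a.s.; the gap between $\dual{\Bn(\tun(s))}{\varphi}{}$ and $\dual{B(\tun(s))}{\varphi}{}$ is $\dual{B(\tun(s),\tun(s))}{\Pn\varphi-\varphi}{}$, bounded by $C\,|\tun(s)|_{\rH}^{2}\,\norm{\Pn\varphi-\varphi}{{\rV}_{s}}{}$ via Lemma~\ref{lem-B-H}, hence negligible in $L^{1}(0,T)$ by \eqref{E:H_estimate_Galerkin'_p}; a Vitali argument, which needs only moments of $|\tun(s)|_{\rH}$ of some order strictly below $p$, transfers everything to $L^{1}(\tOmega)$. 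For (f), pushing $\Pn$ onto $\varphi$ turns the relevant Hilbert--Schmidt norm into that of $(\Pn\varphi)^{\ast\ast}G(\tun)$; the almost sure ${L}^{2}(0,T;{\rH}_{\mathrm{loc}})$-convergence of $\tun$ and hypothesis \eqref{E:G**} yield $\int_{0}^{t}\|\dual{G(\tun(s))-G(u(s))}{\varphi}{}\|_{\lhs(\hat{\rK};\rzecz)}^{2}\,ds\to0$ $\tp$-a.s. (the error $(\Pn\varphi-\varphi)^{\ast\ast}G(\tun(s))$ having Hilbert--Schmidt norm $\le\norm{G(\tun(s))}{\lhs(\rK,{\rV }^{\prime })}{}\,\norm{\Pn\varphi-\varphi}{\rV}{}$, controlled by \eqref{E:G*}), the It\^o isometry converts this into convergence of the stochastic integral, and one more Vitali step on $\tOmega$, fed again by the $p$-th moment bound, finishes the proof. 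The delicate point throughout --- as already in Lemma~\ref{lem-apriori estimates} and Theorem~\ref{thm-continuous dependence-existence} --- is that only $L^{p}$ moments with $p$ possibly close to $2$ are available, so every uniform integrability estimate must be set up to require moments of order strictly less than $p$; checking that this is always possible under condition \eqref{eqn-p_cond} is the real obstacle, rather than any single one of the six assertions.
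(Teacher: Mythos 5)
Your proposal is correct and follows essentially the same route as the paper, which proves this lemma by declaring it analogous to Lemma \ref{L:convergence_existence} and invoking Lemma 2.4(c) of \cite{Brz+Motyl_2013} to remove the projections $\Pn$; you simply spell out the term-by-term details (pushing $\Pn$ onto $\varphi$, controlling $\Pn\varphi-\varphi$ via the a priori bounds, then repeating the Vitali arguments) that the paper leaves implicit.
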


 \noindent
Directly from  Lemma \ref{L:convergence_existence_Galerkin} we get the following corollary

\begin{corB}
For every $\varphi \in U$,
\begin{equation} \label{E:un_convergence_Galerkin}
 \lim_{n \to \infty } \norm{\ilsk{{\tilde{u}}_{n}(\cdot )}{\varphi}{\rH }
 - \ilsk{u(\cdot )}{\varphi}{\rH } }{{L}^{2}([0,T]\times \tilde{\Omega })}{} =0
\end{equation}
and
\begin{equation} \label{E:Lambda_n_un_convergence_Galerkin}
     \lim_{n \to \infty } \norm{  {\Lambda  }_{n}({\tilde{u}}_{n}, {\tilde{W}}_{n},\varphi)
 - \Lambda  (u,\tilde{W}, \varphi) }{{L}^{1}([0,T]\times \tilde{\Omega })}{} =0 .
\end{equation}
\end{corB}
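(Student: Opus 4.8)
The statement, Lemma~B.\ref{L:convergence_existence_Galerkin}'s corollary, is the exact Galerkin counterpart of Corollary~\ref{C:convergence_existence}, and the plan is to mimic that proof verbatim, simply invoking Lemma~B.\ref{L:convergence_existence_Galerkin} in place of Lemma~\ref{L:convergence_existence}. Fix $\varphi\in U$. For the first assertion \eqref{E:un_convergence_Galerkin}, I would apply the Tonelli theorem to rewrite the square of the norm over the product measure space as an iterated integral,
\[
\norm{\ilsk{{\tilde{u}}_{n}(\cdot )}{\varphi}{\rH }-\ilsk{u(\cdot )}{\varphi}{\rH }}{{L}^{2}([0,T]\times \tOmega )}{2}
= \tilde{\e }\Bigl[\int_{0}^{T}{|\ilsk{{\tilde{u}}_{n}(t)-u(t)}{\varphi}{\rH }|}^{2}\,dt\Bigr],
\]
and then the right-hand side tends to zero by Lemma~B.\ref{L:convergence_existence_Galerkin}(a). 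This is immediate.

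For the second assertion \eqref{E:Lambda_n_un_convergence_Galerkin}, again by Tonelli,
\[
\norm{{\Lambda }_{n}({\tilde{u}}_{n},{\tilde{W}}_{n},\varphi)-\Lambda (u,\tilde{W},\varphi)}{{L}^{1}([0,T]\times \tOmega )}{}
=\int_{0}^{T}\tilde{\e }\bigl[{|{\Lambda }_{n}({\tilde{u}}_{n},{\tilde{W}}_{n},\varphi)(t)-\Lambda (u,\tilde{W},\varphi)(t)|}\bigr]\,dt .
\]
Then I would estimate the integrand using the triangle inequality, splitting $\Lambda_n-\Lambda$ according to the five summands appearing in the definitions \eqref{E:Lambda_n_un_Galerkin} and \eqref{E:Lambda_n_NS} (initial value, $\acal$-term, $B$-term, forcing term, stochastic integral). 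Each of the five resulting differences converges to zero in ${L}^{1}([0,T]\times \tOmega )$: the initial-value term by Lemma~B.\ref{L:convergence_existence_Galerkin}(b) (convergence in $L^2(\tOmega)$, hence in $L^1([0,T]\times\tOmega)$ since the measure is finite), the $\acal$-, $B$- and forcing terms directly by parts (c), (d), (e), and the stochastic-integral term by part (f) (convergence in $L^2([0,T]\times\tOmega)$, which again implies $L^1$ convergence on a finite measure space). Summing these bounds gives \eqref{E:Lambda_n_un_convergence_Galerkin}.

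There is essentially no genuine obstacle here: all the analytic content — the Vitali/dominated-convergence arguments together with the continuity assumptions \eqref{E:G**} and the estimates from Lemma~A.\ref{L:Galerkin_estimates} — has been packaged into Lemma~B.\ref{L:convergence_existence_Galerkin}. The only points deserving a word of care are (i) the use of the finiteness of the product measure $dt\otimes d\tilde{\p}$ on $[0,T]\times\tOmega$ to upgrade the $L^2$-type convergences in (b) and (f) to $L^1$-convergences, and (ii) checking that each term in \eqref{E:Lambda_n_un_Galerkin} is indeed jointly measurable in $(t,\tomega)$ so that Tonelli applies — this follows from the regularity of $\tilde u_n$ as a $\mathcal{Z}_T$-valued random variable and from Remark~\ref{R:Sol_cont_V'}-type arguments, exactly as in the proof of Corollary~\ref{C:convergence_existence}.
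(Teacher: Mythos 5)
Your proposal is correct and follows essentially the same route as the paper: Fubini--Tonelli to rewrite the norms as iterated integrals, assertion (a) of Lemma~B.1 for the first limit, and a term-by-term application of (b)--(f) for the second, with the $L^2$-type convergences in (b) and (f) downgraded to $L^1$ via the finiteness of $dt\otimes d\tilde{\p}$. The paper's own proof is exactly this argument, stated slightly more tersely.
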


\begin{proof}
Assertion  \eqref{E:un_convergence_Galerkin} follows from the  equality
\[
 \norm{\ilsk{{\tilde{u}}_{n}(\cdot )}{\varphi}{\rH }
 - \ilsk{\tilde{u}(\cdot )}{\varphi}{\rH } }{{L}^{2}([0,T]\times \tilde{\Omega })}{2} \\
=  \tilde{\e } \Bigl[ \int_{0}^{T}
{| \ilsk{{\tilde{u}}_{n}(t) -\tilde{u}(t)}{\varphi}{\rH } |}^{2} \, dt \Bigr]
\]
and Lemma \ref{L:convergence_existence}  (a).
To prove  \eqref{E:Lambda_n_un_convergence_Galerkin} let us note
that by the Fubini theorem, we have
\begin{align*}
&  \norm{  {\Lambda  }_{n}({\tilde{u}}_{n}, {\tilde{W}}_{n},\varphi)
 - \Lambda  (u,\tilde{W}, \varphi) }{{L}^{1}([0,T]\times \tilde{\Omega })}{} \\
& = \int_{0}^{T} \tilde{\e } \bigl[ {| {\Lambda  }_{n}({\tilde{u}}_{n}, {\tilde{W}}_{n},\varphi)(t)
 - \Lambda  (u, \tilde{W} ,\varphi)(t) |}^{}\, \bigr] dt .
 \end{align*}
To complete the proof of \eqref{E:Lambda_n_un_convergence_Galerkin} it is sufficient to note that
by Lemma \ref{L:convergence_existence_Galerkin} (b)-(f),  each  term on the right hand side of
\eqref{E:Lambda_n_un_Galerkin} tends at least in ${L}^{1}([0,T]$ $\times \tilde{\Omega })$ to the corresponding term in \eqref{E:Lambda_n_NS}.
\end{proof}
\noindent

\bigskip  \noindent
\bf Step 2. \rm Since ${u}_{n}$ is a solution of the Galerkin equation, for all $t\in [0,T]$ and $\varphi \in U $
\[
   \ilsk{{u}_{n}(t)}{\varphi}{\rH } = {\Lambda  }_{n} ({u}_{n},{W}_{},\varphi)(t) , \qquad \p \mbox{-a.s.}
\]
In particular,
\[
  \int_{0}^{T} \e \bigl[ {|  \ilsk{\un (t)}{\varphi}{\rH }
 - {\Lambda  }_{n} ({u}_{n},{W}_{},\varphi)(t) |}^{} \, \bigr] \, dt  =0.
\]
Since $\lcal ({u}_{n},{W}_{}) = \lcal ({\tilde{u}}_{n},{\tilde{W}}_{n})$,
using \eqref{E:un_convergence_Galerkin} and \eqref{E:Lambda_n_un_convergence_Galerkin} we infer that
\[
\int_{0}^{T} \tilde{\e } \bigl[ {|
\ilsk{u(t)}{\varphi}{\rH } - {\Lambda  }_{} (u,\tilde{W},\varphi)(t) |}^{} \, \bigr] \, dt  =0.
\]
Hence for $l$-almost all $t \in [0,T]$ and $\tilde{\p }$-almost all $\omega \in \tilde{\Omega }$
\begin{equation}
\ilsk{u(t)}{\varphi}{\rH } - {\Lambda  }_{} (u,\tilde{W},\varphi )(t)=0, \label{E:solution_NS}
\end{equation}
Since $u$ is ${\zcal }_{T}$-valued  random variable, in particular $u\in \mathcal{C} ([0,T];{\rH }_{w})$, i.e. $u$ is weakly continuous,
we infer that
equality \eqref{E:solution_NS} holds  for all $t\in [0,T]$ and all $\varphi \in U  $.
Since $U $ is dense in $\rV $,   equality \eqref{E:solution_NS} holds for all $\varphi \in \rV  $, as well.
Putting $\tilde{\mathfrak{A}}:=(\tilde{\Omega }, \tilde{\fcal },\tilde{\p }, \tilde{\fmath })$,
by \eqref{E:solution_NS}  and \eqref{E:Lambda_n_NS}
 we infer that the system
$(\tilde{\mathfrak{A}}, \tilde{W}, u)$ is a martingale solution of  equation \eqref{E:NS}.
The proof of Theorem \ref{T:existence_NS} is thus complete.

\bigskip

\section{Kuratowski Theorem}\label{sec:Kuratowski}

The following is the classical form of the celebrated Kuratowski Theorem.
\begin{theoremC}[Kuratowski Theorem]\label{thm:kuratowski}
  Assume that $X_1,X_2$ are two  Polish spaces  with their Borel $\sigma$-fields denoted respectively by $\mathcal{B}(X_1),\mathcal{B}(X_2)$. If $\phi:X_1\longrightarrow X_2$ is an injective Borel measurable map, then for any $E_1\in\mathcal{B}(X_1)$, $E_2:=\phi(E_1)\in\mathcal{B}(X_2)$.
\end{theoremC}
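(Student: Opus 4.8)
The plan is to deduce the theorem from the \emph{Lusin--Souslin theorem}: if $Y_1$ is a Polish space and $\psi\colon Y_1\to X_2$ is continuous and injective, then $\psi(Y_1)$ is Borel in $X_2$. To reduce our statement to this, fix a countable base $(V_j)_j$ of $X_2$; the sets $\phi^{-1}(V_j)$, together with $E_1$, form a countable family of Borel subsets of $X_1$, and by the standard change-of-topology lemma there is a Polish topology $\tau$ on $X_1$, finer than the original one and with the same Borel $\sigma$-field, in which all of these sets are clopen. Then $\phi\colon(X_1,\tau)\to X_2$ is continuous, $E_1$ is $\tau$-clopen, hence $(E_1,\tau)$ is itself a Polish space, and $\phi|_{E_1}$ is a continuous injection; applying Lusin--Souslin with $Y_1=(E_1,\tau)$ and $\psi=\phi|_{E_1}$ gives that $E_2=\phi(E_1)$ is Borel. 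So it suffices to prove the Lusin--Souslin statement.

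Thus let $X_1$ be a nonempty Polish space (the empty case being trivial) equipped with a fixed compatible complete metric, and let $\phi\colon X_1\to X_2$ be continuous and injective. I would construct recursively, indexed by finite sequences $s\in\mathbb{N}^{<\mathbb{N}}$, Borel sets $C_s\subseteq X_1$ and $D_s\subseteq X_2$ with: $C_\varnothing=X_1$, $D_\varnothing=X_2$; $C_s=\bigsqcup_n C_{s^\frown n}$, a partition into countably many Borel pieces; $\operatorname{diam}_{X_1}(C_s)\le 2^{-|s|}$ and $\operatorname{diam}_{X_2}(\phi(C_s))\le 2^{-|s|}$ for $|s|\ge 1$; $\phi(C_s)\subseteq D_s$; $D_{s^\frown n}\subseteq D_s$ for all $n$; $\operatorname{diam}_{X_2}(D_s)\to 0$ as $|s|\to\infty$; $D_s=\varnothing$ whenever $C_s=\varnothing$; and, crucially, the sets $(D_{s^\frown n})_n$ \emph{pairwise disjoint} for each fixed $s$. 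The Borel partitions with the two small-diameter properties exist because, by continuity of $\phi$, every point of $C_s$ has a neighbourhood whose intersection with $C_s$ has both small diameter and small $\phi$-image; a Lindel\"of subcover followed by disjointification yields the children. Given the $C_s$ and $D_s$, the children $D_{s^\frown n}$ are produced as follows: the sets $\phi(C_{s^\frown n})$, $n\in\mathbb{N}$, are pairwise disjoint analytic subsets of $X_2$ (continuous images of Borel sets), so by the generalised Lusin separation theorem there are pairwise disjoint Borel sets $B_n\supseteq\phi(C_{s^\frown n})$; setting $D_{s^\frown n}:=B_n\cap D_s\cap\overline{B(y_n,2^{-|s|-1})}$ with $y_n\in\phi(C_{s^\frown n})$ (and $D_{s^\frown n}:=\varnothing$ if $C_{s^\frown n}=\varnothing$) keeps $\phi(C_{s^\frown n})\subseteq D_{s^\frown n}\subseteq D_s$, preserves the pairwise disjointness, and forces the diameters to shrink.

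Granting such a scheme, I claim $\phi(X_1)=\bigcap_{k\ge1}\bigcup_{|s|=k}D_s$, which is Borel. For the inclusion $\subseteq$: if $x\in X_1$ and $x|k$ denotes the unique length-$k$ sequence with $x\in C_{x|k}$, then $\phi(x)\in\phi(C_{x|k})\subseteq D_{x|k}$ for every $k$. For $\supseteq$: let $y$ belong to the right-hand side. Since siblings $D_{s^\frown n}$ are disjoint and $D_{s^\frown n}\subseteq D_s$, for each $k$ there is exactly one $s_k$ of length $k$ with $y\in D_{s_k}$, and the length-$k$ initial segment of $s_{k+1}$ must equal $s_k$; these glue to a single $x\in\mathbb{N}^{\mathbb{N}}$ with $y\in D_{x|k}$ for all $k$. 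As $D_{x|k}\ne\varnothing$ we get $C_{x|k}\ne\varnothing$, so picking $x_k\in C_{x|k}$ gives a Cauchy sequence (the $C_{x|k}$ are nested with diameters $\to 0$) converging to some $x_\ast\in\bigcap_k\overline{C_{x|k}}$; by continuity $\phi(x_\ast)\in\bigcap_k\overline{\phi(C_{x|k})}\subseteq\bigcap_k\overline{D_{x|k}}$, and since $y\in\bigcap_k\overline{D_{x|k}}$ while $\operatorname{diam}(D_{x|k})\to 0$, we conclude $y=\phi(x_\ast)\in\phi(X_1)$.

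The one genuinely delicate point is the simultaneous recursive construction of the schemes $(C_s)$ and $(D_s)$ satisfying all the listed requirements at once --- above all, obtaining the pairwise disjointness of the $D_{s^\frown n}$ from the analytic separation theorem while still controlling every diameter. Everything else --- the change-of-topology lemma, the existence of diameter-controlled Borel partitions of a Polish space, the fact that a continuous image of a Borel subset of a Polish space is analytic, and the generalised Lusin separation theorem --- is classical and can simply be quoted, e.g. from Kechris, \emph{Classical Descriptive Set Theory}, Chapters~13 and~15. I expect no conceptual obstacle beyond this bookkeeping.
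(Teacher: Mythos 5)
Your argument is correct, and it is essentially the standard textbook proof (Lusin--Souslin via a Lusin scheme plus the generalised analytic separation theorem, as in Kechris, Chapters 13--15). The paper itself offers no proof of Theorem C.1: it is quoted as a classical fact with a reference to the literature, so there is no alternative argument in the paper to compare yours against. For what it is worth, the steps you flag as delicate do go through as you describe: the diameter-controlled Borel partitions $C_{s^\frown n}$ of $C_s$ come from continuity of $\phi$ on $(X_1,\tau)$, a Lindel\"of subcover and disjointification; the sets $\phi(C_{s^\frown n})$ are pairwise disjoint analytic sets by injectivity, so the iterated Lusin separation theorem yields pairwise disjoint Borel envelopes $B_n$, and intersecting with $D_s$ and the closed ball $\overline{B(y_n,2^{-|s|-1})}$ preserves $\phi(C_{s^\frown n})\subseteq D_{s^\frown n}$ precisely because $\operatorname{diam}\phi(C_{s^\frown n})\le 2^{-|s|-1}$; the disjointness of siblings propagates to disjointness of all sets at a given level, which is what makes the branch $s_k$ unique and coherent in the reverse inclusion. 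The only points worth stating explicitly in a written-up version are (i) that the change of topology leaves the Borel structure, hence the hypotheses on $\phi$ and $E_1$, untouched, and that a $\tau$-clopen subset of a Polish space is Polish, and (ii) that completeness of the metric on $X_1$ (not on $X_2$) is what produces the limit point $x_\ast$. With those remarks the proof is complete.
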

Let us formulate a simple corollary to the above result.

\begin{propC} \label{prop:Kuratowski}
Suppose that $X_1,X_2$ are two  topological spaces  with their Borel $\sigma$-fields denoted respectively by $\mathcal{B}(X_1),\mathcal{B}(X_2)$. Suppose that  $\phi:X_1\longrightarrow X_2$ is an injective Borel measurable map such that  for any $E_1\in\mathcal{B}(X_1)$, $E_2:=\phi(E_1)\in\mathcal{B}(X_2)$.
Then if  $g:X_1\to \mathbb{R}$ is a Borel measurable map then   a function $f:X_2\to \mathbb{R}$  defined by
\begin{equation}\label{def-function f}
  f(x_2)=\begin{cases}
  g(\phi^{-1}(x_2)), &\mbox{ if } x_2\in \phi(X_1),\\
  \infty, & \mbox{if } x_2\in X_2\setminus \phi(X_1),
  \end{cases}
  \end{equation}
is also  Borel measurable.
\end{propC}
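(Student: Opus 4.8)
The statement to prove is Proposition C.2 (the \texttt{propC} environment), a simple corollary of the Kuratowski Theorem: if $\phi:X_1\to X_2$ is an injective Borel measurable map that transports Borel sets to Borel sets, and $g:X_1\to\mathbb{R}$ is Borel measurable, then the function $f:X_2\to\mathbb{R}$ defined by $f(x_2)=g(\phi^{-1}(x_2))$ on $\phi(X_1)$ and $f(x_2)=\infty$ off $\phi(X_1)$ is Borel measurable.

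\begin{proof}
The plan is to check measurability of $f$ by examining the preimage of an arbitrary Borel set $A\subset\mathbb{R}$ (or, equivalently, of the one-point-compactified range, since $f$ is $\mathbb{R}\cup\{\infty\}$-valued). First I would observe that, applying the hypothesis on $\phi$ to the set $E_1=X_1\in\mathcal{B}(X_1)$, the range $R:=\phi(X_1)$ is a Borel subset of $X_2$; hence $X_2\setminus R\in\mathcal{B}(X_2)$ as well. Next, since $\phi$ is injective, it has a well-defined inverse $\phi^{-1}:R\to X_1$, and the key point is that $\phi^{-1}$ is Borel measurable as a map from $R$ (equipped with its trace Borel $\sigma$-field $\mathcal{B}(R)=\{B\cap R: B\in\mathcal{B}(X_2)\}$) to $X_1$: indeed, for any $E_1\in\mathcal{B}(X_1)$ we have $(\phi^{-1})^{-1}(E_1)=\phi(E_1)$, which belongs to $\mathcal{B}(X_2)$ by hypothesis and is contained in $R$, hence lies in $\mathcal{B}(R)$.

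With these two facts in hand the verification is routine. Fix a Borel set $A\subset\mathbb{R}$. Then
\[
f^{-1}(A)=\bigl(f^{-1}(A)\cap R\bigr)\cup\bigl(f^{-1}(A)\cap(X_2\setminus R)\bigr).
\]
On $R$ we have $f=g\circ\phi^{-1}$, so $f^{-1}(A)\cap R=(\phi^{-1})^{-1}(g^{-1}(A))$; since $g$ is Borel measurable, $g^{-1}(A)\in\mathcal{B}(X_1)$, and since $\phi^{-1}$ is Borel measurable on $R$ this set lies in $\mathcal{B}(R)\subset\mathcal{B}(X_2)$. On $X_2\setminus R$ the function $f$ is constantly equal to $\infty\notin\mathbb{R}$, so $f^{-1}(A)\cap(X_2\setminus R)=\emptyset$ for every $A\subset\mathbb{R}$ (and equals $X_2\setminus R\in\mathcal{B}(X_2)$ if one instead tests a Borel set of $\mathbb{R}\cup\{\infty\}$ containing $\infty$). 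In either case $f^{-1}(A)$ is a union of two Borel subsets of $X_2$, hence Borel. This shows $f$ is Borel measurable.
\end{proof}

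There is no real obstacle here; the only point requiring a moment's care is the measurability of $\phi^{-1}$ on the (Borel) range $R$, which follows immediately from the hypothesis $\phi(E_1)\in\mathcal{B}(X_2)$ rewritten as $(\phi^{-1})^{-1}(E_1)=\phi(E_1)$. I would keep the argument short and simply cite Theorem C.1 (the Kuratowski Theorem) only implicitly, since the Proposition already \emph{assumes} the ``Borel sets to Borel sets'' property rather than deriving it; the proof therefore does not even need Polishness of $X_1,X_2$.
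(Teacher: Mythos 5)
Your proof is correct and is essentially the paper's own argument: both rest on the identity $f^{-1}(A)=\phi\bigl(g^{-1}(A)\bigr)$ for Borel $A\subset\mathbb{R}$ (which you obtain via the measurability of $\phi^{-1}$ on the range $R=\phi(X_1)$, and the paper obtains directly from $g=f\circ\phi$), combined with the hypothesis that $\phi$ carries Borel sets to Borel sets. Your remark that Polishness is not needed and that the Kuratowski Theorem enters only through the assumed ``Borel to Borel'' property is accurate; the paper's proof nominally cites Theorem C.1 at that step but in substance uses exactly the same hypothesis.
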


\begin{proof} Note that $g=f \circ \phi$.
\[f^{-1}(A)=\phi[ g^{-1}(A)],\;\;\; A \subset \mathbb{R}.
\]
Thus, if $A \in\mathcal{B}(\mathbb{R})$, then
 by assumptions $g^{-1}(A)\in \mathcal{B}(X_1)$. Hence by Theorem C.\ref{thm:kuratowski} we infer that $\phi[ g^{-1}(A)]\in \mathcal{B}(X_2)$ and thus by the equality above, we infer that $f^{-1}(A)\in \mathcal{B}(X_2)$. The proof is complete.
\end{proof}

One may wonder  if the following a generalization of the above result to non Polish spaces is valid.

\begin{theoremC}\label{thm-kurat-new} Let $X_1$ and $X_2$ be a topological spaces such that for each $i=1,2$ there exists a sequence $\{f_{i,m}\}$ of continuous functions $f_{i,m}:X_i\to\mathbb{R}$ that separate points of $X_i$.  Let us  denote by $\mathscr S_i$ the $\sigma$-algebra generated by the maps $\{f_{i,m}\}$. If $\phi:X_1\longrightarrow X_2$ is an injective  measurable map, then for any $E_1\in \mathscr S_1$, $E_2:=\phi(E_1)\in\mathscr S_2 $.
\end{theoremC}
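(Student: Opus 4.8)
The plan is to reduce the statement to the classical Kuratowski Theorem C.\ref{thm:kuratowski}. First I would encode each space into a fixed Polish space: set $F_i := (f_{i,1}, f_{i,2}, \dots) : X_i \to \mathbb{R}^{\mathbb{N}}$ for $i=1,2$. Since $\{f_{i,m}\}_m$ separates points, $F_i$ is injective; since $\mathcal{B}(\mathbb{R}^{\mathbb{N}})$ is generated by the coordinate projections, $\mathscr{S}_i = F_i^{-1}\bigl(\mathcal{B}(\mathbb{R}^{\mathbb{N}})\bigr)$, and hence $F_i$ is a measurable isomorphism of $(X_i, \mathscr{S}_i)$ onto $\bigl(Y_i, \mathcal{B}(\mathbb{R}^{\mathbb{N}})|_{Y_i}\bigr)$, where $Y_i := F_i(X_i)$. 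Transporting $\phi$ through these isomorphisms, the claim becomes: if $\tilde\phi := F_2 \circ \phi \circ F_1^{-1} : Y_1 \to Y_2$ is injective and Borel measurable for the relative $\sigma$-algebras, then $\tilde\phi$ maps relatively Borel subsets of $Y_1$ onto relatively Borel subsets of $Y_2$. One would then want to invoke Theorem C.\ref{thm:kuratowski}, along the lines of the proof of Proposition C.\ref{prop:Kuratowski}.

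The main obstacle — and, I expect, a fatal one — is that Theorem C.\ref{thm:kuratowski} requires the \emph{domain} to be Polish (equivalently, standard Borel), i.e. it would force $Y_1 = F_1(X_1)$ to be a Borel subset of $\mathbb{R}^{\mathbb{N}}$. But the hypotheses give no control whatsoever on $Y_i = F_i(X_i)$; the image of an injective Borel map can be an arbitrary (e.g. analytic non-Borel) set, so the reduction does not bring us into the range of the classical theorem, and I see no way around this.

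In fact the statement as written is false, and I would make the gap explicit with a counterexample (this is the content of Counterexample C.\ref{counterexample-Kuratowski}). Choose an analytic, non-Borel set $X_1 \subseteq \mathbb{R}$ and equip it with the subspace topology; the inclusion $\iota : X_1 \hookrightarrow \mathbb{R}$ is continuous and, being injective, separates points, so $\mathscr{S}_1$ is exactly the relative Borel $\sigma$-algebra of $X_1$ in $\mathbb{R}$. Take $X_2 := \mathbb{R}$ with its usual topology, so $\mathscr{S}_2 = \mathcal{B}(\mathbb{R})$, and put $\phi := \iota$, which is injective and $\mathscr{S}_1/\mathscr{S}_2$-measurable. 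Then $E_1 := X_1 \in \mathscr{S}_1$ trivially, while $\phi(E_1) = X_1 \notin \mathcal{B}(\mathbb{R}) = \mathscr{S}_2$. Thus Theorem C.\ref{thm-kurat-new} cannot hold without an extra assumption — the natural one being that $(X_1, \mathscr{S}_1)$ is standard Borel, which is precisely how Theorem C.\ref{thm:kuratowski} enters the proof of Lemma \ref{lem-Kuratowski} via the genuinely Polish space $\ccal([0,T]; U^{\prime}) \cap L^2(0,T; {\rH}_{\mathrm{loc}})$.
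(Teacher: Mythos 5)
Your proposal is correct and matches the paper exactly: the paper does not prove Theorem C.\ref{thm-kurat-new} at all, but poses it only as a question ("one may wonder if\dots") and immediately refutes it with Counterexample C.\ref{counterexample-Kuratowski}, which is essentially your construction — a non-Borel subset $X_1$ of the line with the subspace topology, $X_2$ the line (the paper uses $[0,1)$ with trigonometric functions as the separating family), and $\phi$ the inclusion, so that $\phi(X_1)=X_1\notin\mathscr S_2$. Your diagnosis of why the reduction to Theorem C.\ref{thm:kuratowski} fails (no control on the image $F_1(X_1)$ in $\mathbb{R}^{\mathbb{N}}$) is also precisely the point, and it is consistent with how the paper actually uses Kuratowski's theorem in Lemma \ref{lem-Kuratowski}, namely only on the genuinely Polish space $\ccal([0,T];U^{\prime})\cap L^{2}(0,T;{\rH}_{\mathrm{loc}})$.
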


The following Counterexample shows that the answer to the above question is No.

\begin{counterexampleC} \label{counterexample-Kuratowski}

\begin{trivlist}
\item[ \, 1)] Define $f_k(x)=e^{2ikx\pi}$, $x\in [0,1)$, for every integer $k$ (trigonometric functions).

\item[2)] Let $X_1$ be a non-Borel subset of $[0,1)$ equipped with the euclidean metric.

\item[3)] Let $X_2$ denote $[0,1)$ with the Euclidean metric.

\item[4)] Denote by $f^1_k$ the restriction of $f_k$ to $X_1$.

\item[5)] Then $f^1_k$ are continuous and separate points in $X_1$.

\item[6)] Then $f_k$ are continuous and separate points in $X_2$.

\item[7)] $\sigma(f_k)=Borel(X_2)$ by Stone-Weierstrass.

\item[8)] $\sigma(f^1_k)=\{A\cap X_1:A\in\sigma(f_k)\}=\{A\cap X_1:A\in Borel (X_2)\}=Borel(X_1)$.

\item[9)] Let $\varphi:X_1\to X_2$ be the identity mapping.

\item[10)] $\varphi$ is a continuous injection.

\item[11)] $\varphi[X_1]$ is not Borel in $X_2$.
\end{trivlist}

\end{counterexampleC}

\end{document}